\documentclass[11pt]{article}

\usepackage{amsthm,amsmath,amsfonts,amssymb,bm}

\usepackage[utf8]{inputenc}

\usepackage[table,dvipsnames]{xcolor}
\usepackage{hhline}
\usepackage{caption}
\usepackage{enumerate}
\usepackage{hyperref}
\usepackage{bbm}
\RequirePackage{booktabs}

\RequirePackage[numbers]{natbib}

\usepackage[margin=3cm]{geometry}
\parindent 0in
\parskip 12pt
\geometry{margin=0.8in, headsep=0.25in}

\usepackage{graphicx}

\setlength{\parindent}{0.0in}
\setlength{\parskip}{0.05in}

\newcommand{\reff}[1]{(\ref{#1})}

\newcommand{\IW}{\mathbb{W}}

\newcommand{\IL}{\mathbb{L}}

\newcommand{\sH}{\mathcal{H}}
\newcommand{\sG}{\mathcal{G}}

\newcommand{\sP}{\mathcal{P}}
\newcommand{\sU}{\mathcal{U}}
\newcommand{\sV}{\mathcal{V}}
\newcommand{\sW}{\mathcal{W}}

\newcommand{\sM}{\mathcal{M}}

\newcommand{\IR}{\mathbb{R}}
\newcommand{\IC}{\mathbb{C}}
\newcommand{\IZ}{\mathbb{Z}}
\newcommand{\IN}{\mathbb{N}}
\newcommand{\IE}{\mathbb{E}}
\newcommand{\IP}{\mathbb{P}}

\newcommand{\Ii}{\mathbbm{1}}

\newcommand{\Var}{\mathbb V\mathrm{ar}}


\DeclareMathOperator*{\argmin}{\arg\min}

\newtheorem{thm}{Theorem}[section]
\newtheorem{lem}[thm]{Lemma}
\newtheorem{cor}[thm]{Corollary}
\newtheorem{prop}[thm]{Proposition}

\theoremstyle{definition}

\newtheorem{rem}[thm]{Remark}

\newtheorem{exs}[thm]{Examples}

\numberwithin{equation}{section}


\begin{document}

\title{Notes on Time Series}

\thispagestyle{empty}

\begin{center}
{\LARGE \bf Multiplicative deconvolution in survival analysis \\ under dependency}\\
\vspace{2ex}

{\large Sergio Brenner Miguel and Nathawut Phandoidaen}\\

\url{brennermiguel@math.uni-heidelberg.de}, \url{ phandoidaen@math.uni-heidelberg.de}\\

{\small Institut für angewandte Mathematik, Im Neuenheimer Feld 205, Universität Heidelberg}\\
\today
\end{center}

\begin{abstract}
 We study the non-parametric estimation of an unknown survival function $S$ with support on $\IR_+$ based on a sample with multiplicative measurement errors. 
  The proposed fully-data driven procedure is based on the estimation of the Mellin
  transform of the survival function $S$ and a regularisation of the inverse of
  the Mellin transform by a spectral cut-off. The upcoming bias-variance trade-off  is dealt with by a
  data-driven choice of the cut-off parameter. In order to discuss the bias term, we consider the
  \textit{Mellin-Sobolev spaces} which characterize the
  regularity of the unknown survival function $S$ through the decay of its Mellin
  transform. For the analysis of the variance term, we consider the i.i.d. case and incorporate dependent observations in form of Bernoulli shift processes and beta mixing sequences. Additionally,  we show minimax-optimality over \textit{Mellin-Sobolev spaces} of the spectral cut-off estimator. 
\end{abstract}

{\footnotesize
\begin{tabbing} 
\noindent \emph{Keywords:} \=  Survival analysis,
Non-parametric statistics, Multiplicative measurement errors, \\
Functional dependence measure, Adaptivity, Minimax theory \\
\noindent\emph{MSC 2010 subject classifications:}  Primary 62G05; secondary 62N02 , 62C20. 
\end{tabbing}}

\section{Introduction}
\label{sec_intro}
In this work we are interested in estimating the unknown survival function
 $S:\IR_+ \rightarrow \IR_+$ of
 a positive random variable $X$ given identically
 distributed copies of $Y=XU$ where $X$ and $U$ are
 independent of each other and $U$ has a known density $g:\IR_+
 \rightarrow \IR_+$. In this setting the density  $f_{Y}:\IR_+
 \rightarrow \IR_+$ of $Y$ is given by
 \begin{equation*}
 f_{Y}(y)=[f * g](y):= \int_{0}^{\infty} f(x)g(y/x) x^{- 1}d x\quad\forall  y\in\IR_+.
 \end{equation*}
 Here ``$*$" denotes multiplicative convolution. The estimation of
 $S$ using a sample $Y_1, \dots, Y_n$ from $f_{Y}$ is thus an
 inverse problem called
 multiplicative deconvolution.  We will allow for certain dependency structures on the sample $Y_1, \dots, Y_n$. More precisely, we assume that $X_1, \dots, X_n$ is a stationary process while the error terms $U_1,\dots, U_n$ will be independent and identically distributed (i.i.d.).\\
 As far as the multiplicative deconvolution model is concerned, the recent work of \cite{Brenner-MiguelComteJohannes2020} should be mentioned, which uses the Mellin transform to construct a density estimator under multiplicative measurement errors, and \cite{miguel2021anisotropic}, where they consider the multivariate case of density estimation. The model of multiplicative measurement errors was motivated in the work of \cite{BelomestnyGoldenshluger2020} as a generalization of several models, for instance the multiplicative censoring model or the stochastic volatility model. \\
 To the knowledge of the authors, the estimation for the survival function of a positive random variable for general multiplicative measurement errors has not been studied yet.\\
 The investigations done in \cite{Vardi1989} and \cite{VardiZhang1992} introduce and study
 \textit{multiplicative censoring} intensively, which corresponds to
 the particular multiplicative deconvolution problem with uniformly distributed multiplicative error $U$ on $[0,1]$. This
 model is often applied in survival analysis and was
 motivated in \cite{Van-EsKlaassenOudshoorn2000}. The estimation of
 the cumulative distribution function of $X$ is discussed in
 \cite{AsgharianWolfson2005} and
 \cite{VardiZhang1992}. Series expansion methods are
 discussed in \cite{AndersenHansen2001} treating the model as an
 inverse problem. The survival function estimation in a multiplicative
 censoring model is considered in \cite{BrunelComteGenon-Catalot2016}
 using a kernel estimator and a convolution power kernel
 estimator. Assuming an uniform error distribution on an interval
 $[1-\alpha, 1+\alpha]$ for $\alpha\in (0,1)$, \cite{ComteDion2016}
 analyzes a projection survival function estimator with respect to the Laguerre
 basis. On the other hand, \cite{BelomestnyComteGenon-Catalot2016} sheds light on the theory with
 beta-distributed error $U$.\\
 In the work of \cite{BelomestnyGoldenshluger2020}, the authors used the Mellin transform to construct a kernel estimator for the pointwise density estimation.  Moreover, they point out that the following widely used
 naive approach is a special case of their estimation
 strategy. Transforming the data by applying the logarithm the model
 $Y=XU$ writes as $\log(Y)=\log(X)+\log(U)$. In other words,
 multiplicative convolution becomes convolution for the
 $\log$-transformed data. As a consequence, the density of $\log(X)$
 is eventually estimated employing usual strategies for
 non-parametric deconvolution problems (cf. \cite{Meister2009}) and then transformed back to an estimator of $f$.
 However, it is difficult to interpret regularity conditions on the
 density of $\log(X)$. Furthermore, the analysis of a global risk of
 an estimator using this naive approach is challenging as
 \cite{ComteDion2016}
 pointed out. \\
 In this work, we extend the results of \cite{Brenner-MiguelComteJohannes2020} 
 for the estimation of the survival function.
 To do so, we introduce the Mellin transform for positive random variables and collect the necessary properties it.
 The key to the analysis of the multiplicative deconvolution problem is the multiplication
 theorem of the Mellin transform, which roughly states
 $\sM[f_Y]=\sM[f]\sM[g]$ for a density $f_Y=f
 *g$. Exploiting the
 multiplication theorem of the Mellin transform and applying a spectral cut-off regularisation of the inversion of the Mellin-transform we define a survival function estimator. We measure the accuracy of the estimator
 by introducing a global risk in terms of the
 $\IL^2$-norm. Making use of properties of the Mellin transform we
 characterize the underlying inverse problem and natural regularity
 conditions which borrow ideas from the inverse problems community
 (\cite{EnglHanke-BourgeoisNeubauer2000}).  The regularity conditions are expressed
 in the form of \textit{Mellin-Sobolev spaces} and their relations to
 the analytical properties of the survival function $S$ are discussed in more
 details. The proposed estimator, however, involves a tuning
 parameter which is selected by a data-driven method. We establish an
 oracle inequality for the fully-data driven spectral cut-off
 estimator under fairly mild assumptions on the error density
 $g$. 
 Moreover we show that uniformly over \textit{Mellin-Sobolev spaces} the proposed
 data-driven estimator is minimax-optimal. Precisely, we state both an upper
 bound for the mean  integrated squared error of the fully-data driven spectral
 cut-off  estimator and a general lower bound for estimating the
 density $f$  based on copies from $f_{ Y}=f *g$.\\
 
 Besides the discussion of i.i.d. samples, we also examine the estimator's behavior when certain dependency structures are present, similar to \cite{ComteDedeckerTaupin2008} who considered density estimation for general ARCH models, by using the log-transformed data approach.  We will briefly introduce the two central concepts of dependency we base our work on. A classical structure in literature is given by absolutely regular mixing coefficients which are also called $\beta$-mixing coefficients.
 
\paragraph{Absolutely regular mixing ($\beta$-mixing)}
First, let us consider two sigma fields $\sU, \sV$ over some probability space $\Omega$ and define the quantity 
\[
    \beta^{\textit{mix}}(\sU,\sV) := \frac{1}{2}\sup \sum_{(p,q) \in P \times Q}|\IP(U_p \cap V_q) - \IP(U_p) \IP(V_q)|
\] where the supremum is taken over all finite partitions $(U_p)_{p\in P}$, $(V_q)_{q\in Q}$ of $\Omega$ such that $(U_p)_{p\in P} \subset \sU$, $(V_q)_{q\in Q} \subseteq \sV$. Now, let $(\beta(k))_{k \in \IN_0}$ be a sequence of real valued numbers defined by
\begin{equation}
    \beta(k) := \beta(X_0,X_k) := \beta^{\textit{mix}}(\sigma(X_0), \sigma(X_k)),\label{definition_beta_mixing}
\end{equation} for $\sigma$-fields generated by $X_0$ and $X_k$, respectively.
Then, the process is said to be $\beta$-mixing if for the corresponding coefficients $\beta(k) \to 0$ as $k \to \infty$.
Graphically, $\beta(k)$, $k\in\IN_0$, measures the dependence between $\sigma(X_0)$ and $\sigma(X_k)$ and decays to $0$ for $k \to \infty$ if $\sigma(X_0)$ contains no information about $X_k$ for large $k$. We refer to \cite[Section 1.3]{rio2013} or \cite{rio1995invariance} for a more detailed introduction. There are several results available which state that many linear processes of current interest, such as GARCH or ARMA processes, have absolutely summable $\beta(k)$, cf. \cite{bradley2005}, \cite{fryzlewicz2011} or \cite{doukhan_mixingbook}.

\paragraph{Functional dependence measure}
In recent years it has become popular to study dependency behavior by the so called functional dependence measure (cf. \cite{wu2005anotherlook}). In this context it is assumed that the given process has a representation as a Bernoulli shift process. More precisely, we let $X_j$, $j = 1,...,n$, be a one dimensional process of the form
\begin{equation}
    X_j = J_{j,n}(\sG_j),\label{def:bernoulli-shift}
\end{equation}
where $\sG_j = \sigma(\varepsilon_j,\varepsilon_{j-1},...)$ is the sigma-algebra generated by $\varepsilon_j$, $j \in\IZ$, a sequence of i.i.d. random variables in $\IR$, and some measurable function $J_{j,n}:(\IR)^{\IN_0}\to \IR$, $j=1,...,n$, $n\in\IN$. For a real-valued random variable $W$ and some $\nu > 0$, we define $\|W\|_{\nu} := \IE[|W|^{\nu}]^{1/\nu}$. 
If $\varepsilon_k^{*}$ is an independent copy of $\varepsilon_k$, independent of $\varepsilon_j, j\in\IZ$, we define $\sG_j^{*(j-k)} := (\varepsilon_j,...,\varepsilon_{j-k+1},\varepsilon_{j-k}^{*},\varepsilon_{j-k-1},...)$ and $X_j^{*(j-k)} := J_{j,n}(\sG_{j}^{*(j-k)})$. Then, the functional dependence measure of $X_j$ is given by
\begin{equation}
    \delta_{\nu}^{X}(k) = \big\|X_{j} - X_{j}^{*(j-k)}\big\|_\nu.\label{def:functional_dependence_measure}
\end{equation}
Graphically, $\delta_{\nu}^{X}$ measures the impact of $\varepsilon_0$ in $X_k$ with respect to the appropriate norm. Although representation \reff{def:bernoulli-shift} appears to be rather restrictive, it does cover a large variety of  processes. In \cite{borkar1993} it was motivated that the set of all processes of the form $X_j = J_{j,n}(\varepsilon_j,\varepsilon_{j-1},...)$ should be equal to the set of all stationary and ergodic processes.

The merit of this approach can be highlighted by the ease of computation for a broad range of processes, in contrast to mixing coefficients.

So, we can summarize the novel contributions in survival function estimation with multiplicative errors via Mellin transforms by providing in the setting of i.i.d. and dependent observations,
\begin{itemize}
    \item an upper bound for the spectral cut-off estimator's global risk and the rates in the Mellin-Sobolev space (cf. Theorem \ref{thm:upp_bound} and Corollary)
    \item minimax-optimality specifically for i.i.d. data (cf. Theorem \ref{theorem:lower_bound}) as well as
    \item an upper bound and rates when a data driven method is applied for selecting an appropriate tuning parameter (cf. Theorem \ref{thm:upper_bound_adap} and Corollary).
\end{itemize}

Our discourse is organized as follows. In Section \ref{sec_minimax} we revise the Mellin transform, give examples and summarize its main properties. Thereafter, we establish our estimation strategy for the survival function. We provide oracle inequalities for independent and dependent data and distill with respect to the MISE an upper bound with parametric as well as non-parametric rates in an appropriate Mellin-Sobolev space.
We conclude the section by deriving the estimator's minimax optimality in the i.i.d. case. Since our theory depends on an spectral cut-off parameter, we propose in Section \ref{sec_datadriven} a data-driven method based on a penalized contrast approach to select a suitable tuning parameter. As before, we state an oracle inequality and derive an upper bound, accordingly. To illustrate our results, we showcase numerical studies in Section \ref{sec_numerical}. The proofs can be found in the Appendix, i.e. Section \ref{sec_append}.

\section{Minimax}
\label{sec_minimax}

In this section we introduce the Mellin transform and collect its relevant properties for our theory.  
We define for any weight function
$\omega:\IR \rightarrow \IR_+$ the corresponding
weighted norm  by $\|h\|_{\omega}^2 := \int_0^{\infty}
|h(x)|^2\omega(x)dx $ for a measurable, complex-valued function $h$. Denote by
$\IL^2(\IR_+,\omega)$ the set of all measurable, complex-valued functions with
finite $\|\, .\,\|_{\omega}$-norm and by $\langle h_1, h_2
\rangle_{\omega} := \int_0^{\infty}  h_1(x)
\overline{h_2}(x)\omega(x)dx$ for $h_1, h_2\in \IL^2(\IR_+,\omega)$
the corresponding weighted scalar product. Similarly, define $\IL^2(\IR):=\{ h:\IR \rightarrow \IC\, \text{ measurable }: \|h\|_{\IR}^2:= \int_{-\infty}^{\infty} h(t)\overline{h(t)} dt <\infty \}$ and $\IL^1(\Omega,\omega):=\{h: \Omega \rightarrow \IC: \|h\|_{\IL^1(\Omega,\omega)}:= \int_{\Omega} |h(x)|\omega(x)dx < \infty \}.$

In the upcoming theory, we need to ensure that the survival function $S$ of the sample $X_1, \dots, X_n$ is square-integrable. Furthermore, to define the estimator, we additionally need the square integrablility of the empirical survival function $\widehat S_X$ which is defined by
\begin{align}\label{eq:emp:surv:x}
    \widehat S_X(x):= n^{-1} \sum_{j=1}^n \Ii_{(0,X_i)}(x)
\end{align}
for any $x\in \IR_+$. The following proposition shows that we can derive the square-integrablilty condition for both functions by a moment condition.

\begin{prop}\label{prop:surv}
 Let $\IE(X^{1/2})< \infty$. Then, $S \in \IL^1(\IR_+,x^{-1/2})\cap \IL^2(\IR, x^{0})$. If additionally $\IE(X_1)<\infty$ then $\widehat S_X \in \IL^1(\IR_+,x^{-1/2})\cap \IL^2(\IR, x^{0})$ almost surely.
\end{prop}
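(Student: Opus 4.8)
The plan is to reduce both integrability assertions to the single elementary tail--integral (layer--cake) identity
\[
\IE(X^p) = \int_0^\infty p\, x^{p-1} S(x)\, dx, \qquad p>0,
\]
which holds for the nonnegative variable $X$ by Tonelli's theorem applied to $\IE\int_0^X p x^{p-1}\,dx = \IE\int_0^\infty p x^{p-1}\Ii_{(0,X)}(x)\,dx$. Everything then comes down to choosing the exponent $p$ correctly, and, where a single moment does not quite suffice, to exploiting that $S$ is itself a bounded, nonincreasing factor.

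First I would treat $S$. Taking $p=1/2$ in the identity settles the first membership at once, since
\[
\|S\|_{\IL^1(\IR_+,x^{-1/2})} = \int_0^\infty x^{-1/2} S(x)\, dx = 2\,\IE(X^{1/2}) < \infty.
\]
For the $\IL^2$ membership the naive route fails and this is the one genuinely non--automatic step: Markov's inequality only gives $S(x)=\IP(X^{1/2}>x^{1/2})\le x^{-1/2}\IE(X^{1/2})$, and squaring this bound produces an $x^{-1}$ tail whose integral over $\IR_+$ diverges. The fix is to square $S$ asymmetrically, keeping one surviving factor of $S$: writing $S(x)^2 \le \IE(X^{1/2})\, x^{-1/2} S(x)$ and integrating gives
\[
\int_0^\infty S(x)^2\, dx \le \IE(X^{1/2}) \int_0^\infty x^{-1/2} S(x)\, dx = 2\big(\IE(X^{1/2})\big)^2 < \infty,
\]
which reuses the $\IL^1$ bound just computed. (Equivalently, one may note $\int_0^\infty S(x)^2\,dx = \IE(\min(X,X'))$ for an independent copy $X'$ of $X$ and bound $\min(X,X')\le (XX')^{1/2}$.)

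For the empirical survival function I would compute both norms pathwise by Fubini on the finite sum. Writing $\widehat S_X(x) = n^{-1}\sum_{j=1}^n \Ii_{(0,X_j)}(x)$ and using $\Ii_{(0,X_j)}(x)=\Ii_{\{x<X_j\}}$ for $x>0$, term-by-term integration yields
\[
\|\widehat S_X\|_{\IL^1(\IR_+,x^{-1/2})} = n^{-1}\sum_{j=1}^n \int_0^{X_j} x^{-1/2}\,dx = 2 n^{-1}\sum_{j=1}^n X_j^{1/2},
\]
while squaring and using $\Ii_{\{x<X_j\}}\Ii_{\{x<X_k\}} = \Ii_{\{x<\min(X_j,X_k)\}}$ gives
\[
\|\widehat S_X\|_{\IL^2(\IR,x^0)}^2 = n^{-2}\sum_{j,k=1}^n \min(X_j,X_k).
\]
Both right--hand sides are finite almost surely simply because each $X_j$ is finite a.s.\ (indeed $0\le \widehat S_X\le 1$ is supported in $(0,\max_j X_j)$), which already delivers the claimed a.s.\ membership. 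The hypothesis $\IE(X_1)<\infty$ then serves to control the expected norms: by stationarity $\IE(X_j^{1/2}) = \IE(X^{1/2})$ and $\IE\min(X_j,X_k)\le \IE(X_j) = \IE(X)$, so that $\IE\|\widehat S_X\|_{\IL^1(\IR_+,x^{-1/2})} = 2\IE(X^{1/2})$ and $\IE\|\widehat S_X\|_{\IL^2(\IR,x^0)}^2\le \IE(X)$ are both finite. The only genuinely subtle point in the whole argument remains the asymmetric squaring used for the $\IL^2$ bound on $S$; the rest is a direct application of the tail--integral identity.
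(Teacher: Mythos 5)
Your proof is correct, and it differs from the paper's in both places where real work is needed. For the $\IL^2$ bound on $S$, the paper applies the generalized Minkowski inequality to $S(y)=\int_0^\infty \Ii_{(y,\infty)}(x)f(x)\,dx$, obtaining $\|S\|^2\le\big(\int_0^\infty x^{1/2}f(x)\,dx\big)^2=\IE(X^{1/2})^2$; your Markov-plus-asymmetric-squaring step $S(x)^2\le \IE(X^{1/2})\,x^{-1/2}S(x)$ reaches the same conclusion with constant $2\,\IE(X^{1/2})^2$, is more elementary, and never invokes the density $f$ (so it would survive even without the absolute-continuity built into the model); your parenthetical identity $\int_0^\infty S^2(x)\,dx=\IE(\min(X,X'))\le\IE(X^{1/2})^2$ even recovers the paper's constant. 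For the empirical survival function, the paper argues in expectation: $\IE\|\widehat S_X\|_{\IL^1(\IR_+,x^{-1/2})}=\int_0^\infty x^{-1/2}S(x)\,dx<\infty$, and, since $\widehat S_X\in[0,1]$ forces $\widehat S_X^2\le \widehat S_X$, $\IE\|\widehat S_X\|^2\le\int_0^\infty S(x)\,dx=\IE(X_1)<\infty$, from which a.s.\ finiteness follows. You instead compute both norms pathwise in closed form, $2n^{-1}\sum_j X_j^{1/2}$ and $n^{-2}\sum_{j,k}\min(X_j,X_k)$, which makes the a.s.\ membership immediate and in fact exposes that it holds with no moment assumption at all, since $\widehat S_X$ is bounded with support in $(0,\max_j X_j)$. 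What the hypothesis $\IE(X_1)<\infty$ genuinely buys --- and what the paper's expectation computation delivers and later uses (e.g.\ in the variance bounds feeding Theorem \ref{thm:upp_bound} and Corollary \ref{cor:consis_dep}) --- is finiteness of $\IE\|\widehat S_X\|^2$; your final stationarity step $\IE\min(X_j,X_k)\le\IE(X_1)$ recovers exactly that bound, so nothing is lost relative to the paper.
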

The proof of Proposition \ref{prop:surv} can be found in the Appendix.
We define an estimator of $S$ based on the contaminated data $Y_1, \dots, Y_n$ and use the rich theory of Mellin transforms. Before we summarize the main properties of the Mellin transform let us shortly study the upcoming multiplicative convolution.
\paragraph{Multiplicative Convolution}
In the introduction we already mentioned that the density $f_Y$ of $Y_1$ can be written as the multiplicative convolution of the densities $f$ and $g$. We will now define this convolution in a more general setting. Let $c\in \IR$. For two functions $h_1,h_2\in \IL^1(\IR_+, x^{c-1})$ we define the multiplicative convolution $h_1*h_2$ of $h_1$ and $h_2$ by
\begin{align}\label{eq:mult:con}
    (h_1*h_2)(y):=\int_0^{\infty} h_1(y/x) h_2(x) x^{-1} dx, \quad y\in \IR.
\end{align}
In fact, it is possible to show that the function $h_1*h_2$ is well-defined, $h_1*h_2=h_2*h_1$ and $h_1*h_2 \in \IL^1(\IR,x^{c-1})$, compare \cite{miguel2021anisotropic}. It is worth pointing out, that the definition of the multiplicative convolution in equation \reff{eq:mult:con} is independent of the model parameter $c\in \IR$. We also know that for densities $h_1,h_2$; $h_1,h_2\in \IL^1(\IR_+,x^0)$. If additionally $h_1\in \IL^2(\IR_+, x^{2c-1})$  then $h_1*h_2 \in \IL^2(\IR_+,x^{2c-1})$. 

\paragraph{Mellin transform}
We will now collect the main properties of the Mellin transform which will be used in the upcoming theory. Proof sketches of these properties can be found in \cite{miguel2021anisotropic}. Let $h_1\in \IL^1(\IR,x^{c-1})$. Then, we define the Mellin transform of $h_1$ at the development point $c\in \IR$ as the function $\sM_c[h]:\IR\rightarrow \IC$ with
\begin{align}
    \sM_c[h_1](t):= \int_0^{\infty} x^{c-1+it} h_1(x)dx, \quad t\in \IR.
\end{align}
One key property of the Mellin transform, which makes it so appealing for the use of multiplicative deconvolution, is the so-called convolution theorem, that is, for $h_1, h_2\in \IL^1(\IR_+,x^{c-1})$,
\begin{align}
    \sM_c[h_1*h_2](t)=\sM_c[h_1](t) \sM_c[h_2](t), \quad t\in \IR.
\end{align}
Addtionally, for the estimation of the survival function the following property is used. Let $h\in \IL^1(\IR_+,x^{c-1})$ be a density and $S_h:=\IR_+\rightarrow \IR_+, y \mapsto \int_y^{\infty} h(x)dx$ its corresponding survival function. Then for any $c>0$, $S_h\in \IL^1(\IR_+, x^{c-1})$ if and only if $h\in \IL^1(\IR_+, x^{c})$. Furthermore, for any $t\in \IR$,
\begin{align*}
    \sM_c[S_h](t)=(c+it)^{-1}\sM_{c+1}[h](t).
\end{align*}
Let us now define the Mellin transform of a  square-integrable function, that is, for $h_1\in \IL^2(\IR_+, x^{2c-1})$ we make use of the definition of the Fourier-Plancherel transform. To do so, let $\varphi:\IR \rightarrow \IR_+, x\mapsto \exp(-2\pi x)$ and $\varphi^{-1}: \IR_+ \rightarrow \IR$ be its inverse. Then, as diffeomorphisms, $\varphi, \varphi^{-1}$ map  Lebesgue null sets on Lebesgue null sets. Thus the isomorphism $\Phi_c:\IL^2(\IR_+,x^{2c-1}) \rightarrow \IL^2(\IR), h\mapsto \varphi^c \cdot(h\circ \varphi)$ is well-defined. Moreover, let $\Phi^{-1}_c: \IL^2(\IR) \rightarrow \IL^2(\IR_+,x^{2c-1})$ its inverse. Then for $h\in \IL^2(\IR_+,x^{2c-1})$ we define the Mellin transform of $h$ developed in $c\in \IR$ by
\begin{align*}\label{eq:mel;def}
	\sM_c[h](t):= (2\pi) \mathcal F[\Phi_c[h]](t) \quad \text{for any } t\in \IR,
\end{align*} 
  where $\mathcal F: \IL^2(\IR)\rightarrow \IL^2(\IR), H\mapsto (t\mapsto \mathcal F[H](t):=\lim_{k\rightarrow \infty}\int_{-k}^k \exp(-2\pi i t x) H(x) dt)$ is the Plancherel-Fourier transform. Due to this definition several properties of the Mellin transform can be deduced from the well-known theory of Fourier transforms. In the case that $h\in \IL^1(\IR_+,x^{c-1}) \cap \IL^2(\IR_+,x^{2c-1})$ we have 

\begin{align}
	\sM_c[h](t) =\int_0^{\infty} x^{c-1+it} h(x)dx \quad \text{for any } t\in \IR
\end{align}
which coincides with the usual notion of Mellin transforms as considered in \cite{ParisKaminski2001}. 

Now, due to the construction of the operator $\mathcal M_c: \IL^2(\IR_+,x^{2c-1}) \rightarrow \IL^2(\IR)$ it can easily be seen that it is an isomorphism. We denote by $\mathcal M_c^{-1}: \IL^2(\IR) \rightarrow \IL^2(\IR_+,x^{2c-1})$ its inverse. If additionally to $H\in \IL^2(\IR)$, $H\in \IL^1(\IR)$, we can express the inverse Mellin transform explicitly through
\begin{align}\label{eq:Mel:inv}
\sM_{c}^{-1}[H](x)= \frac{1}{2\pi } \int_{-\infty}^{\infty} x^{-c-it} H(t) dt, \quad \text{ for any } x\in \IR_+.
\end{align} 
Furthermore, we can directly show that a Plancherel-type equation holds for the Mellin transform, that is for all $h_1, h_2 \in \IL(\IR_+,x^{2c-1})$, 
\begin{align}\label{eq:Mel:plan}
    \hspace*{-0.5cm}\langle h_1, h_2 \rangle_{x^{2c-1}} = (2\pi)^{-1} \langle \sM_c[h_1], \sM_c[h_2] \rangle_{\IR} \quad \text{ whence } \quad \| h_1\|_{x^{2c-1}}^2=(2\pi)^{-1} \|\sM_c[h]\|_{\IR}^2.
\end{align} 
Let us now construct our estimator based on the above properties.

\paragraph{Estimation strategy}
We consider the case $c=1/2$ where the weighted $\IL^2$-norm is the usual unweighted $\IL^2$-norm. 
Assuming now that $\IE(X_1^{1/2})< \infty$ we have for $k\in \IR_+$ that  $\sM_{1/2}[S]\Ii_{[-k,k]} \in \IL^1(\IR) \cap \IL^2(\IR)$ and thus
\begin{align}\label{eq:S_k_1}
    S_k(x):= \sM_{1/2}^{-1}[\sM_{1/2}[S]\Ii_{[-k,k]}](x)= \frac{1}{2\pi} \int_{-k}^k x^{-1/2-it} \sM_{1/2}[S](t)dt, \quad x\in \IR_+,
\end{align}
is an approximation of $S$ in the $\IL^2(\IR_+,x^0)$ sense, that is, $\|S_k-S\|^2\rightarrow0$ for $k\rightarrow\infty.$ 
Now, applying the property of the Mellin transform for survival functions, we know that $\sM_{1/2}[S](t)=(1/2+it)^{-1}\sM_{3/2}[f](t)$ for all $t\in \IR$. Assuming that $\IE(U^{1/2})<\infty$, and thus $\IE(Y^{1/2})<\infty$, we get $f_Y, g\in \IL^1(\IR, x^{1/2})$. We can deduce from the convolution theorem that $\sM_{3/2}[f_Y]=\sM_{3/2}[f] \sM_{3/2}[g]$. Under the mild assumption that forall $t\in \IR, \sM_{3/2}[g](t)\neq 0$ we can rewrite equation \reff{eq:S_k_1} in the following form
\begin{align}\label{eq:S_k_2}
    S_k(x)= \frac{1}{2\pi} \int_{-k}^k x^{-1/2-it} \frac{\sM_{3/2}[f_Y](t)}{(1/2+it)\sM_{3/2}[g](t)} dt.
\end{align}
To derive an estimator from equation \reff{eq:S_k_2} we use the empirical Mellin transform given by $\widehat{\sM}(t):=n^{-1}\sum_{j=1}^n Y_j^{1/2+it}$ as an unbiased estimator of $\sM_{3/2}[f_Y](t)$ for all $t\in \IR$. Keeping in mind that $|\widehat{\sM}(t)| \leq |\widehat{\sM}(0)|<\infty$ almost-surely, it is sufficient to assume that $\int_{-k}^k |(1/2+it)\sM_{3/2}[g](t)|^{-2}dt<\infty$ for all $k\in \IR_+$ to ensure the well-definedness of the spectral cut-off estimator
\begin{align}\label{eq:estim:def}
    \widehat S_k(x):= \frac{1}{2\pi} \int_{-k}^k x^{-1/2-it} \frac{\widehat{\sM}(t)}{(1/2+it)\sM_{3/2}[g](t)} dt, \quad k, x\in \IR_+.
\end{align}
Up to now, we had two minor conditions on the Mellin transform of the error density $g$ which we want to collect in the following assumption,
\begin{align*}
    \forall \,t \in \IR: \sM_{3/2}[g](t)\neq 0 \quad \text{ and } \quad \forall\, k\in \IR_+:\int_{-k}^k |(1/2+it)\sM_{3/2}[g](t)|^{-1} dt <\infty \tag{\textbf{[G0]}}.
\end{align*}
The following proposition shows that the proposed estimator is consistent for a suitable choice of a cut-off parameter and under certain assumptions on the dependency structure of $X_1, \dots, X_n$.
It is worth stressing out, that for $t\in \IR$ the estimator $(1/2+it)^{-1} \widehat \sM_X(t):=(1/2+it)^{-1} n^{-1}\sum_{j=1}^n X_j^{1/2+it}$ is an unbiased estimator of $\sM_{1/2}[S](t)$. Furthermore, there is a special link between the empirical survival function and the estimator $(1/2+it)^{-1/2} \widehat \sM(t)$. In fact, \reff{prop:surv} ensures that $\widehat S_X\in \IL^1(\IR_0,x^{-1/2})$ almost surely which implies the existence of  Mellin transform of $\widehat S_X$ almost surely. From that, it can easily be shown that 
\begin{align}\label{eq:mel:surv}
    \sM_{1/2}[\widehat S_X](t)= (1/2+it)^{-1}\widehat \sM_X(t)
\end{align}
for all $t\in \IR$. 
\begin{thm}\label{thm:upp_bound}
Assume that $\IE(Y)< \infty$ and that \textbf{[G0]} holds. Then for any $k\in \IR_+$,
\begin{align*}
    \IE(\|\widehat S_k-S\|^2) \leq \|S-S_k\|^2 + \IE(Y_1)\frac{\Delta_g(k)}{n} + \frac{1}{2\pi} \int_{-k}^k \Var(\sM_{1/2}[\widehat S_X](t)) dt,
\end{align*}
where $\Delta_g(k)= (2\pi)^{-1}\int_{-k}^k |(1/2+it)\sM_{3/2}[g](t)|^{-2}dt$.\\
If $(k_n)_{n\in \IN}$ is chosen such that $k_n \rightarrow \infty$ for $n\rightarrow \infty$, $\frac{1}{2\pi} \int_{-k_n}^{k_n} \Var(\sM_{1/2}[\widehat S_X](t)) dt \rightarrow 0$ and $\Delta_g(k_n)n^{-1}\rightarrow0$, the consistency of $\widehat S_{k_n}$, that is,
\begin{align*}
    \IE(\|\widehat S_{k_n}-S\|^2) \rightarrow 0, \qquad\text{ } n\rightarrow \infty
\end{align*} is implied.
\end{thm}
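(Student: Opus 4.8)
The plan is to transport the whole risk into the Mellin/Fourier domain and then exploit a support-based orthogonality. With $c=1/2$ the weight is trivial, so the Plancherel identity \reff{eq:Mel:plan} gives $\IE(\|\widehat S_k-S\|^2)=(2\pi)^{-1}\IE\|\sM_{1/2}[\widehat S_k]-\sM_{1/2}[S]\|_{\IR}^2$. By the very definition \reff{eq:estim:def} one has $\sM_{1/2}[\widehat S_k](t)=\widehat G_k(t)\Ii_{[-k,k]}(t)$ with $\widehat G_k(t):=\widehat{\sM}(t)\,[(1/2+it)\sM_{3/2}[g](t)]^{-1}$, whereas $\sM_{1/2}[S_k](t)=\sM_{1/2}[S](t)\Ii_{[-k,k]}(t)$. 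Hence the deterministic bias $\sM_{1/2}[S_k]-\sM_{1/2}[S]$ is supported outside $[-k,k]$, while the stochastic part $\sM_{1/2}[\widehat S_k]-\sM_{1/2}[S_k]$ lives on $[-k,k]$; the two are orthogonal in $\IL^2(\IR)$ by disjointness of supports. Combining the unbiasedness of $\widehat{\sM}(t)$ for $\sM_{3/2}[f_Y](t)$ with $\sM_{1/2}[S]=(1/2+it)^{-1}\sM_{3/2}[f]$ and $\sM_{3/2}[f_Y]=\sM_{3/2}[f]\sM_{3/2}[g]$ shows $\IE\widehat G_k(t)=\sM_{1/2}[S](t)$ pointwise on $[-k,k]$ (the interchange of $\IE$ with the truncated integral being justified by $|\widehat{\sM}(t)|\le\widehat{\sM}(0)$ and the integrability granted by \textbf{[G0]}). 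Taking expectations thus produces the bias--variance identity
\[
\IE(\|\widehat S_k-S\|^2)=\|S-S_k\|^2+(2\pi)^{-1}\int_{-k}^k\frac{\Var(\widehat{\sM}(t))}{|(1/2+it)\sM_{3/2}[g](t)|^2}\,dt,
\]
reducing the claim to an analysis of $\Var(\widehat{\sM}(t))$.

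The decisive move is to condition on the clean sample $X_1,\dots,X_n$ and use that $U_1,\dots,U_n$ are i.i.d.\ and independent of it. Since $Y_j^{1/2+it}=X_j^{1/2+it}U_j^{1/2+it}$ and $\IE[U_j^{1/2+it}]=\sM_{3/2}[g](t)$, the conditional expectation is $\IE[\widehat{\sM}(t)\mid X_1,\dots,X_n]=\widehat{\sM}_X(t)\,\sM_{3/2}[g](t)$. Applying the law of total variance (to real and imaginary parts separately) splits $\Var(\widehat{\sM}(t))$ into $\IE[\Var(\widehat{\sM}(t)\mid X_1,\dots,X_n)]$ and $|\sM_{3/2}[g](t)|^2\,\Var(\widehat{\sM}_X(t))$. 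For the second piece, \reff{eq:mel:surv} yields $\widehat{\sM}_X(t)=(1/2+it)\sM_{1/2}[\widehat S_X](t)$, so after division by $|(1/2+it)\sM_{3/2}[g](t)|^2$ it collapses \emph{exactly} onto the oracle term $(2\pi)^{-1}\int_{-k}^k\Var(\sM_{1/2}[\widehat S_X](t))\,dt$; this is where the entire dependency structure of $X_1,\dots,X_n$ is isolated, to be handled later under $\beta$-mixing or the functional dependence measure.

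For the remaining piece, conditioning on $X_1,\dots,X_n$ makes the summands $X_j^{1/2+it}U_j^{1/2+it}$ independent across $j$, so $\Var(\widehat{\sM}(t)\mid X_1,\dots,X_n)=n^{-2}\sum_{j=1}^n|X_j^{1/2+it}|^2\Var(U_j^{1/2+it})$. Using $|X_j^{1/2+it}|^2=X_j$ and $\Var(U_j^{1/2+it})=\IE[U_j]-|\sM_{3/2}[g](t)|^2\le\IE[U_j]$, then taking expectations and invoking the independence of $X_j$ and $U_j$, yields the $t$-uniform bound $\IE[\Var(\widehat{\sM}(t)\mid X_1,\dots,X_n)]\le n^{-1}\IE[X_1]\IE[U_1]=n^{-1}\IE(Y_1)$; dividing by $|(1/2+it)\sM_{3/2}[g](t)|^2$ and integrating over $[-k,k]$ gives precisely $\IE(Y_1)\Delta_g(k)/n$. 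Assembling the three contributions establishes the inequality, and the consistency statement follows at once: as $k_n\to\infty$ the bias $\|S-S_{k_n}\|^2\to0$ by the approximation property noted after \reff{eq:S_k_1}, and the other two terms vanish by the stated hypotheses. I expect the only real care to lie in the bookkeeping --- justifying the Plancherel/Fubini interchanges and the complex-valued law of total variance --- with finiteness of every term guaranteed by $\IE(Y)<\infty$ and the squared-weight integrability ($\Delta_g(k)<\infty$) already imposed to define $\widehat S_k$.
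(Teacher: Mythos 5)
Your proposal is correct and follows essentially the same route as the paper's own proof: the support-based orthogonality in the Mellin domain giving $\|S-\widehat S_k\|^2=\|S-S_k\|^2+\|S_k-\widehat S_k\|^2$, Plancherel plus Fubini to reduce to $\Var(\widehat\sM(t))$, and then the conditional (law-of-total-variance) split with respect to $X_1,\dots,X_n$, bounding the error-noise part by $\IE(Y_1)\Delta_g(k)/n$ and identifying the remaining part with $\Var(\sM_{1/2}[\widehat S_X](t))$ via \reff{eq:mel:surv}. The paper writes the same decomposition as $\Var(\widehat\sM(t))=\Var(\widehat\sM(t)-\IE_{|X}(\widehat\sM(t)))+\Var(\IE_{|X}(\widehat\sM(t)))$, which is exactly your conditional-variance argument in different notation.
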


\begin{cor}\label{cor:consis_dep}
Under the assumptions of Theorem \reff{thm:upp_bound} we have
\begin{enumerate}
\item[\textbf{(I)}] for independent observations $X_1, \dots, X_n$,
\begin{align*}
    \IE(\|\widehat S_k-S\|^2) \leq \|S-S_k\|^2 + \IE(Y_1)\frac{\Delta_g(k)}{n} + \frac{\IE(X_1)}{n} ;
\end{align*}
\item[\textbf{(B)}] for $\beta$-mixing observations $X_1, \dots, X_n$ under the additional assumption that $\IE(X_1b(X_1))<\infty$,
\begin{align*}
    \IE(\|\widehat S_k-S\|^2) \leq \|S-S_k\|^2 + \IE(Y_1)\frac{\Delta_g(k)}{n} + \frac{c\IE(X_1b(X_1))}{n},
\end{align*}
where $c>0$ is a positive numerical constant;
\item[\textbf{(F)}] and for Bernoulli-shift processes \reff{def:bernoulli-shift} under the dependency measure \reff{def:functional_dependence_measure} provided that additionally $\sum_{j=1}^\infty\delta_1^X(j)^{1/2} < \infty$,
\begin{align*}
    \IE(\|\widehat S_k-S\|^2) \leq \|S-S_k\|^2 + \IE(Y_1)\frac{\Delta_g(k)}{n} + \frac{c\log(k)}{n}\Big(\sum_{j=1}^\infty\delta_1^X(j)^{1/2}\Big)^2
\end{align*}
where $c>0$ is a numerical positive constant.
\end{enumerate}
\end{cor}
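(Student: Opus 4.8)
The plan is to start from Theorem~\ref{thm:upp_bound}: in all three regimes the bias term $\|S-S_k\|^2$ and the error term $\IE(Y_1)\Delta_g(k)/n$ are already supplied, so the whole task is to bound the variance integral $\frac{1}{2\pi}\int_{-k}^k\Var(\sM_{1/2}[\widehat S_X](t))\,dt$ under each dependence assumption. Using the identity \eqref{eq:mel:surv}, namely $\sM_{1/2}[\widehat S_X](t)=(1/2+it)^{-1}\widehat\sM_X(t)$ with $\widehat\sM_X(t)=n^{-1}\sum_{j=1}^n X_j^{1/2+it}$, I would write $\Var(\sM_{1/2}[\widehat S_X](t))=|1/2+it|^{-2}\Var(\widehat\sM_X(t))$ and reduce everything to controlling $\Var(\widehat\sM_X(t))=n^{-2}\sum_{i,j}\Cov(Z_i,Z_j)$, where $Z_j:=X_j^{1/2+it}$ are identically distributed with $|Z_j|^2=X_j$. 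The common final step is then to integrate the resulting bound against $|1/2+it|^{-2}$ over $[-k,k]$, using $\frac{1}{2\pi}\int_{\IR}|1/2+it|^{-2}\,dt=1$.

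For part \textbf{(I)} independence removes the off-diagonal covariances, so $\Var(\widehat\sM_X(t))=n^{-1}\Var(Z_1)\le n^{-1}\IE|Z_1|^2=n^{-1}\IE(X_1)$, which is free of $t$; substituting and integrating yields the term $\IE(X_1)/n$ immediately. For part \textbf{(B)} I would use stationarity to bound $\Var(\widehat\sM_X(t))\le n^{-1}(\Var(Z_0)+2\sum_{k=1}^{n-1}|\Cov(Z_0,Z_k)|)$ and then invoke Rio's covariance inequality for $\beta$-mixing sequences (\cite{rio2013}, applied to the real and imaginary parts of $Z$ and controlled through $|Z_0|=X_0^{1/2}$), which bounds $|\Cov(Z_0,Z_k)|$ by a multiple of $\int_0^{\beta(k)}Q(u)^2\,du$, where $Q$ is the upper quantile function of $|Z_0|$, so that $Q(u)^2=Q_X(u)$ is the quantile function of $X_0$. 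Summing over lags the bound is again $t$-free, and after integrating out $|1/2+it|^{-2}$ it remains to identify $\sum_{k\ge1}\int_0^{\beta(k)}Q_X(u)\,du$ with $c\,\IE(X_1 b(X_1))$; this is a Fubini computation together with the change of variables $u=S(x)$, in which $b$ is exactly the mixing counting function of the statement, and finiteness is precisely the extra hypothesis $\IE(X_1 b(X_1))<\infty$.

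Part \textbf{(F)} is the most delicate and is where I expect the main obstacle. Since $X_j=J_{j,n}(\sG_j)$ is a Bernoulli shift \eqref{def:bernoulli-shift}, so is $Z_j=X_j^{1/2+it}$, and I would bound $\Var(\widehat\sM_X(t))$ through its functional dependence measure \eqref{def:functional_dependence_measure} together with Wu's $\IL^2$-variance inequality (\cite{wu2005anotherlook}), $\Var(\sum_{j=1}^n Z_j)\le n(\sum_{k\ge0}\delta_2^{Z}(k))^2$. The crux is to control $\delta_2^{Z}(k)=\|X_j^{1/2+it}-(X_j^{*(j-k)})^{1/2+it}\|_2$ by the $\nu=1$ measure of $X$ itself. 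For this I would establish the pointwise estimate $|a^{1/2+it}-b^{1/2+it}|^2\le C\,|1/2+it|\,|a-b|$ for all $a,b>0$, obtained by splitting $a^{1/2+it}-b^{1/2+it}$ into a modulus part and a phase part and using the elementary inequalities $(\sqrt a-\sqrt b)^2\le|a-b|$ and $\sqrt r\,\log r\le r-1$ for $r\ge1$. Taking expectations this yields $\delta_2^{Z}(k)\le C\,|1/2+it|^{1/2}(\delta_1^{X}(k))^{1/2}$, hence $\Var(\widehat\sM_X(t))\le C\,n^{-1}|1/2+it|(\sum_{k\ge1}(\delta_1^X(k))^{1/2})^2$.

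Finally, the extra factor $|1/2+it|$ now combines with $|1/2+it|^{-2}$ to produce $\int_{-k}^k|1/2+it|^{-1}\,dt$, which grows like $\log k$; together with the summability assumption $\sum_{j\ge1}(\delta_1^X(j))^{1/2}<\infty$, which keeps the constant finite, this gives exactly the stated $\frac{c\log(k)}{n}(\sum_j\delta_1^X(j)^{1/2})^2$. I expect the analytic pointwise bound $|a^{1/2+it}-b^{1/2+it}|^2\le C|1/2+it||a-b|$ to be the real work; once it is in place, all three parts follow from the same integration of the uniform-in-$t$ (respectively $|1/2+it|$-weighted) variance bound against $|1/2+it|^{-2}$.
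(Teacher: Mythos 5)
Your proposal is correct, and for parts \textbf{(I)} and \textbf{(F)} it is essentially the paper's own proof: the reduction of everything to the variance integral via \eqref{eq:mel:surv}, the $t$-free bound $n^{-1}\IE(X_1)$ under independence followed by $\frac{1}{2\pi}\int_{\IR}|1/2+it|^{-2}dt=1$, and in the Bernoulli-shift case the combination of Wu's projection inequality (the paper's Lemma \ref{lem:transfer_dependency}) with a H\"older-type pointwise estimate $|a^{1/2+it}-b^{1/2+it}|\le C(1+|t|^{1/2})|a-b|^{1/2}$ — your squared form $|a^{1/2+it}-b^{1/2+it}|^2\le C|1/2+it|\,|a-b|$ is exactly the paper's Lemma \ref{lem:holder}, proved there by the same modulus/phase splitting and the same capping-plus-$\min\{1,z\}\le z^{1/2}$ trick you sketch — after which the factor $|1/2+it|^{-1}$ integrates to the stated $\log k$. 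The only genuine divergence is in part \textbf{(B)}. The paper invokes a Viennet-type variance bound (its Lemma \ref{lem:beta_mix_var_bound}): there exist $b_k:\IR\to[0,1]$ with $\IE[b_k(X_0)]=\beta(k)$ such that $\Var\big(\sum_j h(X_j)\big)\le 4n\,\IE[|h(X_0)|^2b(X_0)]$ with $b=\sum_k b_k$, which applied to $h(x)=x^{1/2+it}$ (so $|h|^2(x)=x$) yields $c\,\IE(X_1b(X_1))/n$ in one line. You instead sum Rio-type covariance bounds $|\Cov(Z_0,Z_k)|\lesssim\int_0^{\beta(k)}Q_X(u)\,du$ and then convert the quantile sum into $\IE(X_1\tilde b(X_1))$, where $\tilde b=\sum_k\Ii_{(Q_X(\beta(k)),\infty)}$ is the counting function. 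Both routes are sound, but note that your $\tilde b$ is the extremal choice among admissible functions: by the Hardy--Littlewood rearrangement inequality, $\IE[X_1b(X_1)]\le\IE[X_1\tilde b(X_1)]=\sum_k\int_0^{\beta(k)}Q_X(u)\,du$ for Viennet's $b$, and the inequality can be strict. Consequently your argument proves the corollary with $\tilde b$ in place of $b$; under the hypothesis literally as stated (finiteness of $\IE(X_1b(X_1))$ for the $b$ of Lemma \ref{lem:beta_mix_var_bound}), the quantile sum you bound by need not be finite. In practice both quantities are finite under the usual summable-mixing and moment conditions, so this is a fine point rather than an error, but the paper's formulation of \textbf{(B)} is marginally sharper and carries a marginally weaker hypothesis than what your route delivers.
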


As we can see, the first term, the so-called bias term, in the upper boundary in Theorem \ref{thm:upp_bound} is monotonically decreasing in $k\in \IR_+$ while the second and the last term are monotonically increasing in $k\in \IR_+$. In fact, the second and the third term are a decomposition of $\IE(\|\widehat S_k-S_k\|^2)$, the so-called variance term. While Corollary \ref{cor:consis_dep} indicates when the third term is bounded, respectively growing with $\log(k)$, the general assumptions on the error densities allow us not to determine the exact growth of the second term. For a more sophisticated analysis of the variance's grwoth, we need to consider more specific assumptions on the error density $g$.\\
More precisely, the growth of $\Delta_g$ is determined by the decay of the Mellin transform of $g$. In this work, we mainly focus on the case of \textit{smooth error densities}, that is, there exists $c,C, \gamma\in \IR_+$ such that
\begin{align*}
\tag{\textbf{[G1]}}  c(1+t^2)^{-\gamma/2} \leq |\sM_c[g]( t)| \leq C (1+t^2)^{-\gamma/2} \text{ for } t\in \IR.
\end{align*} 
This assumption on the error density is typical in context of additive deconvolution problems (compare \cite{Fan1991}) and was also considered in the works of \cite{BelomestnyGoldenshluger2020} and \cite{Brenner-MiguelComteJohannes2020}.

We present some Mellin transform of commonly used distribution families.
\begin{exs}\label{exs:mel:well}
		\begin{enumerate}
			\item \textit{Beta distribution}: Consider the family $(g_b)_{b\in \IN}$, $g_b(x):= \Ii_{(0,1)}(x) b(1-x)^{b-1}$ for $b\in \IN$ and $x\in \IR_+$. Obviously, we see that $\mathcal M_c[g_b]$ is well-defined for $c>0$ and
		\begin{align*}
		\sM_c[g_b](t) =\prod_{j=1}^{b} \frac{j}{c-1+j+it}, \quad t\in \IR.
		\end{align*} 
			\item \textit{Scaled Log-Gamma distribution}: Consider the family $(g_{\mu, a, \lambda})_{(\mu,a,\lambda) \in \IR\times \IR_+\times \IR_+}$ where for $a,\lambda, x \in \IR_+$ and $\mu \in \IR$ we have $g_{\mu, a, \lambda}(x)=\frac{\exp(\lambda \mu)}{\Gamma(a)} x^{-\lambda-1} (\log(x)-\mu)^{a-1}\Ii_{(e^{\mu}, \infty)}(x)$ . Then for $c<\lambda+1$,
		\begin{align*}
		    \sM_c[g_{\mu, a, \lambda}](t)= \exp(\mu(c-1+it)) (\lambda-c+1-it)^{-a}, \quad t\in \IR.
		\end{align*}
		If $a=1$ then $g_{\mu,1,\lambda}$ is the density of a Pareto distribution with parameter $e^{\mu}$ and $\lambda$. If $\mu=0$ we have that $g_{0, a, \lambda}$ is the density of a Log-Gamma distribution.
		\item \textit{Gamma distribution}: Consider the family $(g_d)_{d\in \IR_+}$,$g_d(x) = \frac{x^{d-1}}{\Gamma(d)} \exp(-x) \Ii_{\IR^+}(x)$ for $d,x\in \IR_+$. Obviously, we see that $\mathcal M_c[g_d]$ is well-defined for $c>-d+1$ and
		\begin{align*}
		\mathcal M_c[g_d](t)= \frac{\Gamma(c+d-1+it)}{\Gamma(d)}, \quad t\in \IR.
		\end{align*}
		\item \textit{Weibull distribution:} Consider the family $(g_m)_{m\in \IR_+}$, $g_m(x) = m x^{m-1} \exp(-x^m) \Ii_{\IR_+}(x)$ for $m,x\in \IR_+$. Obviously, we see that $\mathcal M_c[g_m]$ is well-defined for $c>-m+1$ and
		\begin{align*}
		\mathcal M_c[g_m](t)= \frac{(c-1+it)}{m}\Gamma(\frac{c-1+it}{m}), \quad t\in \IR.
		\end{align*}
		\item \textit{Log-normal distribution:}  Consider the family $(g_{\mu,\lambda})_{(\mu, \lambda) \in \IR\times \IR_+}$ where $g_{\mu,\lambda}$ for $\lambda, x \in \IR_+$ and $\mu \in \IR$ is given by $g_{\mu,\lambda}(x)=\frac{1}{\sqrt{2\pi}\lambda x} \exp(-(\log(x)-\mu)^2/2\lambda^2)\Ii_{\IR_+}(x)$. We see that $\sM_c[g_{\mu,\lambda}]$ is well-defined for any $c\in \IR$ and
			\begin{align*}
			\sM_c[g_{\mu,\lambda}](t)= \exp(\mu(c-1+it))\exp\left(\frac{\lambda^2(c-1+it)^2}{2}\right), \quad t\in \IR.
			\end{align*}
		\end{enumerate}
\end{exs}

In context of smooth error densities, we therefore have  the following with regards to \textbf{[G1]}.

\begin{exs}[Examples \ref{exs:mel:well}, continued]
\begin{enumerate}
    \item[(i)] \textit{Beta distribution:} For $b\in \IN$ and $t\in \IR$ we have already seen that $\sM_{3/2}[g_b](t)= \prod_{j=1}^{b} \frac{j}{1/2+j+it}$. Therefore, there exists $c_g, C_g>0$ such that
    \begin{align*}
        c_g(1+t^2)^{-b/2} \leq |\sM_{3/2}[g_b](t)| \leq C_g (1+t^2)^{-b/2}.
    \end{align*}
    \item[(ii)] \textit{Scaled Log-Gamma distribution:} For $\lambda >1/2$, $a\in \IR_+$ and $\mu, t\in \IR$ we have already seen that $\sM_{3/2}[g_{\mu,a,\lambda}](t)= \exp(\mu(1/2+it))(\lambda-1/2-it)^{-a}$. Therefore, there exists $c_g, C_g>0$ such that
    \begin{align*}
        c_g(1+t^2)^{-a/2} \leq |\sM_{3/2}[g_{\mu,a,\lambda}](t)| \leq C_g (1+t^2)^{-a/2}.
    \end{align*}
\end{enumerate}
\end{exs}
We would like to mention, that for small values of $\gamma$ in \textbf{[G1]}, it is possible to choose $k$ independent of the decay of the bias term, such that the risk in Theorem \ref{thm:upp_bound} is of order $n^{-1}$, respectively $\log(n)n^{-1}$. These cases are covered in the following section.

\paragraph{Parametric rate}
The special case $\gamma \leq 1/2$ in \textbf{[G1]} allows for a choice of the parameter $k\in \IR_+$ which leads to a parametric rate up to a log-term. It is worth pointing out, that this choice can be done independently of the precise decay of the bias term $\|S-S_k\|^2$. This can be accomplished by the naive bound
\begin{align*}
    \|S-S_k\|^2 = \frac{1}{\pi} \int_{k}^{\infty} |\sM_{1/2}[S](t)|^2 dt \leq k^{-1} \IE(X_1^{1/2})^{2}
\end{align*}
where we exploit $\sM_{1/2}[S](t)=(1/2+it)^{-1} \sM_{3/2}[f](t)$ and the bound $|\sM_{3/2}[f](t)|\leq \IE(X_1^{1/2})$. The different cases are collected in the following Proposition whose proof is omitted.

\begin{prop}
Let $\IE(Y)<\infty$ and let \textbf{[G1]} hold for $\gamma\leq 1/2$. Then,
\begin{align*}
    \Delta_g(k) \leq \begin{cases}  C(g) &, \gamma < 1/2;\\
    C(g)\log(k)& , \gamma = 1/2.
    \end{cases}
\end{align*}
For $\gamma=1/2$, choosing $k=n$ leads in all three cases \textbf{(I)},\textbf{(B)} and \textbf{(F)} to a parametric rate up to a log-term, that is,
\begin{align*}
    \IE(\|\widehat S_n-S\|^2) \leq C(f,g) \frac{\log(n)}{n}
\end{align*}
where $C(f,g)$ is dependent on $\IE(X_1),\IE(U_1)$, the constants in \textbf{[G1]} and the dependency structure.\\
If $\gamma<1/2$ in the cases \textbf{(I)} and \textbf{(B)}, choosing $k=\infty$ leads to a parametric rate, that is
\begin{align*}
    \IE(\|\widehat S_{\infty}- S\|^2) \leq \frac{C(f,g)}{n}
\end{align*}
where $C(f,g)$ depends of $\IE(X_1), \IE(Y_1)$ and the constants in \textbf{[G1]} and the dependency structure.
If $\gamma<1/2$ and we are in the case of \textbf{(F)} the third summand in Corollary \ref{cor:consis_dep} dominates the second, which leads to no improvement in the rate.
\end{prop}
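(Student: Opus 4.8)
The plan is to first establish the two-case bound on $\Delta_g(k)$ by a direct integral estimate, and then to feed this bound---together with the naive bias estimate recorded above and the three variance bounds of Corollary \ref{cor:consis_dep}---into the risk decomposition of Theorem \ref{thm:upp_bound}.

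For the bound on $\Delta_g$, I would start from the lower bound in \textbf{[G1]} at development point $3/2$, with lower constant $c_g$, which gives $|\sM_{3/2}[g](t)|^{-2}\leq c_g^{-2}(1+t^2)^{\gamma}$, and combine it with $|1/2+it|^{-2}=(1/4+t^2)^{-1}$ to obtain
\begin{align*}
\Delta_g(k)\leq \frac{c_g^{-2}}{2\pi}\int_{-k}^{k}\frac{(1+t^2)^{\gamma}}{1/4+t^2}\,dt.
\end{align*}
The integrand is continuous and behaves like $|t|^{2\gamma-2}$ as $|t|\to\infty$. For $\gamma<1/2$ the exponent satisfies $2\gamma-2<-1$, so the integral converges as $k\to\infty$ and $\Delta_g(k)\leq C(g)$ uniformly in $k$; for $\gamma=1/2$ the integrand is of order $|t|^{-1}$ and the integral grows like $\log(k)$, yielding $\Delta_g(k)\leq C(g)\log(k)$. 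This settles the first display.

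For the rate statements I would insert the naive bias bound $\|S-S_k\|^2\leq k^{-1}\IE(X_1^{1/2})^2$ and the bounds just obtained into Theorem \ref{thm:upp_bound}, using for the third summand the case-dependent bounds of Corollary \ref{cor:consis_dep}. When $\gamma=1/2$ and $k=n$, the bias is of order $n^{-1}$, the second summand satisfies $\IE(Y_1)\Delta_g(n)/n\leq C(g)\IE(Y_1)\log(n)/n$, and the third summand is of order $n^{-1}$ in cases \textbf{(I)} and \textbf{(B)} and of order $\log(n)/n$ in case \textbf{(F)}; all three are $O(\log(n)/n)$, which gives the stated bound. When $\gamma<1/2$ in cases \textbf{(I)} or \textbf{(B)}, the uniform bound $\Delta_g(k)\leq C(g)$ shows that $\Delta_g(\infty)<\infty$, so the estimator remains well-defined for $k=\infty$; there the bias vanishes because $S_\infty=S$, the second summand is bounded by $C(g)\IE(Y_1)/n$, and the third summand is the constant-order term of Corollary \ref{cor:consis_dep}, which together produce the parametric rate $C(f,g)/n$.

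Finally, for case \textbf{(F)} with $\gamma<1/2$, I would observe that the variance bound carries an unavoidable factor $\log(k)$, so the third summand $c\log(k)\,n^{-1}\big(\sum_{j}\delta_1^X(j)^{1/2}\big)^2$ grows in $k$ and always dominates the now-bounded second summand $C(g)\IE(Y_1)/n$; hence one cannot let $k\to\infty$, and optimizing the bias $k^{-1}$ against $\log(k)/n$ only recovers $\log(n)/n$, with no gain over the $\gamma=1/2$ situation. I expect the only genuinely delicate points to be the justification that $\widehat S_\infty$ is well-defined in cases \textbf{(I)} and \textbf{(B)}---which is exactly the finiteness of $\Delta_g(\infty)$ secured by the convergence of the integral above for $\gamma<1/2$---and the recognition that the $\log(k)$ appearing in the dependent case \textbf{(F)} is the structural reason the parametric rate is unattainable there.
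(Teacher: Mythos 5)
Your proposal is correct and follows exactly the route the paper intends: the paper omits the proof precisely because it amounts to combining the naive bias bound $\|S-S_k\|^2\leq k^{-1}\IE(X_1^{1/2})^2$, the $\Delta_g$ estimate under \textbf{[G1]} (integrand of order $|t|^{2\gamma-2}$, hence bounded for $\gamma<1/2$ and logarithmic for $\gamma=1/2$), and the three variance bounds of Corollary \ref{cor:consis_dep} inside the risk decomposition of Theorem \ref{thm:upp_bound}, which is precisely what you do. Your remark that $\widehat S_\infty$ must be understood via the $\IL^2$-inverse Mellin transform (well-defined because $\Delta_g(\infty)<\infty$ when $\gamma<1/2$, even though the pointwise integral formula diverges) and that the $\log(k)$ factor in case \textbf{(F)} structurally blocks the parametric rate are exactly the right delicate points.
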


\paragraph{Non-parametric rate} Now let us consider the case where $\gamma>1/2$ which forces $\Delta_g(k)$ to be polynomial increasing. In fact, 
under \textbf{[G1]} we see that $c_g k^{2\gamma-1}\leq \Delta_g( k)\leq C_g k^{2\gamma-1}$ for every $\ k \in \IR_+$. \\
To control the bias-term we will introduce regularity spaces characterized by the decay of the Mellin transform in analogy to the usual considered Sobolev spaces for deconvoultion problems.
Let us for $s\in \IR_+$ define the \textit{Mellin-Sobolev space} by
\begin{align}\label{eq:ani:mell:sob}
\sW^{ s}_{1/2}(\IR_+):= \{ h\in \IL^2(\IR_+,  x^{0}): |h|_{s}^2:= \|(1+t)^s\sM_{1/2}[h]\|_{\IR}^2<\infty\} 
\end{align}
and the corresponding ellipsoids with $L\in \IR$ by 
$
    \sW^{ s}_{1/2}(L):=\{ h\in \sW^{s}_{1/2}(\IR_+): |h|_{ s}^2 \leq L\}
$.
For $f\in \sW^{s}_{1/2}(L)$ we deduce that $\|S-S_{k}\|^2 \leq L k^{-2s} $.
Setting 
\begin{align*}
    \IW^{s}_{1/2}(L):=\{S \in \sW^{s}_{1/2}(L):  \text{S survival function}, \Var_S(\sM_{1/2}[\widehat S_X](t))\leq L (1+|t|)^{-1} n^{-1} \text{ for any } t\in \IR \}
\end{align*} and the previous discussion leads to the following statement whose proof is omitted.
\begin{prop}\label{prop:upper:minimax}
	Let $\IE(U)<\infty$. Then under the assumptions \textbf{[G0]} and \textbf{[G1]},
	\begin{align*}
	\sup_{S\in \IW^{s}_{1/2}(L)} \IE(\|S- \widehat S_{ k_n}\|)^2 \leq C(L,g , s) n^{-2s/(2s+2\gamma-1)} 
	\end{align*}
	for the choice $k_n:=n^{1/(2s+2\gamma-1)}$.
\end{prop}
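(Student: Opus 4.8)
The plan is to read the result off the oracle bound of Theorem \ref{thm:upp_bound}, bounding each of its three contributions uniformly over the class $\IW^s_{1/2}(L)$ and then balancing bias against variance through the prescribed cut-off $k_n=n^{1/(2s+2\gamma-1)}$ (we are in the non-parametric regime $\gamma>1/2$). First I would check that the hypotheses of Theorem \ref{thm:upp_bound} are in force: since $\IE(U)<\infty$ and the variance constraint defining $\IW^s_{1/2}(L)$ controls, in its case $t=0$, the fluctuation of $2n^{-1}\sum_j X_j^{1/2}=\sM_{1/2}[\widehat S_X](0)$, one gets $\IE(X_1)<\infty$ and hence $\IE(Y_1)=\IE(X_1)\IE(U_1)<\infty$. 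Thus \textbf{[G0]} together with the finite first moment of $Y$ lets me invoke the theorem.

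For the three terms I would argue separately. The bias term is handled by the membership $S\in\sW^s_{1/2}(L)$, which yields $\|S-S_k\|^2\le L\,k^{-2s}$ uniformly over the class. The second term is controlled by \textbf{[G1]} in the regime $\gamma>1/2$, where $\Delta_g(k)\le C_g\,k^{2\gamma-1}$, so that $\IE(Y_1)\Delta_g(k)/n\le C_g\,\IE(Y_1)\,k^{2\gamma-1}/n$. For the last term I would use the defining inequality $\Var_S(\sM_{1/2}[\widehat S_X](t))\le L(1+|t|)^{-1}n^{-1}$ of $\IW^s_{1/2}(L)$ and integrate,
\begin{align*}
\frac{1}{2\pi}\int_{-k}^k \Var_S(\sM_{1/2}[\widehat S_X](t))\,dt \le \frac{L}{\pi n}\int_0^k (1+t)^{-1}\,dt = \frac{L}{\pi n}\log(1+k),
\end{align*}
so that this contribution is of order $\log(k)/n$.

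Collecting the three estimates gives, uniformly over $\IW^s_{1/2}(L)$,
\begin{align*}
\IE(\|\widehat S_k-S\|^2) \le L\,k^{-2s} + C_g\,\IE(Y_1)\,\frac{k^{2\gamma-1}}{n} + \frac{L}{\pi n}\log(1+k).
\end{align*}
Inserting $k_n=n^{1/(2s+2\gamma-1)}$, the first term equals $L\,n^{-2s/(2s+2\gamma-1)}$, while the exponent of the second is $\tfrac{2\gamma-1}{2s+2\gamma-1}-1=\tfrac{-2s}{2s+2\gamma-1}$, so it equals $C_g\IE(Y_1)\,n^{-2s/(2s+2\gamma-1)}$; both match the claimed order. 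It remains to confirm the logarithmic term is negligible: since $\gamma>1/2$ forces $2s/(2s+2\gamma-1)<1$, the target rate decays strictly slower than $n^{-1}$, and $\log(1+k_n)\asymp\log n$, whence $\log(1+k_n)/n=o\big(n^{-2s/(2s+2\gamma-1)}\big)$ and is absorbed into the constant. This produces the bound with a constant $C(L,g,s)$.

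I expect the argument to be essentially bookkeeping, since all the genuine analysis is already packaged in Theorem \ref{thm:upp_bound}, in the bias estimate on $\sW^s_{1/2}(L)$, and in the explicit variance constraint built into $\IW^s_{1/2}(L)$. The one point requiring care is the uniformity of the constant over the class: the bias and variance contributions are uniform by construction, but the prefactor $\IE(Y_1)$ in the second term depends on the law of $X$, so I would note that it is finite and uniformly controlled (through $\IE(U)<\infty$ together with the moment content of the variance and Sobolev constraints) before folding it into $C(L,g,s)$. Verifying that the residual $\log(n)/n$ term does not degrade the rate is the remaining, and minor, subtlety.
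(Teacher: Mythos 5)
Your proposal is correct and follows exactly the route the paper intends: the paper explicitly omits this proof because it is the ``previous discussion'' assembled, namely the oracle bound of Theorem \ref{thm:upp_bound}, the bias bound $\|S-S_k\|^2\leq Lk^{-2s}$ on $\sW^s_{1/2}(L)$, the bound $\Delta_g(k)\leq C_g k^{2\gamma-1}$ from \textbf{[G1]} with $\gamma>1/2$, the integrated variance constraint built into $\IW^s_{1/2}(L)$ giving the negligible $\log(k_n)/n$ term, and the choice $k_n=n^{1/(2s+2\gamma-1)}$. Your added care about the uniformity of the prefactor $\IE(Y_1)$ over the class is a genuine (and correct) refinement of a point the paper glosses over and only makes explicit later, in Corollary \ref{cor:upper:minimax}, by replacing the variance constraint with moment conditions such as $\IE(X_1)\leq L$.
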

Again let us consider the three different cases of dependency considered in Corollary \ref{cor:consis_dep}. Then we get as a direct consequence of Proposition \ref{prop:upper:minimax} and Corollary \ref{cor:consis_dep} the following Corollary.
\begin{cor}\label{cor:upper:minimax}
    The assumption that $\Var(\sM_{1/2}[\widehat S_X](t))\leq L (1+|t|)^{-1} n^{-1} \text{ for any } t\in \IR$ in Proposition \ref{prop:upper:minimax} can be replaced in the three different dependency cases by
    \begin{enumerate}
        \item[\textbf{(I)}] $\IE(X_1) \leq L$
        \item[\textbf{(B)}] $\IE(X_1b(X_1))\leq L$
        \item[\textbf{(F)}] $\sum_{k=1}^\infty \delta_1^X(k)^{1/2} \leq L^{1/2}$
        
    \end{enumerate}
\end{cor}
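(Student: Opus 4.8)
The plan is to bypass the pointwise variance condition defining $\IW^{s}_{1/2}(L)$ and instead feed the integrated-variance estimates of Corollary \ref{cor:consis_dep} directly into the risk decomposition of Theorem \ref{thm:upp_bound}. The key observation is that the only role played by the hypothesis $\Var_S(\sM_{1/2}[\widehat S_X](t))\leq L(1+|t|)^{-1}n^{-1}$ in the proof of Proposition \ref{prop:upper:minimax} is to control the last summand $\frac{1}{2\pi}\int_{-k}^k \Var(\sM_{1/2}[\widehat S_X](t))\,dt$. Integrating that pointwise bound over $[-k,k]$ produces a term of order $\log(k)/n$, which is asymptotically negligible against the target rate $n^{-2s/(2s+2\gamma-1)}$. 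Hence it suffices to verify that each of the three dependency conditions (I), (B), (F) yields an integrated-variance term of at most this order.

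First I would recall that under \textbf{[G1]} with $\gamma>1/2$ one has $\Delta_g(k)\leq C_g k^{2\gamma-1}$, and that for $S\in\sW^{s}_{1/2}(L)$ the bias obeys $\|S-S_k\|^2\leq L k^{-2s}$. Substituting $k_n=n^{1/(2s+2\gamma-1)}$ into the first two terms of the bound in Theorem \ref{thm:upp_bound} balances them at the common order $n^{-2s/(2s+2\gamma-1)}$, exactly as in the proof of Proposition \ref{prop:upper:minimax}. What remains is to absorb the variance integral in each dependency regime.

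For case (I), Corollary \ref{cor:consis_dep} bounds the variance integral by $\IE(X_1)/n\leq L/n$, and for case (B) by $c\,\IE(X_1 b(X_1))/n\leq cL/n$. Since $2\gamma-1>0$ forces $2s/(2s+2\gamma-1)<1$, the term of order $n^{-1}$ decays strictly faster than the main rate and is therefore dominated. For case (F), Corollary \ref{cor:consis_dep} gives $\frac{c\log(k_n)}{n}\big(\sum_{j=1}^\infty\delta_1^X(j)^{1/2}\big)^2\leq\frac{cL\log(k_n)}{n}$; as $\log(k_n)=\frac{\log n}{2s+2\gamma-1}$, this is of order $\log(n)/n$, which again decays faster than $n^{-2s/(2s+2\gamma-1)}$. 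Collecting the three summands in each case yields the uniform bound $C(L,g,s)\,n^{-2s/(2s+2\gamma-1)}$, with the constant absorbing $\IE(Y_1)$, $C_g$, and the relevant dependency constant.

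The argument presents no genuine obstacle: it is a direct substitution of the dependency-specific variance estimates of Corollary \ref{cor:consis_dep} into the bias--variance split, followed by the same rate-balancing at $k_n=n^{1/(2s+2\gamma-1)}$ as in Proposition \ref{prop:upper:minimax}. The only point requiring a moment's care is confirming that the extra logarithmic factor in case (F) does not spoil the rate, which is immediate since the exponent $2s/(2s+2\gamma-1)$ is strictly below $1$ and $\log(n)\,n^{-1}=o\big(n^{-2s/(2s+2\gamma-1)}\big)$.
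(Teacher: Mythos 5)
Your proof is correct and follows essentially the same route as the paper, which presents this corollary as a direct consequence of Proposition \ref{prop:upper:minimax} and Corollary \ref{cor:consis_dep}: you substitute the dependency-specific integrated-variance bounds into the bias--variance decomposition of Theorem \ref{thm:upp_bound} and balance at $k_n=n^{1/(2s+2\gamma-1)}$, noting that the resulting $n^{-1}$ and $\log(n)n^{-1}$ terms are negligible against $n^{-2s/(2s+2\gamma-1)}$ when $\gamma>1/2$. Your write-up merely makes explicit the rate comparison that the paper leaves implicit.
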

We will now show, that the rates presented in Corollary \ref{cor:upper:minimax} are optimal in the sense, that there exists no estimator based on the i.i.d. sample $Y_1,\dots, Y_n$ that can reach uniformly over $\IW_{1/2}^s(L)$ a better rate. This implies that the estimator $\widehat S_{k_n}$ presented in \ref{prop:upper:minimax} is minimax-optimal. \\
For technical reason we need an additional assumption on the error density $g$. Let us assume that the support of $g$ is bounded, that is there exists an $d>0$ such that 
 for all $x \geq d, g(x)=0$. For the sake of simplicity we assume that $d=1$. Further, let there be constants $c, C, \gamma\in \IR_+$ such that
\begin{align*}
\tag{\textbf{[G2]}}  c(1+t^2)^{-\gamma/2} \leq |\sM_{1/2}[g](t)| \leq C (1+t^2)^{-\gamma/2} \text{ for } t\in \IR.
\end{align*} 

With this additional assumption we can show the following theorem where its proof can be found in Appendix \ref{a:mt}.

\begin{thm}\label{theorem:lower_bound}
	Let $s,\gamma\in \IN$ and assume that \textbf{[G1]} and \textbf{[G2]} hold. Then there exist
	constants $c_{g},L_{s,g}>0$ such that for all
	$L\geq L_{s,g}$, $n\in \IN$, and for any estimator $\widehat S$ of $S$ based
	on an i.i.d. sample $Y_1,\dots, Y_n$, 
		\begin{align*}
		\sup_{S\in\IW^{s}_{1/2}(L)}\IE(\|\widehat S-S\|^2) \geq c_{g}   n^{-2s/(2s+2\gamma-1)}.
		\end{align*}
\end{thm}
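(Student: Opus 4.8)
The plan is to prove the lower bound by the standard reduction of estimation to multiple hypothesis testing. Because the risk is the integrated $\IL^2$-risk and the target rate is genuinely non-parametric, a two-point argument is insufficient; instead I would construct a rich family of alternatives indexed by a hypercube $\{0,1\}^M$ and apply Assouad's lemma (equivalently, the Varshamov--Gilbert bound together with Fano's inequality). The heart of the argument is the explicit construction of these hypotheses in the Mellin (frequency) domain at the critical resolution $k_n\asymp n^{1/(2s+2\gamma-1)}$, which is exactly the resolution realising the matching upper bound in Proposition \ref{prop:upper:minimax}.

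\emph{Construction.} Fix a base density $f_0$ whose survival function $S_0$ lies strictly inside $\IW^s_{1/2}(L)$ and which is bounded away from $0$ on a compact interval. I would perturb $f_0$ by functions localised around the frequency band $|t|\asymp k_n$ in the Mellin domain: choose a smooth mother perturbation $\psi$ with $\int_0^\infty\psi(x)\,dx=0$ (so that normalisation is preserved) and with compactly supported Mellin transform, form $M\asymp k_n$ frequency-disjoint dilated and modulated copies $\psi_1,\dots,\psi_M$, and set $f_\theta:=f_0+\delta\sum_{j=1}^M\theta_j\psi_j$ for $\theta\in\{0,1\}^M$, with amplitude $\delta$ to be tuned. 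Since any non-negative function integrating to $1$ is the density of a bona fide survival function, the validity of every hypothesis reduces to the single requirement $f_\theta\ge 0$, which holds once $\delta\norm{\psi}_\infty$ is small relative to the lower bound on $f_0$. The assumption $s,\gamma\in\IN$ is used here to produce explicit $\psi$ with the requisite smoothness and vanishing moments, and the bounded support of $g$ (assumption \textbf{[G2]}) keeps $f_{Y,0}=f_0*g$ bounded below on the compact set carrying the perturbations.

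\emph{Three balances.} Using the Plancherel identity \reff{eq:Mel:plan}, the survival relation $\sM_{1/2}[S_\theta](t)=(1/2+it)^{-1}\sM_{3/2}[f_\theta](t)$, and the convolution theorem $\sM_{3/2}[f_{Y,\theta}]=\sM_{3/2}[f_\theta]\,\sM_{3/2}[g]$, I would track how a perturbation of $\IL^2$-amplitude $a$ in $\sM_{1/2}[S]$ at frequency $k_n$ propagates. Ellipsoid membership costs a factor $(1+|t|)^s\asymp k_n^s$, so $|S_\theta|_s^2\asymp M\,k_n^{2s}a^2$, which must be $\le L$; the $\IL^2$-separation of well-separated hypercube vertices is $\asymp M a^2$; and the perturbation of the observed density carries Mellin amplitude $\asymp|1/2+ik_n|\,|\sM_{3/2}[g](k_n)|\,a\asymp k_n^{\,1-\gamma}a$, whence $\chi^2(f_{Y,\theta},f_{Y,0})\lesssim M\,k_n^{\,2-2\gamma}a^2$ via the lower bound on $f_{Y,0}$. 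Choosing $a^2\asymp n^{-1}k_n^{\,2\gamma-2}$ makes the ellipsoid constraint active precisely at $k_n\asymp n^{1/(2s+2\gamma-1)}$, yields an $\IL^2$-separation of order $n^{-2s/(2s+2\gamma-1)}$, and keeps the aggregated information $n\,\chi^2\lesssim M$, small enough (after fixing the numerical constant in $\delta$) for Fano's inequality to deliver the stated bound. One also checks that each $S_\theta$ satisfies the variance condition defining $\IW^s_{1/2}(L)$, which for the i.i.d.\ sample reduces to a bound on $\IE(X_{1,\theta})$ and is automatic for the compactly supported construction.

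\emph{Main obstacle.} The delicate point is to reconcile simultaneously the positivity of every $f_\theta$, membership in $\IW^s_{1/2}(L)$, and single-band high-frequency localisation, while the loss is measured on $S$ rather than on $f$. The extra factor $(1/2+it)^{-1}$ tilts the spectrum relating $S$ and $f$ and must be handled so that the $S$-level separation and the Mellin--Sobolev seminorm are both controlled at the common resolution $k_n$. Equally, controlling $\chi^2(f_{Y,\theta},f_{Y,0})$ hinges on a uniform lower bound for $f_{Y,0}$ on the support of the perturbations, which is precisely where the bounded support of $g$ in \textbf{[G2]} and the decay rate $\gamma$ enter; verifying these constraints jointly, rather than any single estimate in isolation, is the crux of the proof.
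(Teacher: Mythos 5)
Your overall skeleton---a hypercube of perturbations at the critical resolution $k_n\asymp n^{1/(2s+2\gamma-1)}$, Varshamov--Gilbert pruning, a Fano-type bound (Tsybakov, Theorem 2.5), and the three-way balance between ellipsoid membership, $\IL^2$-separation and $\chi^2$-information, whose arithmetic correctly reproduces $n^{-2s/(2s+2\gamma-1)}$---is exactly the paper's strategy. The gap is in the construction you build it on: you require the perturbations $\psi_j$ to have \emph{compactly supported Mellin transforms} (exact frequency-disjointness), and simultaneously argue as if they lived on a compact interval where $f_0$ is bounded below. These two demands are incompatible. Writing $u=\log x$ turns the Mellin transform into a Fourier transform, and by Paley--Wiener a nonzero function with smooth, compactly supported Mellin transform has the form $\psi_j(x)=x^{-c}\,\check H(\log x)$ with $\check H$ entire of exponential type; its support is all of $\IR_+$, and $\check H$ cannot decay exponentially on either half-line (otherwise $H$ would extend analytically to a strip and, being compactly supported, vanish identically). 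Hence $\psi_j$ decays only like $x^{-c}$ up to factors subexponential in $\log x$; in particular it is not in $\IL^1(\IR_+,x^0)$ (the tail at $0$ for $c=3/2$, respectively at $\infty$ for $c=1/2$, is non-integrable), so $f_\theta=f_0+\delta\sum_j\theta_j\psi_j$ cannot integrate to one, and positivity on all of $\IR_+$ fails for every $\delta>0$ against any base density $f_0$ with finite mean (finite mean being forced by the variance condition in the definition of $\IW^s_{1/2}(L)$). Your claim that positivity ``holds once $\delta\norm{\psi}_\infty$ is small relative to the lower bound on $f_0$'' silently assumes spatially compactly supported perturbations, which is precisely what frequency localisation forbids. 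The same incompatibility sinks your $\chi^2$ bound: it needs $f_{Y,0}$ bounded below on the support of $\psi_j*g$, i.e.\ on an unbounded set, which is impossible for an integrable function.

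The paper resolves all of this by localising in \emph{space}, not frequency: bumps $\psi_{k,K}(x)=\psi(xK-K-k)$ at scale $1/K$ with pairwise disjoint supports inside $[1,2]$, added to $S_o(x)=e^{-x}$. The $\IL^2$-separation then comes from spatial disjointness rather than frequency disjointness; membership in $\IW^s_{1/2}(L)$ is checked by integration by parts through the operator $\mathcal S[h](x)=-xh'(x)$, whose iterates $\psi_{k,K,j}=\mathcal S^j[\psi_{k,K}]$ also resolve the $(1/2+it)^{-1}$ tilt between $\sM_{1/2}[S]$ and $\sM_{3/2}[f]$ that you correctly identify as the crux but leave unresolved; and the Kullback--Leibler/$\chi^2$ bound exploits that $\widetilde f_{\bm\theta}-\widetilde f_o$ is supported in $[0,2]$ (perturbations in $[1,2]$, $g$ in $[0,1]$ by the support assumption accompanying \textbf{[G2]}) together with monotonicity of $\widetilde f_o$, so the uniform lower bound $\widetilde f_o(2)>0$ is available. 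If you replace your frequency-localised family by such spatially disjoint bumps, your balance computation goes through essentially verbatim; as written, however, the construction underlying your argument cannot be carried out.
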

For the multiplicative censoring model, that is, $U$ is uniformly-distributed on $[0,1]$ the assumptions \textbf{[G1]} and \textbf{[G2]} hold true. \\
Nevertheless, the rate presented in Proposition \ref{prop:upper:minimax} is very pessimistic, meaning that we can find examples of $f\in \IW_{1/2}^s(L)$ where the bias decays faster than $Lk^{-2s}$. These examples are considered in the next section.

\paragraph{Faster rates}

Let us revisit the families (iii)-(v) in Example \ref{exs:mel:well}.

\begin{exs}[Example \ref{exs:mel:well}, continued]
By application of the Stirling formula,
\begin{enumerate}
    \item the \textit{Gamma distribution} $f(x)= \frac{x^{d-1}}{\Gamma(d)} \exp(-x)\Ii_{\IR_+}(x)$, $d,x\in \IR_+$ delivers
    \begin{align*}
        |\sM_{1/2}[S](t)|=|(1/2+it)|^{-1} |\sM_{3/2}[f](t)| \leq C_d |t|^{d-1} \exp(-\pi|t|/2), \text{ for } |t|\geq 2;
    \end{align*}
    \item the \textit{Weibull distribution} $f(x)= m x^{m-1}\exp(-x^m)\Ii_{\IR_+}(x)$, $m,x\in \IR_+$, delivers
    \begin{align*}
        |\sM_{1/2}[S](t)|=|(1/2+it)|^{-1} |\sM_{3/2}[f](t)| \leq C_m |t|^{(1-m)/2m} \exp(-\pi|t|/(2m)), \text{ for } |t|\geq 2;
    \end{align*}
    \item the \textit{Log-normal distribution} $f(x)= (2\pi\lambda^2x^2)^{-1/2} \exp(-(\log(x)-\mu)/2\lambda^2)\Ii_{\IR_+}(x)$, $\lambda,x\in \IR_+$ and $\mu \in \IR$ delivers
    \begin{align*}
        |\sM_{1/2}[S](t)|=|(1/2+it)|^{-1} |\sM_{3/2}[f](t)| \leq C_{\mu,\lambda} |t|^{-1}\exp(-\lambda^2t^2/2), \text{ for } |t|\geq 1.
    \end{align*}
\end{enumerate}
\end{exs}
In all three cases, we can bound the bias term by $\|S-S_k\|\leq C \exp(-\delta k^r)$ for some $\delta, r\in \IR_+$ leading to a much sharper bound than $L k^{-2s},$ although it is easy to verify that all three examples lie in $\IW^s_{1/2}(L)$ for any $s\in \IR_+$ and $L\in \IR_+$ large enough. For example, in the case \textbf{(I)} the choice of $k_n=n^{1/(2s+2\gamma-1)}$ which is suggested in Proposition \ref{prop:upper:minimax} can be improved for any choice of $s\in \IR_+$. Setting $k_n=(\log(n)\delta^{-1})^{1/r}$ leads to 
\begin{align*}
    \IE(\|\widehat S_{k_n}-S\|^2) \leq C(f,g) \frac{\log(n)^{(2\gamma-1)/r}}{n} 
\end{align*}
which results in a sharper rate then $n^{-2s/(2s+2\gamma-1)}$.\\
Furthermore, despite the fact that the choice of $k_n$ in Proposition \ref{prop:upper:minimax} is not dependent on the explicit density $f\in \IW^s_{1/2}(L)$, it is still dependent on the regularity parameter $s\in \IR_+$ of the unknown density $f$ which is unknown, too. While it is tempting to set the regularity parameter $s\in \IR_+$ to a fixed value and interpret this as an additional model assumption, the discussion above motivates that this might deliver worse rates. In the next section, we therefore present a data-driven method in order to choose the parameter $k\in \IR_+$ based only on the sample $Y_1, \dots, Y_n$. 

\section{Data-driven method}
\label{sec_datadriven}

We now present a data-driven method based on a penalized contrast approach. In fact, we consider the case where \textbf{[G1]} holds with $\gamma>1/2$, since in the case $\gamma\leq 1/2$ we already presented a choice of the parameter $k\in \IR_+$, independent on the density $f$, which achieves an almost parametric rate. In the case $\gamma>1/2$, the second summand in Theorem \ref{thm:upp_bound} dominates the third term. Thus, the growth of the variance term is determined by the growth of $\Delta_g$.
Our aim is now to define an estimator $\widehat k_n$ which mimics the behavior of 
\begin{align*}
    k_n:\in\argmin\{\|S-S_k\|^2 + \IE(Y)C_g(2\pi n)^{-1} k^{2\gamma-1} : k \in \mathcal K_n\}
\end{align*} for a suitable large set of parameters $\mathcal K_n\subset \IR_+$. 
Considering the result of Proposition \ref{prop:upper:minimax}, and the fact that $\|S-S_k\|^2\leq k^{-1} \IE(X^{1/2})^2$, which we have seen in the paragraph about the parametric case, we can ensure that the set $\mathcal K_n:=\{k\in \{1,\dots, n\}: \Delta_g(k)\leq n^{-1}\}$ is suitably large enough. 
Starting with the bias term we see that $\|S-S_k\|^2 = \|S\|^2 - \|S_k \|^2$ behaves like  $-\|S_k\|^2$. Furthermore, for $k\in \mathcal K_n$ we define the penalty term $\mathrm{pen}(k)=\chi\sigma_Y \Delta_g(k)n^{-1},$ $\sigma_Y :=\IE(Y_1), $ which shall mimic the behavior of the variance term. Exchanging $-\|S_k\|^2$ and $\IE(Y_1)$ with their empirical counterparts $-\|\widehat S_k\|^2$ and $\widehat \sigma_Y:= n^{-1} \sum_{j=1}^n Y_j$ we define a fully data-driven model selection $\widehat k$  by   
\begin{align}\label{eq:data:driven}
\widehat k \in \argmin \{-\|\widehat S_k\|^2 + \widehat{\mathrm{pen}}(k) : k \in \mathcal K_n\} \quad \text{where} \quad\widehat{\mathrm{pen}}(k):= 2\chi \widehat \sigma_Y \Delta_g(k) n^{-1}
\end{align}
for $\chi>0.$
The following theorem shows that this procedure is adaptive up to a negligeable term.

\begin{thm}\label{dd:thm:ada}
	Let $g$ satisfy \textbf{[G1]} with $\gamma>1/2$ and $\|xg\|_{\infty}<\infty$. Assume further that $\IE(Y_1^{5/2})<\infty$. Then for $\chi >  96$,
	\begin{align*}
	\IE (	\| S- \widehat S_{\widehat k} \|^2) &\leq 6\inf_{k\in\mathcal K_n}\big(\|S-S_k\|^2 +\mathrm{pen}(k) \big) +  C(g,f)\left(n^{-1} +\Var(\widehat\sigma_X)+ \int_{-n}^n \Var(\sM_{1/2}[\widehat S_X](t))dt\right)
	\end{align*}
	where $C(g,f)>0$ is a constant depending on $\chi$, the error density $g$, $\IE(X_1^{5/2})$, $\sigma_X:=\IE(X_1)$ and $\widehat \sigma_X:= n^{-1}\sum_{j=1}^n X_j$. 
\end{thm}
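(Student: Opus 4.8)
The plan is to treat this as a penalized-contrast oracle inequality and to follow the Barron--Birg\'e--Massart template, adapted to the Mellin domain and exploiting that, conditionally on the process $(X_j)$, the errors $U_j$ are i.i.d. Write $\widehat\theta(t):=\widehat{\sM}(t)\big/\big((1/2+it)\sM_{3/2}[g](t)\big)$, so that $\widehat S_k=\sM_{1/2}^{-1}[\widehat\theta\,\Ii_{[-k,k]}]$, and let $\mathcal S_k:=\{h\in\IL^2(\IR_+,x^0):\sM_{1/2}[h]=\sM_{1/2}[h]\Ii_{[-k,k]}\}$ be the model attached to the cut-off $k$. Introducing the contrast $\gamma_n(h):=\norm{h}^2-2\langle h,\widehat\Theta\rangle$ with $\langle h,\widehat\Theta\rangle:=(2\pi)^{-1}\langle\sM_{1/2}[h],\widehat\theta\rangle_{\IR}$ (well defined on each $\mathcal S_k$ thanks to \textbf{[G0]}), one checks $\gamma_n(\widehat S_k)=-\norm{\widehat S_k}^2$, so the selection rule \reff{eq:data:driven} reads $\widehat k\in\argmin_{k\in\mathcal K_n}\{\gamma_n(\widehat S_k)+\widehat{\mathrm{pen}}(k)\}$. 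For arbitrary fixed $k\in\mathcal K_n$ this gives $\gamma_n(\widehat S_{\widehat k})+\widehat{\mathrm{pen}}(\widehat k)\le\gamma_n(\widehat S_k)+\widehat{\mathrm{pen}}(k)$, and the polarization identity $\gamma_n(h)-\gamma_n(S)=\norm{h-S}^2-2\nu_n(h-S)$, with the linear centered functional $\nu_n(h):=\langle h,\widehat\Theta\rangle-\langle h,S\rangle$, yields
\begin{align*}
\norm{\widehat S_{\widehat k}-S}^2\le\norm{\widehat S_k-S}^2+2\nu_n(\widehat S_{\widehat k}-\widehat S_k)+\widehat{\mathrm{pen}}(k)-\widehat{\mathrm{pen}}(\widehat k).
\end{align*}

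The core is the control of $\nu_n$. By Plancherel \reff{eq:Mel:plan}, $\nu_n(h)=(2\pi)^{-1}\int\sM_{1/2}[h](t)\overline{(\widehat\theta(t)-\sM_{1/2}[S](t))}\,dt$, and the crucial step is to split $\widehat\theta-\sM_{1/2}[S]$ into a \emph{$U$-fluctuation} $\widehat\theta-\sM_{1/2}[\widehat S_X]$ and an \emph{$X$-fluctuation} $\sM_{1/2}[\widehat S_X]-\sM_{1/2}[S]$, inducing $\nu_n=\nu_n^{U}+\nu_n^{X}$. Using \reff{eq:mel:surv} and $Y_j=X_jU_j$ one gets $\widehat\theta(t)-\sM_{1/2}[\widehat S_X](t)=(1/2+it)^{-1}n^{-1}\sum_j X_j^{1/2+it}\big(U_j^{1/2+it}/\sM_{3/2}[g](t)-1\big)$, which conditionally on $(X_j)$ is a sum of independent centered terms. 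Since $\IE[\sM_{1/2}[\widehat S_X](t)]=\sM_{1/2}[S](t)$, the $X$-part is handled crudely by Cauchy--Schwarz on $[-n,n]$, Plancherel and Young's inequality, producing the remainder $\int_{-n}^n\Var(\sM_{1/2}[\widehat S_X](t))\,dt$. For the $U$-part I would bound $\nu_n^{U}(\widehat S_{\widehat k}-\widehat S_k)\le\norm{\widehat S_{\widehat k}-\widehat S_k}\,N_n(\widehat k\vee k)$ with $N_n(m):=\sup_{h\in\mathcal S_m,\norm h\le1}\nu_n^U(h)=\big((2\pi)^{-1}\int_{-m}^m|\widehat\theta(t)-\sM_{1/2}[\widehat S_X](t)|^2dt\big)^{1/2}$, then apply $2ab\le\tfrac14a^2+4b^2$ together with $\norm{\widehat S_{\widehat k}-\widehat S_k}^2\le2\norm{\widehat S_{\widehat k}-S}^2+2\norm{\widehat S_k-S}^2$ to absorb a fixed fraction of $\norm{\widehat S_{\widehat k}-S}^2$ into the left-hand side.

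The heart of the argument, and the main obstacle, is to show that the penalty dominates $N_n(m)^2$ uniformly over the finite grid $m\in\mathcal K_n$, i.e. $\IE\big[\sup_{m\in\mathcal K_n}\big(c\,N_n(m)^2-p(m)\big)_+\big]\le C(g,f)\,n^{-1}$ for some $p(m)$ comparable to $\mathrm{pen}(m)$. Conditionally on $(X_j)$ the errors are i.i.d., so $\IE[N_n(m)^2\mid(X_j)]$ equals, up to $\Var(U_1^{1/2+it}/\sM_{3/2}[g](t))\le\IE(U_1)|\sM_{3/2}[g](t)|^{-2}$, the quantity $\IE(U_1)\,\widehat\sigma_X\,(2\pi)^{-1}n^{-1}\int_{-m}^m|(1/2+it)\sM_{3/2}[g](t)|^{-2}dt=\IE(U_1)\widehat\sigma_X\Delta_g(m)n^{-1}$, which matches $\mathrm{pen}(m)=\chi\sigma_Y\Delta_g(m)n^{-1}$ up to the factor $\chi$ and the replacement of $\sigma_Y$ by $\IE(U_1)\widehat\sigma_X$. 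I would then apply a Talagrand-type concentration inequality to the quadratic functional $N_n(m)^2$ (equivalently to the supremum $N_n(m)$) \emph{conditionally} on $(X_j)$, where genuine independence is available; here $\|xg\|_\infty<\infty$ serves to bound the sup-norm constant through $\sup_{t\in[-m,m]}|(1/2+it)\sM_{3/2}[g](t)|^{-1}\lesssim m^\gamma$, and $\IE(Y_1^{5/2})<\infty$ controls the weak-variance constant and the subsequent expectation over $(X_j)$. Summing the resulting exponential tails over $\mathcal K_n\subset\{1,\dots,n\}$ then gives the $n^{-1}$ remainder; the threshold $\chi>96$ is precisely what renders the conditional mean, inflated by the $\tfrac14$-Young and the triangle constants, absorbable.

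Finally I would pass from $\widehat{\mathrm{pen}}$ to $\mathrm{pen}$. Since $\widehat{\mathrm{pen}}(k)=2\chi\widehat\sigma_Y\Delta_g(k)n^{-1}$ and $\mathrm{pen}(k)=\chi\sigma_Y\Delta_g(k)n^{-1}$, on the event $\{|\widehat\sigma_Y-\sigma_Y|\le\tfrac12\sigma_Y\}$ one has $\mathrm{pen}\le\widehat{\mathrm{pen}}\le3\mathrm{pen}$, so $\widehat{\mathrm{pen}}(k)-\widehat{\mathrm{pen}}(\widehat k)\le3\mathrm{pen}(k)-\mathrm{pen}(\widehat k)$, the negative term cancelling the $p(\widehat k)$ produced above; off this event the moment assumption $\IE(Y_1^{5/2})<\infty$ bounds the contribution by $O(n^{-1})$. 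The fluctuation of $\widehat\sigma_Y$ is split by the law of total variance: $\IE[\widehat\sigma_Y\mid(X_j)]=\IE(U_1)\widehat\sigma_X$ yields the $\Var(\widehat\sigma_X)$ remainder, while the conditional variance, a sum of independent terms, contributes $O(n^{-1})$. Collecting the two Young absorptions, the triangle constants and these three remainder sources produces the factor $6$ and the asserted bound; tracking the exact numerical constants is routine and I would defer it to the appendix.
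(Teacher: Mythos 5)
Your outline reproduces the architecture of the paper's proof: the penalized-contrast start with $\gamma_n$, the splitting of the centered empirical process into a part that is centered conditionally on $(X_j)$ (your $U$-fluctuation, the paper's $\bar\nu_{h,in}$) and an $X$-fluctuation part (the paper's $\bar\nu_{h,de}$), Cauchy--Schwarz for the latter yielding exactly the remainder $\int_{-n}^n\Var(\sM_{1/2}[\widehat S_X](t))\,dt$, a Talagrand bound conditionally on $(X_j)$ for the former, and the comparison of $\widehat{\mathrm{pen}}$ with $\mathrm{pen}$ via concentration of $\widehat\sigma_Y$ together with the law of total variance producing the $\Var(\widehat\sigma_X)$ term. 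The oracle algebra, the Young absorption of a fraction of $\|\widehat S_{\widehat k}-S\|^2$ into the left-hand side, and the role of $\chi>96$ are all as in the paper.

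There is, however, one concrete gap, located exactly at the step you yourself call the heart of the argument. You propose to apply Talagrand's inequality conditionally on $(X_j)$ to the class $\{\nu_h:h\in B_m\}$, claiming the sup-norm constant is controlled through $\sup_{t\in[-m,m]}|(1/2+it)\sM_{3/2}[g](t)|^{-1}\lesssim m^{\gamma}$ and $\|xg\|_\infty<\infty$. This cannot work as stated: $\nu_h(y)=(2\pi)^{-1}\int_{-m}^{m}y^{1/2+it}\,\sM_{1/2}[h](-t)\,\big((1/2+it)\sM_{3/2}[g](t)\big)^{-1}dt$ carries the factor $y^{1/2}$, so $\sup_{h\in B_m}\sup_{y>0}|\nu_h(y)|$ is infinite no matter how the Mellin multiplier is bounded, and Talagrand's inequality (Lemma \ref{tal:re}, which requires a finite uniform bound $\psi$) does not apply to this class, conditionally or otherwise. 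The paper repairs this by truncation: it splits $\bar\nu_{h,in}=\bar\nu_{h,1}+\bar\nu_{h,2}$ using the indicator $\Ii_{(0,c_n)}(Y_j^{1/2})$ with $c_n\asymp\sqrt{\sigma_U\widehat\sigma_X n}/\log n$; Talagrand is applied to the bounded part with $\psi=c_n\Delta_g(k)^{1/2}$, while $\|xg\|_\infty$ enters the \emph{variance} constant $\tau$ (via $\Var_{|X}(\nu_h(Y_j)\Ii_{(0,c_n)}(Y_j^{1/2}))\le X_j\|xg\|_\infty\|G_k\|_\infty$), not the sup-norm constant; and the untruncated tail $\bar\nu_{h,2}$ is where $\IE(Y_1^{5/2})<\infty$ is actually consumed, through a Markov-type bound with exponent $p=3$. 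So the two assumptions you invoke do real work, but in places opposite to where you assign them, and without the truncation-plus-tail device the concentration step as you describe it fails. The remainder of your outline (grid summation of the exponential tails, and the treatment of $(\mathrm{pen}(\widehat k)-\widehat{\mathrm{pen}}(\widehat k))_+$ via $\IE(\widehat{\mathrm{pen}}(k))=2\,\mathrm{pen}(k)$ and Markov) matches the paper's Lemmata and would go through once this step is repaired.
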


The proof of Theorem \ref{dd:thm:ada} is postponed to Appendix. The assumption that $\|xg\|_{\infty}<\infty$ is rather weak because it is satisfied for the common densities considered. 

\begin{cor} \label{thm:upper_bound_adap}
	Let the assumptions of Theorem \ref{dd:thm:ada} hold. Then,
	\begin{enumerate}
	    \item[\textbf{(I)}] in the presence of i.i.d observations $X_1,...,X_n$ and $\IE[X_1^2] < \infty$,
	    \[
	        \IE (	\| S- \widehat S_{\widehat k} \|^2) \leq 6\inf_{k\in\mathcal K_n}\big(\|S-S_k\|^2 +\mathrm{pen}(k) \big) +  \frac{C(g,f)}{n}\left(1 +\IE[X_1^2]+\IE|X_1|\right);
	    \]
	    \item[\textbf{(B)}] for $\beta$-mixing observations $X_1,...,X_n$ under the additional assumptions that $\IE[X_1^2b(X_1)] < \infty$,
	    \[
	        \IE (	\| S- \widehat S_{\widehat k} \|^2) \leq 6\inf_{k\in\mathcal K_n}\big(\|S-S_k\|^2 +\mathrm{pen}(k) \big) +  \frac{C(g,f)}{n}\left(1 +\IE[X_1^2b(X_1)]+\IE[|X_1|b(X_1)]\right);
	   \]
	    \item[\textbf{(F)}] and for Bernoulli-shift processes \reff{def:bernoulli-shift} under the dependence measure \reff{def:functional_dependence_measure} provided that also $\sum_{j=1}^n\delta_1^X(j)^{1/2} < \infty$ and $\sum_{j=1}^n\delta_2^X(j) < \infty$,
	    \[
	        \IE (	\| S- \widehat S_{\widehat k} \|^2) \leq 6\inf_{k\in\mathcal K_n}\big(\|S-S_k\|^2 +\mathrm{pen}(k) \big) +  \frac{C(g,f)\log(n)}{n}\left(1 +\Big(\sum_{j=1}^n\delta_2^X(j)\Big)^2+\Big(\sum_{j=1}^n\delta_1^X(j)^{1/2}\Big)^2\right).
	   \]
	\end{enumerate}
\end{cor}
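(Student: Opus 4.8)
The plan is to start from the bound of Theorem \ref{dd:thm:ada}, which already splits the risk into the adaptive part $6\inf_{k\in\mathcal K_n}(\|S-S_k\|^2 + \mathrm{pen}(k))$ and a remainder
\[
C(g,f)\Big(n^{-1} + \Var(\widehat\sigma_X) + \int_{-n}^n \Var(\sM_{1/2}[\widehat S_X](t))\,dt\Big).
\]
Since the infimum term is carried verbatim into the Corollary, the whole task reduces to bounding the two variance-type quantities $\Var(\widehat\sigma_X)$ and $\int_{-n}^n \Var(\sM_{1/2}[\widehat S_X](t))\,dt$ in each of the three dependency regimes, and then collecting the resulting $n^{-1}$ (respectively $\log(n)\,n^{-1}$) contributions into the stated constants.

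For the integral term I would simply reuse the work already done for the consistency statement. Indeed, $\frac{1}{2\pi}\int_{-k}^k \Var(\sM_{1/2}[\widehat S_X](t))\,dt$ is exactly the variance summand bounded in Corollary \ref{cor:consis_dep}; evaluating those bounds at $k=n$ yields $\int_{-n}^n \Var(\sM_{1/2}[\widehat S_X](t))\,dt \leq C n^{-1}\IE(X_1)$ in case \textbf{(I)}, $\leq C n^{-1}\IE(X_1 b(X_1))$ in case \textbf{(B)}, and $\leq C\log(n)\,n^{-1}\big(\sum_{j\geq 1}\delta_1^X(j)^{1/2}\big)^2$ in case \textbf{(F)}. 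The structural fact making this clean is $|X_1^{1/2+it}|^2 = X_1$, so that the $t$-dependence enters only through the integrable factor $|1/2+it|^{-2}$: its integral over $\IR$ is finite in the i.i.d. and $\beta$-mixing cases, while in case \textbf{(F)} the slowly growing $\log$ factor appears because the functional-dependence bound is only integrated up to $t=\pm n$.

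The genuinely new ingredient is $\Var(\widehat\sigma_X)$, the variance of the empirical mean $\widehat\sigma_X = n^{-1}\sum_{j=1}^n X_j$, which is the analogue of the integral term for the identity statistic $h(x)=x$ rather than $x^{1/2+it}$. In case \textbf{(I)} this is immediate: $\Var(\widehat\sigma_X) = n^{-1}\Var(X_1) \leq n^{-1}\IE(X_1^2)$. In case \textbf{(B)} I would invoke the Viennet--Rio covariance inequality, which produces the envelope $b$ with $\IE(b(X_1))\leq \sum_{k\geq 0}\beta(k)$ and the bound $\Var\big(\sum_{j=1}^n h(X_j)\big) \leq C n\,\IE(b(X_1)h(X_1)^2)$; applied to $h(x)=x$ this gives $\Var(\widehat\sigma_X)\leq C n^{-1}\IE(X_1^2 b(X_1))$. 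In case \textbf{(F)} I would use the $\IL^2$-bound for Bernoulli-shift processes in terms of the functional dependence measure, $\Var\big(n^{-1}\sum_{j=1}^n X_j\big)\leq C n^{-1}\big(\sum_{k\geq 0}\delta_2^X(k)\big)^2$, which applies directly because $h(x)=x$ satisfies $\delta_2^{h(X)}(k)=\delta_2^X(k)$. This is precisely where the second-moment hypotheses $\IE(X_1^2)$, $\IE(X_1^2 b(X_1))$ and $\sum_j \delta_2^X(j)<\infty$ enter the three statements.

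Combining the two variance bounds with the leading $n^{-1}$ term and absorbing numerical constants into $C(g,f)$ yields the three claimed estimates: in cases \textbf{(I)} and \textbf{(B)} all contributions are of order $n^{-1}$, while in case \textbf{(F)} the $\log(n)$ coming from the integral term dominates and multiplies the whole bracket. The main obstacle is not the combination step, which is pure bookkeeping, but securing the two dependent-case covariance inequalities with the exact envelope/measure on the right-hand side --- in particular confirming that the Viennet-type function $b$ here coincides with the one used in Corollary \ref{cor:consis_dep}, and that the functional-dependence variance inequality is genuinely applicable to the unbounded identity statistic under the stated summability of $\delta_2^X$. Once those are in place, the $\Var(\widehat\sigma_X)$ estimates follow by specialising $h(x)=x$, mirroring the $h(x)=x^{1/2+it}$ computation that already underlies Corollary \ref{cor:consis_dep}.
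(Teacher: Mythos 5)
Your proposal is correct and follows essentially the same route as the paper's own proof: it reduces the Corollary to bounding $\Var(\widehat\sigma_X)$ and the integral term from Theorem \ref{dd:thm:ada}, reuses the bounds of Corollary \ref{cor:consis_dep} for the integral, and handles $\Var(\widehat\sigma_X)$ case by case via the trivial i.i.d.\ bound, the Viennet-type inequality (Lemma \ref{lem:beta_mix_var_bound}) with $h=\mathrm{id}$, and the functional-dependence variance bound (Lemma \ref{lem:transfer_dependency}) specialised to the identity map, exactly as the paper does. Your closing concern is also resolved the same way the paper resolves it: for $h=\mathrm{id}$ the Hölder step is unnecessary since $\delta_2^{h(X)}(k)=\delta_2^X(k)$, so only the projection decomposition of the lemma is needed.
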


Under the assumptions of Theorem \ref{dd:thm:ada} and for the case $S\in \IW^{s}_{1/2}(L)$, we can ensure that $s>1/2$. Then we have $k_n:= \lfloor n^{1/(2s+2\gamma-1}\rfloor \in \mathcal K_n$ and thus
\begin{align*}
    \IE (	\| S- \widehat S_{\widehat k} \|^2) \leq C(f,g)n^{-2s/(2s+2\gamma-1)}
\end{align*}
for all three cases \textbf{(I)}, \textbf{(B)} and \textbf{(F)}.

\section{Numerical studies}
\label{sec_numerical}

In this section, we use Monte-Carlo simulation to visualise the properties of the estimator $\widehat S_{\widehat k}$. We will first consider the case of independent observations, \textbf{[I]}, and will then go on by study the behaviour of the estimator in presence of dependence. More precisely, we will simulate data following a functional dependency, that is \textbf{[F]}.

\subsection{Independent data}
Let us illustrate the performance of the fully-data driven estimator $\widehat S_{\widehat k}$ defined in \reff{eq:estim:def} and \reff{eq:data:driven}. To do so, we consider the following densities whose corresponding survival function will be estimated.
    \begin{enumerate}[(i)]
        \item \textit{Gamma distribution:} $f_1(x)=\frac{0.5^4}{\Gamma(4)}x^{3}\exp(-0.5x)\Ii_{\IR_+}(x)$,
        \item \textit{Weibull distribution:} $f_2(x)=2x\exp(-x^2)\Ii_{\IR_+}(x)$,
        \item \textit{Beta distribution:} $f_3(x)=\frac{1}{560}(0.5x)^3(1-0.5x)^4\Ii_{(0, 2)}(x)$ and 
        \item \textit{Log-Gamma distribution:} 
        $f_4(x)=\frac{x^{-4}}{6} \log(x)^3\Ii_{(1, \infty)}(x)$.
    \end{enumerate}
    For the distribution of the error density, we consider the following three cases
    \begin{enumerate}[(a)]
        \item \textit{Uniform distribution:} $g_1(x)=\Ii_{[0,1]}(x)$,
        \item \textit{Symmetric noise:} $g_2(x)=\Ii_{(1/2, 3/2)}(x)$ and
        \item \textit{Beta distribution:} $g_3(x)=2(1-x) \Ii_{(0,1)}(x)$.
    \end{enumerate}
    We see that $g_1$ and $g_2$ fulfill \textbf{[G1]} with the parameter $\gamma=1$ and $g_3$ fulfills it with $\gamma= 2$.
    Due to the fact that for the true survival function holds $S(x)\in [0,1], x\in \IR,$ we can improve the estimator $\widehat S_{\widehat k}$ by defining
    \begin{align*}
        \widetilde S_{\widehat k}(x):= \begin{cases} 0&, \widehat S_{\widehat k}(x)\leq 0;\\
        \widehat S_{\widehat k}(x) &, \widehat S_{\widehat k}(x) \in [0,1]; \\
        1&, \widehat S_{\widehat k}(x) \geq 1.
        \end{cases}
    \end{align*}
    The resulting estimator $\widetilde S_{\widehat k}$ has a smaller risk then $\widehat S_{\widehat k}$, since $\| \widetilde S_{\widehat k}-S\|^2 \leq \|\widehat S_{\widehat k}-S\|$. On the other hand, the estimator has the desired property that it is $[0,1]$-valued. Nevertheless, it is difficult ensure that $\widetilde S_{\widehat k}(0)=1$ and that $\widetilde S_{\widehat k}$ is monotone decreasing.
    Although there are many procedures to guarantee the monotonicity of an estimator $\widehat S$ and the property $\widehat S(0)=1$, the presented theoretical results of this work are not applicable to the modified estimators. \\
    We will now use a Monte-Carlo simulation to visualize the properties of the estimator $\widetilde S_{\widetilde k}$ and discuss whether the numerical simulated behavior of the estimator coincides with the theoretical predictions. \\
    After that we will construct a survival estimator $\widehat{S}$ based on $\widehat S_{\widehat k}$ which is in fact a survival function, keeping in mind that the theoretical results of this work do not apply for this estimator.

\begin{figure}[ht]
\centering
\begin{tabular}{@{}cc||c|c|c@{}}
			&& $n=500$ & $n=1000$ & $n=2000$\\\midrule\midrule
			&(i)  & $1.31$&$0.75$ &$0.23$\\
			& (ii)& $0.19$ & $0.09$ & $0.04$\\
			&(iii)& $0.21$& $0.12$& $0.06$\\
			&(iv)& $1.10$& $0.34$ &$0.20$\\\bottomrule
	\end{tabular}\vspace*{0.2cm}
	\caption{The entries showcase the MISE (scaled by a factor of 100) obtained by Monte-Carlo simulations each with 200 iterations. We take a look at different densities and varying sample size. The error density is chosen as (a) in each case.}
\label{table:1}
\end{figure}

    In Figure \ref{table:1} we can see that for an increasing sample size, the variance of the estimator seems to decrease. Also,, increasing the sample size allows for the estimator to choose bigger values for $\widehat k$, which on the other hand decreases the bias induced by the approximation step. Next let us consider different error densities.
    
       \begin{minipage}{\textwidth}
\centering{\begin{minipage}[t]{0.32\textwidth}
		\includegraphics[width=\textwidth,height=60mm]{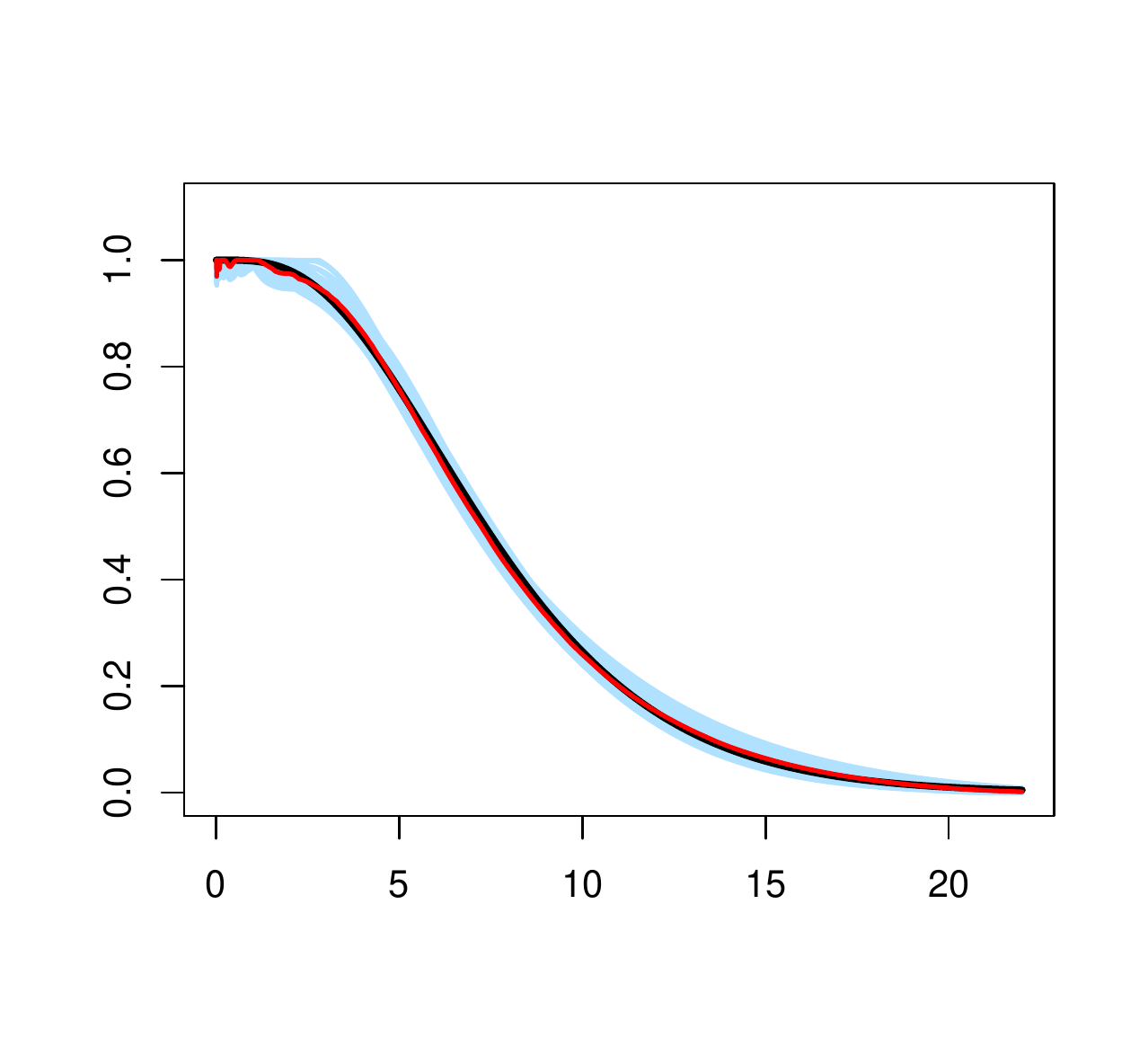}
	\end{minipage}
	\begin{minipage}[t]{0.32\textwidth}
		\includegraphics[width=\textwidth,height=60mm]{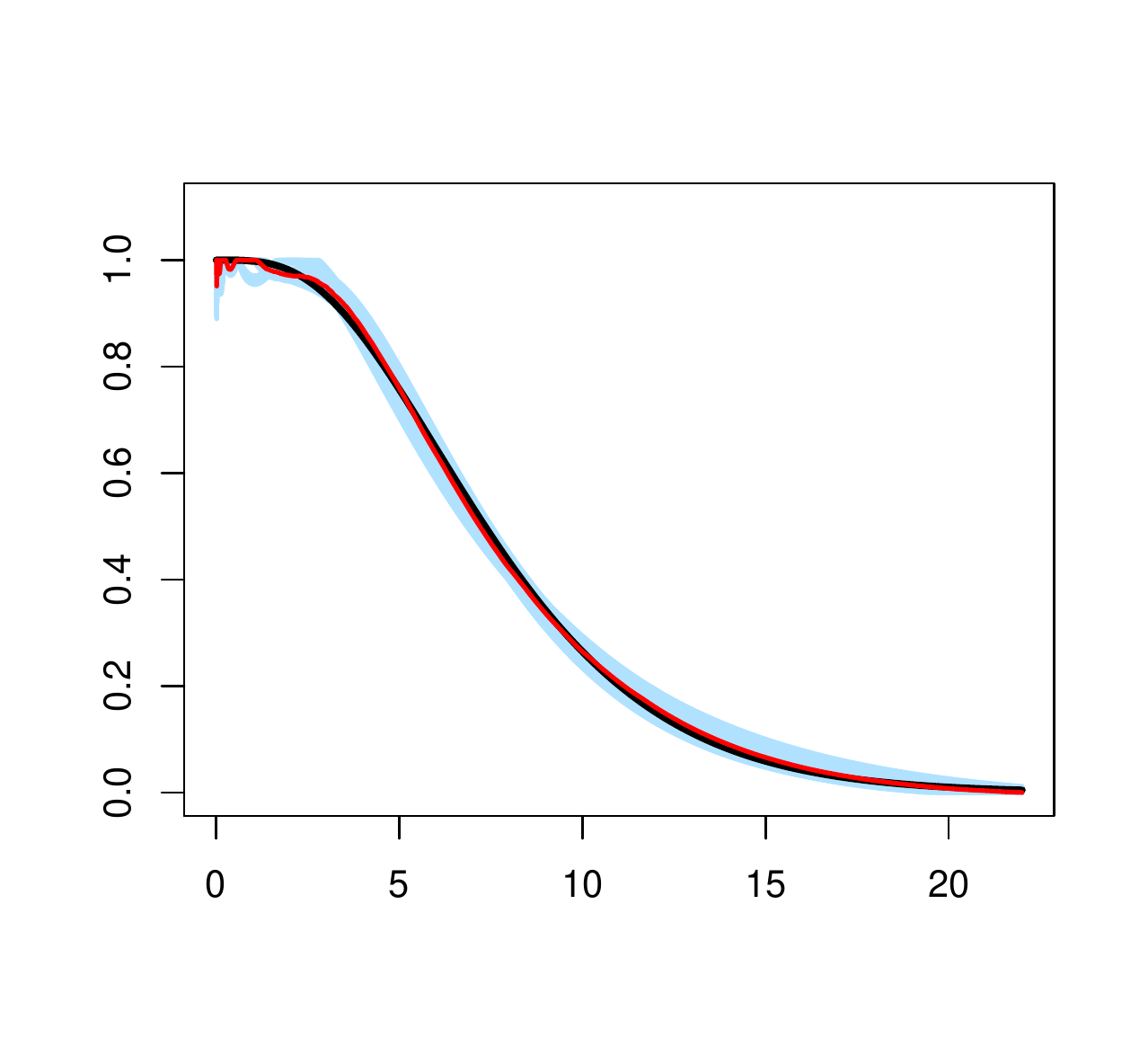}
	\end{minipage}
	\begin{minipage}[t]{0.32\textwidth}
		\includegraphics[width=\textwidth,height=60mm]{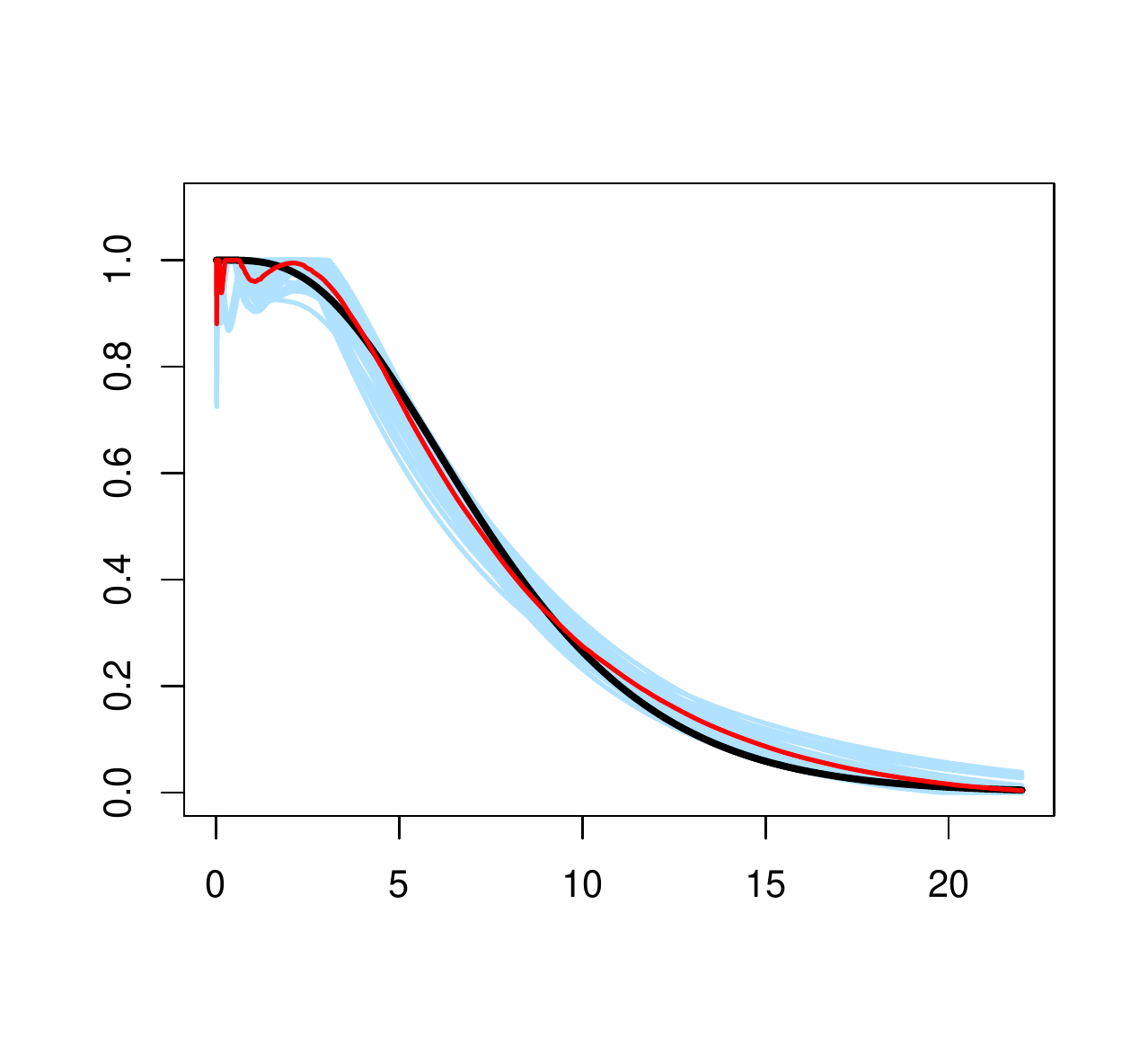}
	\end{minipage}}
	
\centering{\begin{minipage}[t]{0.32\textwidth}
		\includegraphics[width=\textwidth,height=60mm]{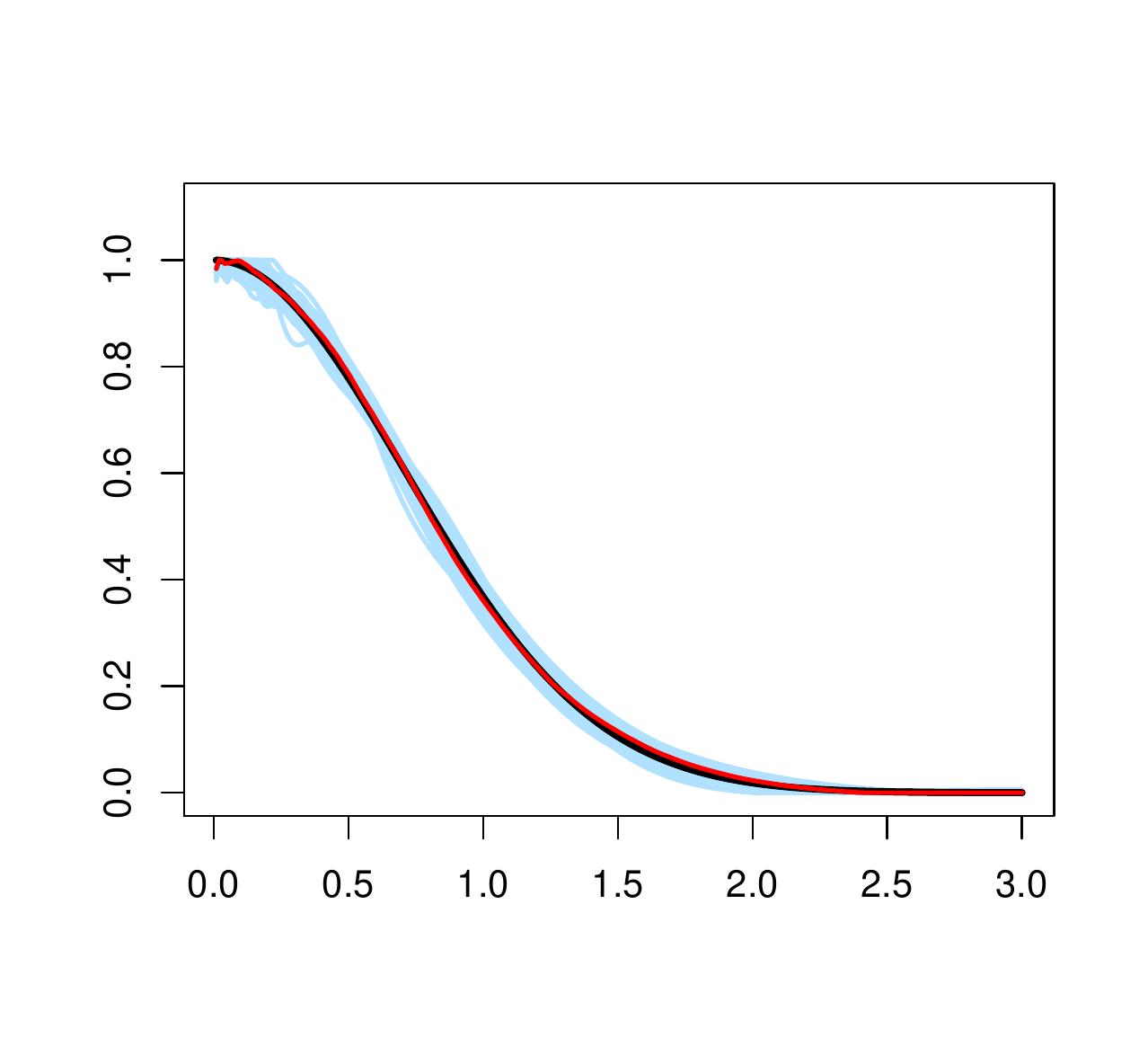}
	\end{minipage}
	\begin{minipage}[t]{0.32\textwidth}
		\includegraphics[width=\textwidth,height=60mm]{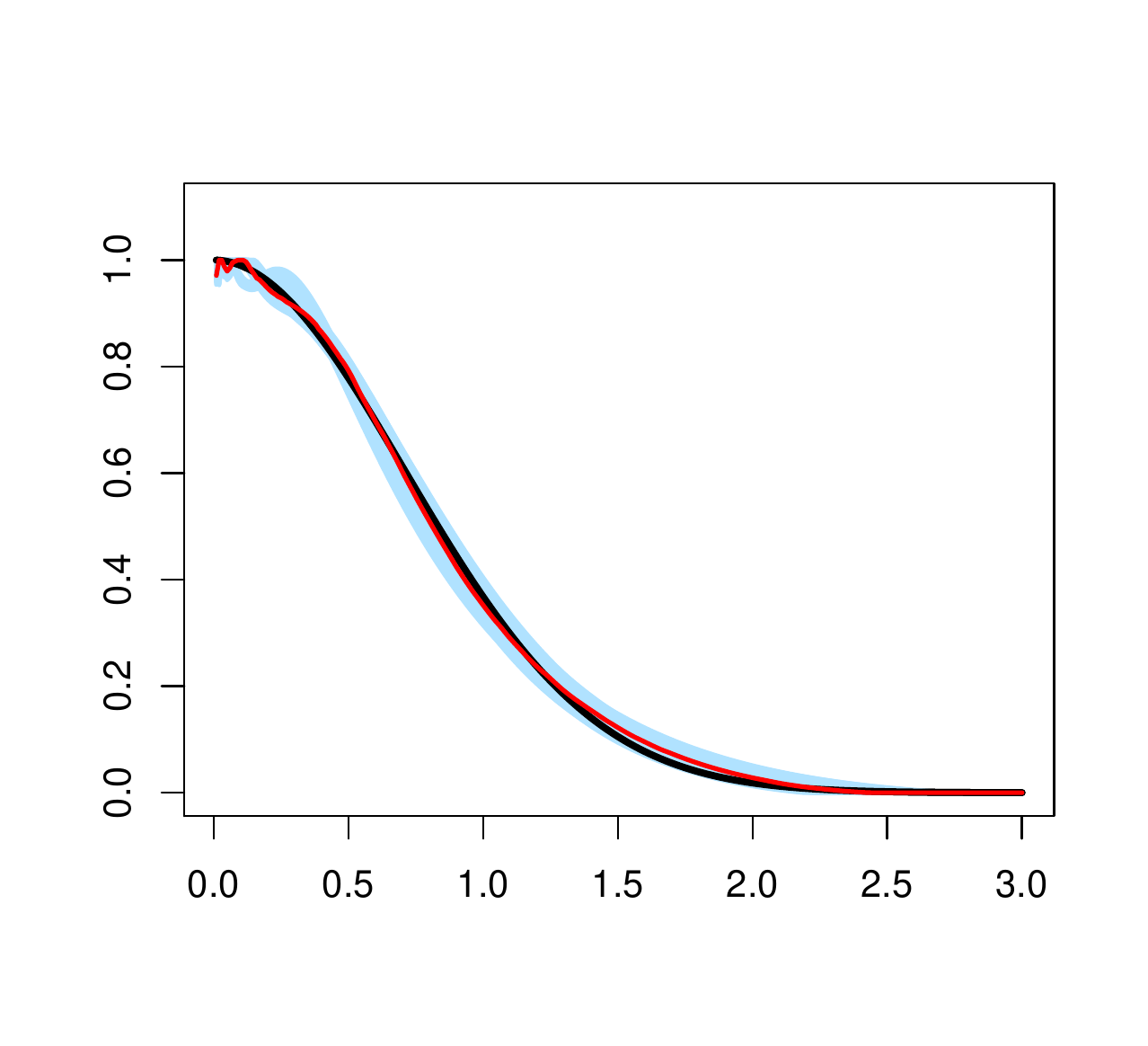}
	\end{minipage}
\begin{minipage}[t]{0.32\textwidth}
	\includegraphics[width=\textwidth,height=60mm]{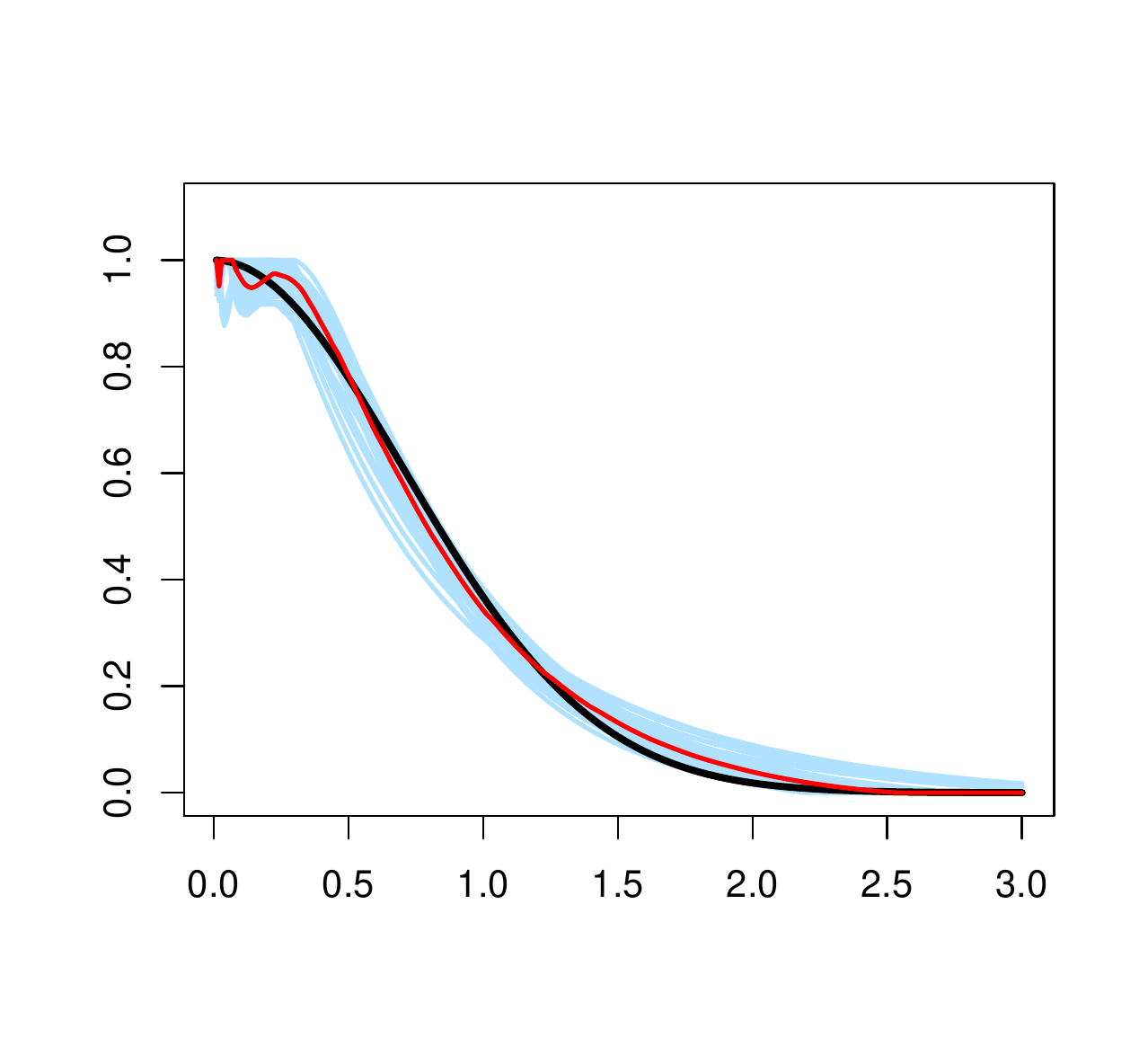}
\end{minipage}}
      \captionof{figure}{\label{figure:3}Considering 
        the estimators $\widetilde S_{\widehat k}$, we depict 
        50  Monte-Carlo simulations with varying error density (a) (left), (b) (middle) and (c) (right) for (i) (top), (ii) (bottom) with $n=1000$. The true survival function $ S$ is given by the black curve while the red curve is the point-wise empirical median of the 50 estimates.}
\end{minipage}\\[2ex]
In Figure \ref{figure:3} we can see that reconstruction of the survival function with error density (a) and (b) seem to be of same complexity while the reconstruction with error density (c) seems to be more difficult. This behavior is predicted by the theoretical results because for \textbf{[G1]}, (a) and (b) share the same parameter $\gamma=1$ while (c) has the parameter $\gamma=2$.

\paragraph{Heuristic estimator} We will now modify the estimator $\widehat S_{\widehat k}$ such that the resulting estimator $\widehat S$ is a survival function. To do so, we see that for any $k\in \IR$ and $x \in \IR_+$,

\begin{align*}
    \widehat S_k(x)=\frac{1}{2\pi} \int_{-k}^k x^{-1/2-it} \frac{\widehat{\sM}(t)}{\sM_{3/2}[g](t)}(1/2+it)^{-1} dt = (\widehat p_k * g_u)(x),
\end{align*}
where $g_u(x)=\Ii_{(0,1)}(x)$ the density of the uniform distribution on the intervall $(0,1)$ and $\widehat p_k(x)= (2\pi)^{-1} \int_{-k}^k x^{-1/2-it} \frac{\widehat \sM(t)}{\sM_{3/2}[g](t)} dt$. Exploiting the definition of the multiplicative convolution we see that for any $x \in \IR_+$,
\begin{align*}
    \widehat S_k(x)= \int_{x}^{\infty} \widehat p_k(y) y^{-1}dy.
\end{align*}
This motivates the following construction of the survival function estimator. First, exchanging $\widehat p_k $ with $(\widehat p_k(x))_+$ ensures the monotonicity and the positivity of our estimator. The final estimator is then defined as
\begin{align*}
    \widehat S(x):= \widetilde S(x)/\widetilde S(0+), \quad \text{where }\widetilde S(x):= \int_x^{\infty} (\widehat p_{\widehat k}(y))_+ y^{-1}dy, \text{ for any } y\in \IR_+.
\end{align*}
where $0+$ denotes a positive real number very close to 0. Since our estimator is not defined in $0$ we cannot normalize it with $\widetilde S(0)$.
    
Let us now illustrate the behavior of the heuristic estimator $\widehat S$ for an increasing number of observations compared to the estimator $\widetilde S_{\widehat k}$.\\
  \begin{minipage}{\textwidth}
\centering{\begin{minipage}[t]{0.32\textwidth}
		\includegraphics[width=\textwidth,height=60mm]{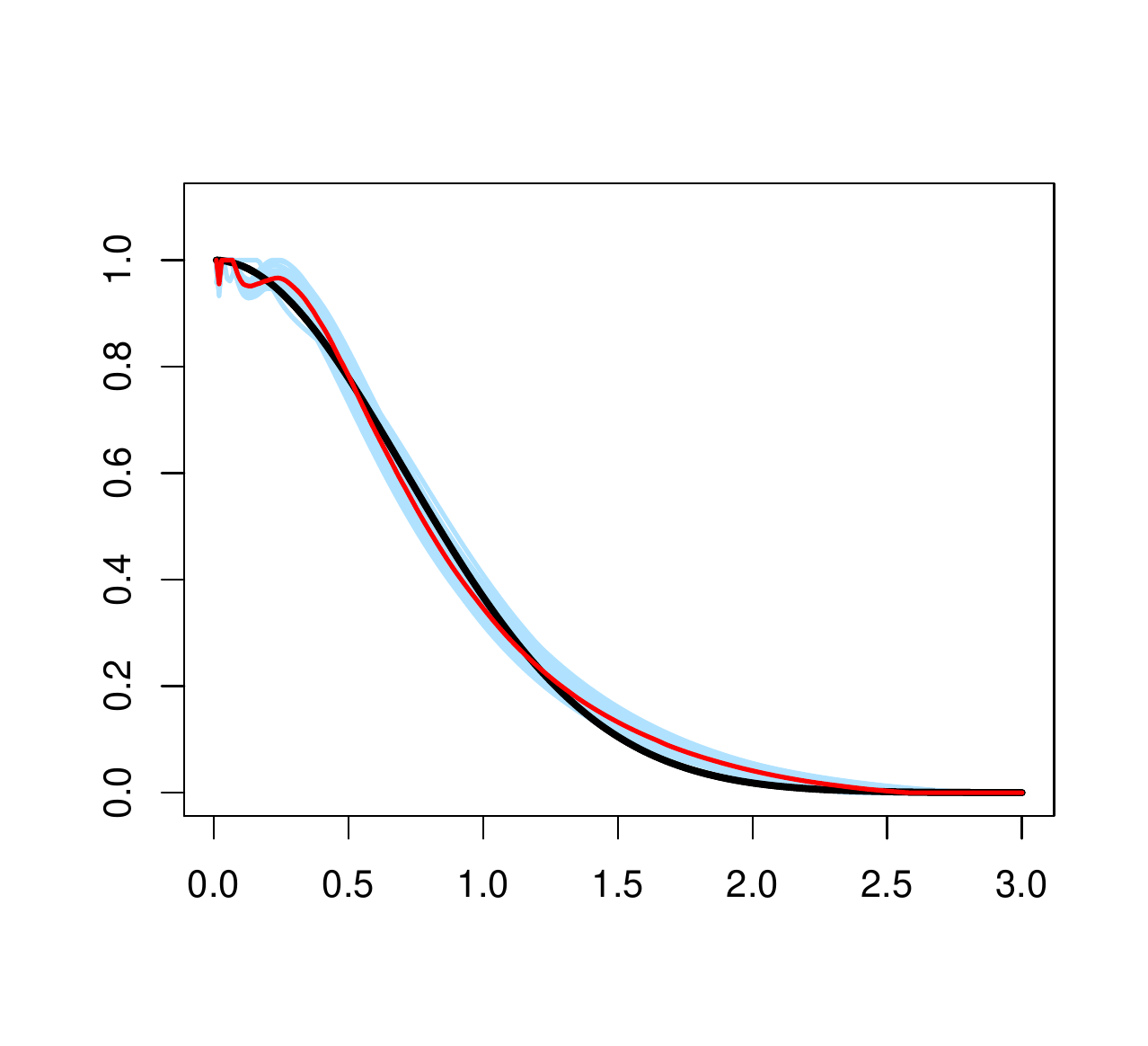}
	\end{minipage}
	\begin{minipage}[t]{0.32\textwidth}
		\includegraphics[width=\textwidth,height=60mm]{02_02_1000.pdf}
	\end{minipage}
	\begin{minipage}[t]{0.32\textwidth}
		\includegraphics[width=\textwidth,height=60mm]{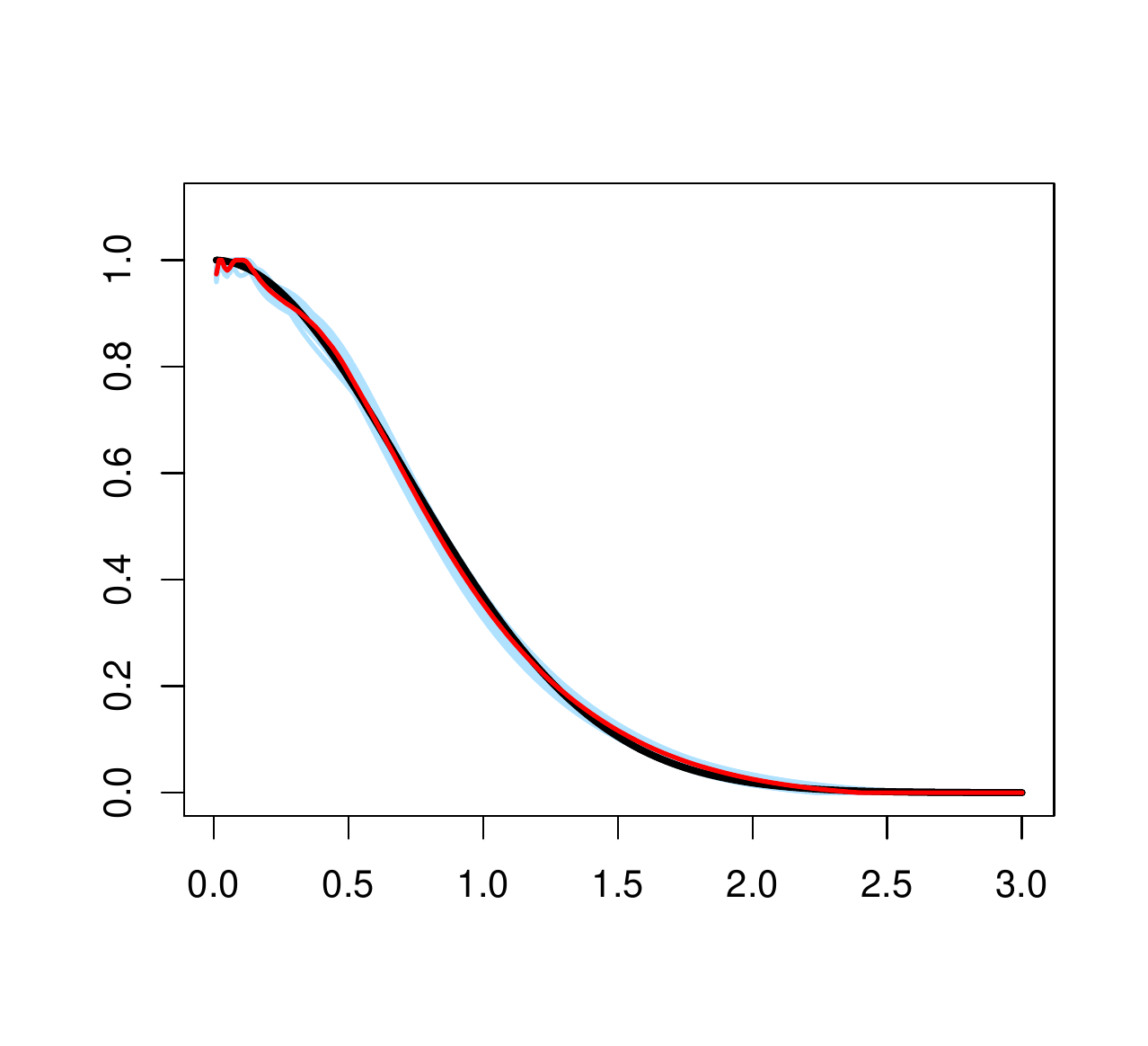}
	\end{minipage}}
	
\centering{\begin{minipage}[t]{0.32\textwidth}
		\includegraphics[width=\textwidth,height=60mm]{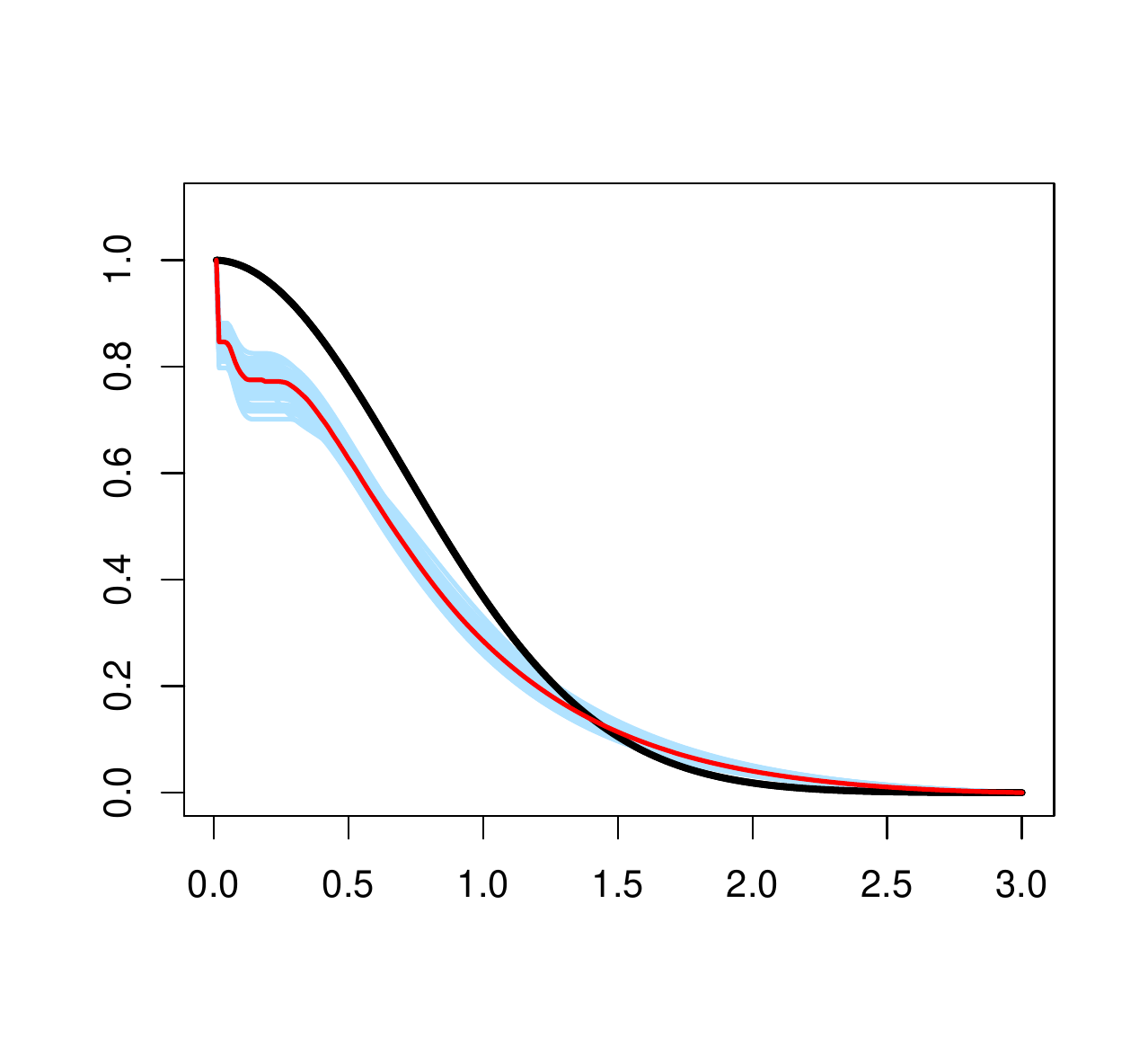}
	\end{minipage}
	\begin{minipage}[t]{0.32\textwidth}
		\includegraphics[width=\textwidth,height=60mm]{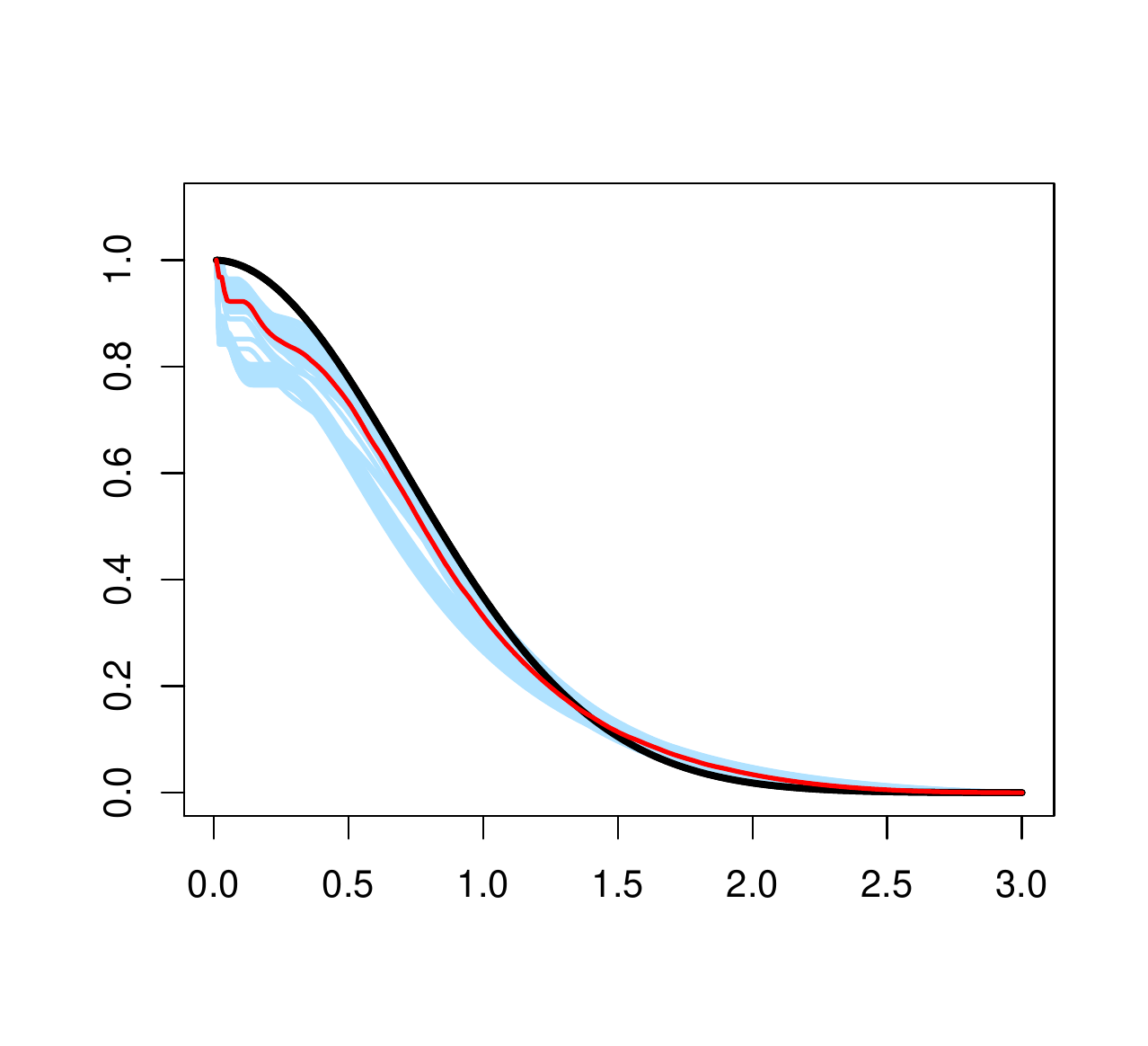}
	\end{minipage}
\begin{minipage}[t]{0.32\textwidth}
	\includegraphics[width=\textwidth,height=60mm]{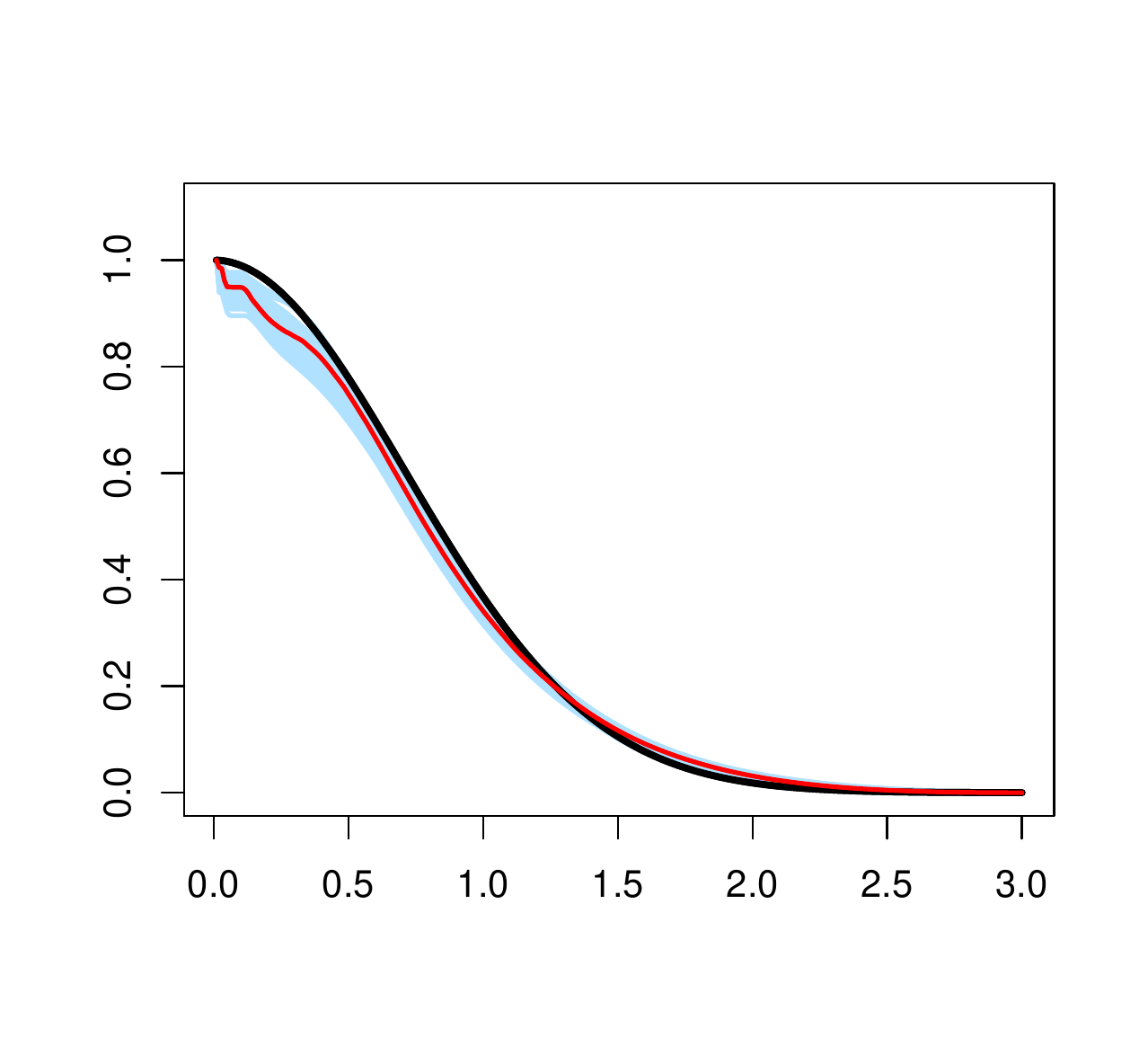}
\end{minipage}}
      \captionof{figure}{\label{figure:4}Considering 
        the estimators $\widetilde S_{\widehat k}$ (top) and $\widehat S$ (bottom), we depict 
        50 Monte-Carlo simulations with varying sample size $n=500$ (left), $n=1000$ (middle) and $n=2000$ (right) in the case (ii) with error density (b). The true survival function $S$ is given by the black curve while the red curve is the point-wise empirical median of the 50 estimates.}
\end{minipage}
Although the estimator $\widehat S$ is a survival function, the modification of the estimator seems to introduce an additional bias to the estimation. Based on the numerical study, this additional bias seems to decrease for $n$ large enough. Nevertheless, the authors want to stress out that this modification is purely heuristic. 
    
\paragraph{Dependent data}
In this section we are going to take a look at an AR(1)-process of the following form and analyze its structure using the functional dependence measure. For $|\rho| < 1$,
    \begin{align*}
        X_n :=\rho X_{n-1} + \varepsilon_n, \ \text{ where  } \ \varepsilon_n|B_n\sim \Gamma_{(B_n, \lambda)}\ \text{ with }\  B_n \sim \mathrm{Bin}_{(m, 1-\rho)}.
    \end{align*}
It is apparent that the innovations constructed are i.i.d. and for this specific choice of $\varepsilon_n$, the marginals of the AR(1)-process follow a Gamma distribution $\Gamma_{(m,\lambda)}$ as shown in \cite{gaver_lewis}. On the other hand, $(X_n)$ then emits a representation as a Bernoulli shift $X_n = \sum_{j=0}^\infty \rho^j\varepsilon_{n-j}$ (cf. \cite{brockwell} or\cite{chow_teicher}). In this case,
\[
    \delta_p^X(k) = |\rho^k|\sup_{j=1,...,n}\|\varepsilon_j-\varepsilon_j^{*(j-k)}\|_p \le 2|\rho^k|\|\varepsilon_1\|_p.
\] Therefore, $\sum_{k=0}^\infty \delta_2^X(k) = \frac{1}{1-|\rho|} < \infty$ and $\sum_{k=0}^\infty \delta_1^X(k)^{1/2} = \frac{1}{1-|\rho|^{1/2}}< \infty$ as geometric series and satisfies the assumptions of Corollaries \ref{cor:consis_dep} and \ref{thm:upper_bound_adap}. Specifically, the estimator's variance depends on the choice of $\rho$. The closer $\rho$ is to one, the more dependent the data becomes where as the variance increases. However, if $\rho$ approaches zero, the data can be seen more independent until it becomes i.i.d. This behavior can be seen in Figure \ref{table:2}, as well.
   
\begin{figure}[ht]
\centering
\begin{tabular}{@{}cc||c|c|c@{}}
			&& $n=500$ & $n=1000$ & $n=2000$\\\midrule\midrule
			$m=1$&$\rho=0.1$ & $0.15$&$0.11$ &$0.05$\\
			&$\rho=0.5$& $0.29$ & $0.16$ & $0.07$\\
			&$\rho=0.9$& $1.19$& $0.63$& $0.35$\\
			$m=4$&$\rho=0.1$& $0.69$& $0.39$ &$0.13$\\
			&$\rho=0.5$& $0.93$& $0.52$ &$0.18$\\
			&$\rho=0.9$& $2.87$& $1.57$ &$0.70$\\
			\bottomrule
	\end{tabular}\vspace*{0.2cm}
	\caption{The entries showcase the MISE (scaled by a factor of 100) obtained by Monte-Carlo simulations each with 200 iterations. We take a look at different densities, three distinct sample sizes and varying $\rho$. The error density is chosen as (a) in each case.}
\label{table:2}
\end{figure}

\newpage
\section{Appendix}
\label{sec_append}

\subsection{Useful inequalities}

The following inequality is due to
\cite{Talagrand1996}, the formulation of the first part  can be found for
example in \cite{KleinRio2005}.

\begin{lem}(Talagrand's inequality)\label{tal:re} Let
	$Z_1,\dotsc,Z_n$ be independent $\mathcal Z$-valued random variables and let $\bar{\nu}_{h}=n^{-1}\sum_{i=1}^n\left[\nu_{h}(Z_i)-\IE\left(\nu_{h}(Z_i)\right) \right]$ for $\nu_{h}$ belonging to a countable class $\{\nu_{h},h\in\sH\}$ of measurable functions. Then,
	\begin{align}
	&\IE\left(\sup_{h\in\sH}|\bar{\nu}_{h}|^2-6\Psi^2\right)_+\leq C \left[\frac{\tau}{n}\exp\left(\frac{-n\Psi^2}{6\tau}\right)+\frac{\psi^2}{n^2}\exp\left(\frac{-K n \Psi}{\psi}\right) \right]\label{tal:re1} 
	\end{align}
	with numerical constants $K=({\sqrt{2}-1})/({21\sqrt{2}})$ and $C>0$ and where
	\begin{equation*}
	\sup_{h\in\sH}\sup_{z\in\mathcal Z}|\nu_{h}(z)|\leq \psi,\qquad \IE(\sup_{h\in\sH}|\overline{\nu}_{h}|)\leq \Psi,\qquad \sup_{h\in \sH}\frac{1}{n}\sum_{j=1}^n \Var(\nu_{h}(Z_i))\leq \tau.
	\end{equation*}
\end{lem}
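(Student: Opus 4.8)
The plan is to derive the stated bound on the truncated second moment of the supremum in two stages: first establish a one-sided Bernstein-type concentration inequality for $Z:=\sup_{h\in\sH}|\bar\nu_h|$ around its mean, and then integrate the resulting tail estimate against the level to produce the term $\IE(\sup_{h\in\sH}|\bar\nu_h|^2-6\Psi^2)_+$. The genuinely deep analytic input is the concentration inequality; once it is available, the passage to the quoted form is a careful but essentially mechanical computation, and the threshold $6\Psi^2$ together with the constants $C$ and $K$ are dictated by that computation.

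For the first stage I would use Bousquet's sharpened version of Talagrand's inequality. Interpreting $\bar\nu_h$ as a normalized empirical process and using the three bounds $\psi$, $\Psi$, $\tau$ of the hypotheses together with $\IE(Z)\leq\Psi$, it yields, for every $x>0$,
\[
\IP\Big(Z \geq \Psi + \sqrt{2(\tau+4\psi\Psi)\,x/n} + \tfrac{2\psi x}{3n}\Big) \leq e^{-x}.
\]
If a self-contained derivation is desired, I would reprove this by the entropy method: regard $Z$ as a function of the independent coordinates $Z_1,\dots,Z_n$, apply tensorization of entropy to obtain a modified logarithmic Sobolev inequality whose right-hand side is controlled by the variance proxy $\tau$ and the uniform bound $\psi$, and then run the Herbst argument, integrating the resulting differential inequality for $\lambda\mapsto\log\IE(e^{\lambda Z})$ to recover the Bernstein tail with the sharp variance factor. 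The countability of $\{\nu_h:h\in\sH\}$ is what guarantees measurability of $Z$ and a measurable selection of the near-maximizer, so that the bounded-differences and entropy steps are legitimate.

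For the second stage I would use the layer-cake identity
\[
\IE\big(Z^2-6\Psi^2\big)_+ = \int_0^\infty \IP\big(Z > \sqrt{6\Psi^2+u}\big)\,du,
\]
and for each $u$ insert the largest $x=x(u)$ compatible with $\sqrt{6\Psi^2+u}\geq \Psi + \sqrt{2(\tau+4\psi\Psi)x/n} + 2\psi x/(3n)$, so that $\IP(Z>\sqrt{6\Psi^2+u})\leq e^{-x(u)}$. The threshold $6\Psi^2$ is chosen precisely so that the leading contribution $\Psi$ is absorbed, after which two regimes remain. For moderate $u$ the square-root (sub-Gaussian) part dominates and, after integration, contributes a term of order $\frac{\tau}{n}\exp(-n\Psi^2/(6\tau))$; for large $u$ the linear (sub-exponential) part dominates and contributes $\frac{\psi^2}{n^2}\exp(-Kn\Psi/\psi)$ with $K=(\sqrt2-1)/(21\sqrt2)$. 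Splitting the $u$-integral at the crossover of the two regimes and bounding each piece separately gives the two summands with a universal constant $C$.

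The main obstacle is unambiguously the first stage. Talagrand's inequality is a deep concentration result, and a from-scratch proof requires the full entropy-method apparatus (tensorization plus Herbst) together with the delicate bookkeeping needed to obtain the correct variance factor rather than a cruder one; this is why I would, exactly as the authors do, cite \cite{Talagrand1996} in the formulation of \cite{KleinRio2005}. The tail-integration stage, by contrast, is elementary, the only genuine care being to locate the crossover between the sub-Gaussian and sub-exponential regimes so that the exponents and the explicit constant $K$ match the statement.
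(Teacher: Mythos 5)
The paper offers no proof of this lemma at all: it is quoted as a known result, attributed to \cite{Talagrand1996} in the formulation of \cite{KleinRio2005}, which is precisely the citation you fall back on for the concentration step. Your two-stage reconstruction --- a Bernstein-type concentration inequality for $\sup_{h}|\bar\nu_h|$ (Bousquet/Klein--Rio), followed by layer-cake integration of the tail with the threshold $6\Psi^2$ absorbing the mean --- is the standard derivation of this integrated form and is sound; the one piece of bookkeeping you gloss over is that the variance proxy $\tau+4\psi\Psi$ inside the square root must first be reduced to $\tau$ by absorbing the $\psi\Psi$ term into the mean via AM--GM (yielding a threshold of the form $(1+\eta)\Psi+\sqrt{2\tau x/n}+c(\eta)\psi x/n$), which is exactly where the factor $6$ in $6\Psi^2$, the exponent $n\Psi^2/(6\tau)$, and the explicit constant $K=(\sqrt{2}-1)/(21\sqrt{2})$ come from.
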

\begin{rem}
	Keeping the bound \eqref{tal:re1}  in mind, let us
	specify particular choices $K$; in fact $K\geq \tfrac{1}{100}$.
	The next bound there becomes an immediate consequence, 
	\begin{align}
	&\IE\left(\sup_{h\in\sH}|\bar{\nu}_{h}|^2-6\Psi^2\right)_+\leq C \left(\frac{\tau}{n}\exp\left(\frac{-n\Psi^2}{6\tau}\right)+\frac{\psi^2}{n^2}\exp\left(\frac{-n \Psi}{100\psi}\right) \right).\label{tal:re3} 
	\end{align}
\end{rem}

\subsection{Proofs of Section \ref{sec_minimax}}\label{a:mt}
\begin{proof}[Proof of Lemma \ref{prop:surv}]
Let us assume that $\IE(X^{1/2})<\infty$. Then, $f\in \IL^1(\IR,x^{1/2})$ and thus $S\in \IL^1(\IR, x^{-1/2})$ because
\begin{align*}
    \int_0^{\infty}x^{-1/2} S(x) dx = \int_0^{\infty} \int_0^{\infty} x^{-1/2}\Ii_{(0,x)}(y) f(y) dy dx = \int_0^{\infty} 2y^{1/2} f(y) dy<\infty.
\end{align*}
By the generalized Minkowski inequality, cf. \cite{Tsybakov2008}, we have
\begin{align*}
\int_0^{\infty} S^2(x)dx &= \int_0^{\infty} \left( \int_0^{\infty} \Ii_{(0,x)}(y)
f(x)dx \right)^2 dy\\
&\leq \left( \int_0^{\infty} \left(\int_0^{\infty} \Ii_{(0,x)}(y) f^2(x) dy \right)^{1/2} dx \right)^2 
= \left(\int_0^{\infty} x^{1/2} f(x) dx\right)^2,
\end{align*}
which implies that $S\in \IL^2(\IR_+, x^0)$.
For the second part we can easily see that $\IE(\|\widehat S_X\|_{\IL^1(\IR_+,x^{-1/2})})= \int_0^{\infty}x^{-1/2} S_X(x)dx <\infty$. Since $\IE(X_1)<\infty$, we get \begin{align*}
    \IE(\|\widehat S_X\|^2)\leq \int_0^{\infty} \IE(\Ii_{(x,\infty)}(X_1))  dx = \int_0^{\infty} S(x)dx = \IE(X_1)< \infty.
\end{align*} This implies that $\widehat S_X \in\IL^1(\IR_+,x^{-1/2}) \cap \IL^2(\IR_+,x^0)$ almost surely.
\end{proof}
\begin{proof}[Proof of Theorem \ref{thm:upp_bound}] Let $k\in \IR_+$. Since $\sM_{1/2}[S-S_k](t)=0$ for $|t|\leq k$ we get by application of the  Plancherel equation, $\langle S-S_k, S_k-\widehat S_k\rangle = \frac{1}{2\pi}\int_{-k}^k \sM_{1/2}[S-S_k](t) \sM_{1/2}[S_k-\widehat S_k](-t)dt = 0$ and thus $\|S-\widehat S_k\|^2= \|S-S_k\|^2+\|S_k-\widehat S_k\|^2.$ Again by application of the Plancherel equality, cf. equation \reff{eq:Mel:plan}, and the Fubini-Tonelli theorem,
\begin{align*}
    \IE(\|\widehat S_k-S_k\|^2) = \frac{1}{2\pi} \int_{-k}^k \frac{\Var(\widehat \sM(t))}{|(1/2+it)\sM_{3/2}[g](t)|^{2}}dt.
\end{align*}
Now we will use the fact that $\Var(\widehat \sM(t))= \Var(\widehat \sM(t)-\IE_{|X}(\widehat \sM(t)))+\Var(\IE_{|X}(\widehat \sM(t))$. For the first summand we see that by $\IE_{|X}(Y_j^{1/2+it})=\IE(U_j^{1/2+it})X_j^{1/2+it}$ and
\begin{align*}
\Var\left(\widehat \sM(t)-\IE_{|X}(\widehat \sM(t))\right)&= n^{-2} \sum_{j,j'=1}^n \IE(X_j^{1/2+it} X_{j'}^{1/2-it} (U_j^{1/2+it}-\IE(U_j^{1/2+it})(U_{j'}^{1/2-it}-\IE(U_{j'}^{1/2-it}))\\
&=n^{-2} \sum_{j,j'=1}^n \IE(X_j^{1/2+it} X_{j'}^{1/2-it})\IE( (U_j^{1/2+it}-\IE(U_j^{1/2+it})(U_{j'}^{1/2-it}-\IE(U_{j'}^{1/2-it}))\\
&=n^{-1}\IE(X_1)\Var(U_1^{1/2+it})\leq n^{-1} \IE(Y_1)
\end{align*}
On the other hand, we have $\IE_{|X}(\widehat{\sM}(t))=n^{-1} \sum_{j=1}^n X_j^{1/2+it} \IE(U_j^{1/2+it})$ and thus
\begin{align*}
    \Var(\IE_{|X}(\widehat{\sM}(t))
    &=|\IE(U_1^{1/2+it})|^2\IE(|n^{-1} \sum_{j=1}^n X_j^{1/2+it}- \sM_{3/2}[f](t)|^2).
\end{align*}
Due to these considerations and the fact that $\IE(U_1^{1/2+it})=\sM_{3/2}[g](t)$ for all $t\in \IR$, we have
\begin{align*}
    \IE(\|\widehat S_k-S_k\|^2) \leq \IE(Y_1) \frac{\Delta_g(k)}{n} + \frac{1}{2\pi} \int_{-k}^k \frac{\IE(|\widehat \sM_X(t)-\sM_{3/2}[f](t)|^2)}{|1/2+it|^2}dt
\end{align*}
As for the last summand, we see for any $k\in \IR_+$,
\begin{align*}
    \frac{1}{2\pi} \int_{-k}^k \frac{\IE(|\widehat \sM_X(t)-\sM_{3/2}[f](t)|^2)}{|1/2+it|^2}dt = \frac{1}{2\pi} \int_{-k}^{k}\Var(\sM_{1/2}[\widehat S_X](t)) dt 
\end{align*}
using equation \reff{eq:mel:surv}.
\end{proof}
For the proof of Corollary \ref{cor:consis_dep} we will need the following results. The key statement regarding $\beta$-mixing processes is delivered by the proposed variance bound derived by \cite{asin} after Lemma 4.1 of the same work. Their approach is based on the original idea of \cite[Theorem 2.1]{viennet}.

\begin{lem} \label{lem:beta_mix_var_bound}
    Let $(Z_j)_{j \in \IZ}$ be a strictly stationary process of real-valued random variables with common marginal distribution $\IP$. There exists a sequence $(b_k)_{k \in \IN}$ of measurable functions $b_k:\IR \to [0,1]$ with $\IE_{\IP}[b_k(Z_0)] = \beta(Z_0,Z_k)$ such that for any measurable function $h$ with $\IE[|h(Z_0)|^2] < \infty$ and $b = \sum_{k=1}^\infty (k+1)^{p-2}b_k : \IR \to [0,\infty]$, $p \ge 2$,
    \begin{equation*}
        \Var\big(\sum_{j=1}^n h(Z_j)\big) \le 4n \IE[|h(Z_0)|^2b(Z_0)]
    \end{equation*}
    where we set $b_0 \equiv 1$.
\end{lem}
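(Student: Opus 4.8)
The plan is to reduce everything to the case $p=2$ and to run the whole argument on top of a Viennet-type covariance inequality, which I would isolate as the key claim: there exist measurable functions $b_k:\IR\to[0,1]$ with $\IE_{\IP}[b_k(Z_0)]=\beta(Z_0,Z_k)$ such that, for every $h$ with $\IE[h(Z_0)^2]<\infty$ and every $k\ge1$,
\[
|\Cov(h(Z_0),h(Z_k))|\le 2\,\IE[h(Z_0)^2 b_k(Z_0)].
\]
The reduction to $p=2$ is cost-free: since $(k+1)^{p-2}\ge1$ for $p\ge2$, the function $b$ only grows with $p$, so the asserted bound for general $p$ follows from the one obtained with the unweighted weights $(k+1)^{0}=1$.

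Granting the claim, the rest is bookkeeping based on stationarity. Writing $\gamma(\ell)=\Cov(h(Z_0),h(Z_\ell))$ and using $\Cov(h(Z_i),h(Z_j))=\gamma(|i-j|)$, I would expand
\[
\Var\Big(\sum_{j=1}^n h(Z_j)\Big)=n\,\gamma(0)+2\sum_{k=1}^{n-1}(n-k)\gamma(k).
\]
For the diagonal I use $\gamma(0)=\Var(h(Z_0))\le\IE[h(Z_0)^2]=\IE[h(Z_0)^2 b_0(Z_0)]$ with $b_0\equiv1$; for the off-diagonal terms I insert the covariance inequality and bound $(n-k)\le n$. Collecting terms, using $n\le 4n$ on the diagonal, and passing the nonnegative sum through the expectation by monotone convergence yields
\[
\Var\Big(\sum_{j=1}^n h(Z_j)\Big)\le 4n\,\IE\Big[h(Z_0)^2\Big(b_0(Z_0)+\sum_{k\ge1}(k+1)^{p-2}b_k(Z_0)\Big)\Big]=4n\,\IE[h(Z_0)^2 b(Z_0)],
\]
where the last identity records the convention $b_0\equiv1$ that encodes the diagonal contribution into $b$.

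The genuine difficulty—and the reason one cites \cite[Theorem 2.1]{viennet} (as used by \cite{asin}) rather than reproving it—is the construction of the $b_k$ and the covariance inequality itself. The natural attempt is to take the Radon--Nikodym density $p_k=\tfrac{dP_{(Z_0,Z_k)}}{d(\mu\otimes\mu)}$ and set $b_k(x)=\tfrac12\int|p_k(x,y)-1|\,\mu(dy)$; marginal consistency $\int p_k(x,y)\,\mu(dy)=1$ then gives $b_k(x)=\int(p_k(x,y)-1)_+\,\mu(dy)\le1$, integrating in $x$ gives $\IE[b_k(Z_0)]=\tfrac12\|P_{(Z_0,Z_k)}-\mu\otimes\mu\|_{\mathrm{TV}}=\beta(Z_0,Z_k)$, and the covariance bound would follow from $\gamma(k)=\int\int h(x)h(y)(p_k(x,y)-1)\,\mu(dx)\mu(dy)$ together with $2|h(x)h(y)|\le h(x)^2+h(y)^2$. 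The obstacle is that $p_k$ need not be symmetric in its two arguments, so this naive estimate produces two a priori different weight functions (one attached to the $Z_0$-marginal, one to the $Z_k$-marginal) rather than a single $b_k\in[0,1]$ of mean exactly $\beta(Z_0,Z_k)$ controlling both covariance directions. Resolving this—via the coupling construction of Viennet, equivalently Berbee's lemma—is the technical heart that I would import from the cited works rather than redo here.
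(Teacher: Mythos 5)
Your proposal is correct, and it is in fact more explicit than what the paper offers: the paper gives no proof of this lemma at all, importing it verbatim by citation from \cite{asin} (after their Lemma 4.1), which in turn rests on \cite[Theorem 2.1]{viennet}. Your route decomposes the citation more finely: you import only the Viennet--Delyon covariance inequality and then derive the variance bound yourself through the stationary expansion $\Var\big(\sum_{j=1}^n h(Z_j)\big)=n\gamma(0)+2\sum_{k=1}^{n-1}(n-k)\gamma(k)$, the crude bounds $n-k\le n$ and $\gamma(0)\le \IE[h(Z_0)^2]$, and monotone convergence; your observation that the case $p\ge 2$ reduces to $p=2$ because $(k+1)^{p-2}\ge 1$ is also correct (in Viennet's work the weights $(k+1)^{p-2}$ are only needed for $p$-th moment bounds, so for the variance they can only enlarge $b$). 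What your derivation buys is transparency about exactly which external input is needed, and it incidentally repairs a glitch in the paper's statement: as written, $b=\sum_{k=1}^{\infty}(k+1)^{p-2}b_k$ does not contain $b_0$, so the clause ``where we set $b_0\equiv 1$'' is vacuous; the definition your bookkeeping uses (and the one intended, following Viennet) is $b=b_0+\sum_{k\ge 1}(k+1)^{p-2}b_k$. One refinement on the obstacle you flag at the end: it is real but resolvable without redoing Berbee's coupling, given the two-weight form of the inequality you would cite anyway. Delyon's inequality gives $|\Cov(f(Z_0),g(Z_k))|\le 2\,\IE[f(Z_0)^2 b^{(1)}_k(Z_0)]^{1/2}\,\IE[g(Z_k)^2 b^{(2)}_k(Z_k)]^{1/2}$ with $b^{(1)}_k,b^{(2)}_k\in[0,1]$ each of mean $\beta(Z_0,Z_k)$; taking $f=g=h$, applying $2\sqrt{AB}\le A+B$, and using stationarity to evaluate both expectations at $Z_0$ yields $|\Cov(h(Z_0),h(Z_k))|\le 2\,\IE[h(Z_0)^2\bar b_k(Z_0)]$ with the single weight $\bar b_k:=(b^{(1)}_k+b^{(2)}_k)/2\in[0,1]$, $\IE[\bar b_k(Z_0)]=\beta(Z_0,Z_k)$ --- exactly your key claim. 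The coupling construction behind the two-weight inequality remains the genuinely hard part, and importing it, as both you and the paper do, is the right call.
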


As far as the functional dependence measure is concerned, we do not have access to the process $(X_j)$ itself. In its given form $(X_j^{1/2+it})$ we make use of the two following statements.

\begin{lem} \label{lem:holder}
The function $g: \IR_+ \to \IR, x \to x^{1/2 + it}$, $t \in \IR$, is Hölder continuous with exponent $1/2$, i.e. $|g(x)-g(y)| \le L(t)\cdot |x-y|^{1/2}$ where $L(t) = 1 + 4|t|^{1/2}$.
\end{lem}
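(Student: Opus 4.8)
The plan is to fix $t\in\IR$ and, without loss of generality, assume $x\ge y>0$ (the reversed case is symmetric, and the boundary value is harmless: extending $g$ continuously by $g(0)=0$ gives $|g(x)-g(0)|=x^{1/2}=|x-0|^{1/2}$). The starting point is to split the increment into a ``modulus'' part and a ``phase'' part by inserting $\pm\,y^{1/2}x^{it}$:
\begin{align*}
    x^{1/2+it}-y^{1/2+it}=(x^{1/2}-y^{1/2})\,x^{it}+y^{1/2}\bigl(x^{it}-y^{it}\bigr).
\end{align*}
I would then estimate the two summands separately; the first contributes the constant $1$ in $L(t)$ and the second contributes the $|t|^{1/2}$-term.

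For the first summand I would use $|x^{it}|=1$ together with the elementary Hölder-$1/2$ estimate $x^{1/2}-y^{1/2}\le (x-y)^{1/2}$ (which follows from $(x^{1/2}-y^{1/2})^2\le x-y$ for $x\ge y$), so that $\bigl|(x^{1/2}-y^{1/2})x^{it}\bigr|\le |x-y|^{1/2}$.

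The crux is the second summand $y^{1/2}|x^{it}-y^{it}|$, and the main obstacle is obtaining the \emph{correct} growth $|t|^{1/2}$ rather than $|t|$: a naive fundamental-theorem-of-calculus bound applied to $u\mapsto u^{1/2+it}$ would only produce the factor $|1/2+it|$, which grows linearly in $|t|$. To do better I would exploit that $x^{it}=e^{it\log x}$ is unimodular and interpolate between two elementary bounds, namely $|x^{it}-y^{it}|\le 2$, and — via $|e^{ia}-e^{ib}|\le|a-b|$ followed by $\log(x/y)\le (x-y)/y$ — the Lipschitz-type bound $|x^{it}-y^{it}|\le |t|\log(x/y)\le |t|(x-y)/y$. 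Writing $h=x-y\ge 0$, these combine through the scale-free inequality $\min(a,b)\le\sqrt{ab}$:
\begin{align*}
    y^{1/2}\,|x^{it}-y^{it}|\le \min\bigl(2y^{1/2},\,|t|\,h\,y^{-1/2}\bigr)\le \sqrt{\,2y^{1/2}\cdot |t|h\,y^{-1/2}\,}=\sqrt{2}\,|t|^{1/2}\,|x-y|^{1/2},
\end{align*}
which is precisely the step that removes the dependence on $y$ and yields the $|t|^{1/2}$ rate.

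Combining the two estimates gives $|g(x)-g(y)|\le (1+\sqrt2\,|t|^{1/2})\,|x-y|^{1/2}\le(1+4|t|^{1/2})\,|x-y|^{1/2}$, establishing the claim with room to spare. The only facts requiring a line of justification are the four elementary inequalities $x^{1/2}-y^{1/2}\le(x-y)^{1/2}$, $|e^{ia}-e^{ib}|\le|a-b|$, $\log(1+u)\le u$, and $\min(a,b)\le\sqrt{ab}$, none of which is deep; the genuine idea is the phase/modulus decomposition together with the geometric-mean interpolation in the phase term.
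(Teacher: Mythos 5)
Your proof is correct and follows essentially the same route as the paper: the same phase/modulus decomposition via inserting $\pm\, y^{1/2}x^{it}$, the same pair of bounds on the phase term (the Lipschitz-in-$\log$ estimate $|t|(x-y)/y$ and the trivial bound $2$), and the same interpolation idea, which you phrase as $\min(a,b)\le\sqrt{ab}$ where the paper uses $\min\{1,z\}\le z^{1/2}$. Your only deviation is cosmetic but pleasant: bounding $|x^{it}-y^{it}|\le |t|\log(x/y)$ directly via $|e^{ia}-e^{ib}|\le|a-b|$ rather than splitting into cosine and sine differences as the paper does, which yields the sharper constant $1+\sqrt{2}\,|t|^{1/2}$ in place of the paper's $1+4|t|^{1/2}$.
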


\begin{lem} \label{lem:transfer_dependency}
    Let $g: \IR_+ \to \IR, x \to x^{1/2 + it}$, $t \in \IR$. For \textbf{(F)} holds then, $\Var(\sum_{j=1}^n g(X_j))^{1/2} \le L(t) \cdot n^{1/2}\sum_{k=1}^\infty \delta_1^X(k)^{1/2}$ where $L(t) = 1 + 4|t|^{1/2}$.
\end{lem}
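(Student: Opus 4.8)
The plan is to control the variance through the orthogonal martingale-difference decomposition attached to the Bernoulli-shift filtration, the standard device in the functional-dependence-measure literature. Write $g_t(x):=x^{1/2+it}$, abbreviate $W_j:=g_t(X_j)$, and note that $|g_t(x)|=x^{1/2}$ together with $\IE(X_j)<\infty$ forces $W_j\in\IL^2$. For $i\in\IZ$ introduce the projection operator $P_i(\cdot):=\IE(\cdot\mid\sG_i)-\IE(\cdot\mid\sG_{i-1})$ with $\sG_i=\sigma(\varepsilon_i,\varepsilon_{i-1},\dots)$. Since $W_j$ is $\sG_j$-measurable and $\IE(W_j\mid\sG_{-\infty})=\IE(W_j)$ by tail-triviality, the telescoping sum yields the $\IL^2$-convergent decomposition $W_j-\IE(W_j)=\sum_{k\ge0}P_{j-k}W_j$. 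Summing over $j$ and regrouping by the lag $k$ gives
\[
\sum_{j=1}^n\big(W_j-\IE(W_j)\big)=\sum_{k\ge0}D_{n,k},\qquad D_{n,k}:=\sum_{j=1}^nP_{j-k}W_j.
\]

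First I would exploit that, for each fixed $k$, the terms of $D_{n,k}$ are martingale differences for the shifted filtration $(\sG_{j-k})_j$: each $P_{j-k}W_j$ is $\sG_{j-k}$-measurable with $\IE(P_{j-k}W_j\mid\sG_{j-k-1})=0$, hence pairwise orthogonal in $\IL^2$, so that $\norm{D_{n,k}}_2^2=\sum_{j=1}^n\norm{P_{j-k}W_j}_2^2$. To bound a single projection I would use the coupling identity $\IE\big(g_t(X_j^{*(j-k)})\mid\sG_{j-k}\big)=\IE(W_j\mid\sG_{j-k-1})$, valid because $\varepsilon_{j-k}^{*}$ is an independent copy of $\varepsilon_{j-k}$, so integrating it out under $\sG_{j-k}$ reproduces the conditional expectation one step further in the past. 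This gives $P_{j-k}W_j=\IE\big(W_j-g_t(X_j^{*(j-k)})\mid\sG_{j-k}\big)$, and conditional Jensen combined with the exponent-$1/2$ Hölder estimate $|g_t(x)-g_t(y)|^2\le L(t)^2|x-y|$ of Lemma \ref{lem:holder} yields
\[
\norm{P_{j-k}W_j}_2^2\le\IE\big(|g_t(X_j)-g_t(X_j^{*(j-k)})|^2\big)\le L(t)^2\,\IE\big(|X_j-X_j^{*(j-k)}|\big)=L(t)^2\,\delta_1^X(k).
\]

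Combining the two displays gives $\norm{D_{n,k}}_2\le n^{1/2}L(t)\,\delta_1^X(k)^{1/2}$, and a final Minkowski inequality across the lags produces
\[
\Var\Big(\sum_{j=1}^nW_j\Big)^{1/2}=\Big\|\sum_{k\ge0}D_{n,k}\Big\|_2\le\sum_{k\ge0}\norm{D_{n,k}}_2\le L(t)\,n^{1/2}\sum_{k\ge0}\delta_1^X(k)^{1/2},
\]
which is the asserted bound, the series (naturally running over all lags $k\ge0$, as in the AR(1) computation) being finite by the case-\textbf{(F)} hypothesis $\sum_{k}\delta_1^X(k)^{1/2}<\infty$. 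I expect the main obstacle to be the rigorous justification of the decomposition and of the coupling identity, rather than any of the estimates: one must confirm the $\IL^2$-convergence of $\sum_{k\ge0}P_{j-k}W_j$, the tail-triviality $\IE(W_j\mid\sG_{-\infty})=\IE(W_j)$, and, keeping careful track of exactly which innovations are integrated out for the possibly non-stationary maps $J_{j,n}$, the equality $\IE\big(g_t(X_j^{*(j-k)})\mid\sG_{j-k}\big)=\IE(W_j\mid\sG_{j-k-1})$. Once these structural facts are in place, the remainder is orthogonality bookkeeping together with the already-proved Hölder bound of Lemma \ref{lem:holder}.
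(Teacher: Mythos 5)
Your proof is correct and takes essentially the same route as the paper's: the projection (martingale-difference) decomposition along the filtration $(\sG_i)$, the bound $\big\Vert\sum_{j=1}^n \sP_{j-k}\,g(X_j)\big\Vert_2 \le n^{1/2}\,\delta_2^{g(X)}(k)$ summed over lags, and the transfer $\delta_2^{g(X)}(k)\le L(t)\,\delta_1^X(k)^{1/2}$ via Lemma \ref{lem:holder}. The only difference is that where the paper simply cites Theorem 1 of Wu (2005) for the middle inequality, you rederive it from scratch (orthogonality of the projections plus the coupling identity $\IE\big(g(X_j^{*(j-k)})\mid\sG_{j-k}\big)=\IE\big(g(X_j)\mid\sG_{j-k-1}\big)$), which is a correct, self-contained substitute.
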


\begin{proof}[Proof of Corollary \reff{cor:consis_dep}]
We consider the different dependence structures separately.
    \begin{enumerate}
        \item[\textbf{(I)}] The result for independent observations can directly be obtained since
        \begin{eqnarray*}
            \Var(\sM_{1/2}[\widehat S_X](t)) = |1/2 + it|^{-2}n^{-1}\Var(X_1^{1/2+it}) 
            \le |1/2 + it|^{-2}n^{-1}\IE(X_1).
        \end{eqnarray*} On the other hand, as $k \to \infty$, $(2\pi)^{-1}\int_{-k}^k|1/2+it|^{-2}dt\rightarrow 1$.
        \item[\textbf{(B)}] In the case of $\beta$-mixing we employ Lemma \reff{lem:beta_mix_var_bound}. We then have
        \begin{eqnarray*}
            \Var(\sM_{1/2}[\widehat S_X](t)) \le 4|1/2 + it|^{-2}n^{-2} \cdot n \IE[|X_1^{1/2+it}|^2b(X_1)] \le 4|1/2 + it|^{-2}n^{-1} \IE[|X_1|b(X_1)].
        \end{eqnarray*} As before, as $k \to \infty$, $(2\pi)^{-1}\int_{-k}^k|1/2+it|^{-2}dt\rightarrow 1$.
        \item[\textbf{(F)}] We now study the dependence measure. According to Lemma \reff{lem:transfer_dependency} we have
        \begin{eqnarray*}
            \Var(\sM_{1/2}[\widehat S_X](t)) dt \le L^2(t)|1/2 + it|^{-2} \cdot n^{-1} \big(\sum_{k=1}^\infty\delta_1^X(k)^{1/2}\big)^2 
        \end{eqnarray*} where $L(t) = 1 + 4 |t|^{1/2}$. Therefore,
        $
            \int_{-k}^k L^2(t)(1/2 + it)^{-2} dt \le c\log(k)
        $ for a numerical constant $c > 0$.
    \end{enumerate}
    
\end{proof}

\begin{proof}[Proof of Lemma \reff{lem:holder}]
Without loss of generality let $x > y > 0$. By the elementary inequality $|x^{1/2}-y^{1/2}| \le |x-y|^{1/2}$ we have
    \begin{eqnarray*}
        |g(x) - g(y)| = |x^{1/2 + it} - y^{1/2 + it}| &\le& |x^{it}|\cdot|x-y|^{1/2} + |y|^{1/2}\cdot|x^{it}-y^{it}|
        \\ &\le& |x-y|^{1/2} + |y|^{1/2}\cdot|x^{it}-y^{it}|
    \end{eqnarray*}
    We then bound the second term. First we see that
    \begin{eqnarray*}
        |y|^{1/2}\cdot|x^{it}-y^{it}| \le |y|^{1/2} \cdot (|\cos(t\log(x))-\cos(t\log(y))| + |(\sin(t\log(y))-\sin(t\log(y)))|)
    \end{eqnarray*}
    Moreover,
    \begin{eqnarray*}
        |\cos(t\log(x))-\cos(t\log(y))| &\le& |t|\cdot \log(x/y) =  |t|\cdot\log \Big(1 + \frac{x-y}{y}\Big) \\
        &\le& |t| \cdot \frac{x-y}{y}. 
    \end{eqnarray*} where we used $\log(1+z) \le z$ for $z > 0$. At the same time $|\cos(t\log(x))-\cos(t\log(y))| \le 2$.
    Applying both bounds together and exploiting $\min\{1,z\} \le z^s$ for $z \ge 0$, $s \in (0,1)$,
    \[
        |y|^{1/2}\cdot|\cos(t\log(x))-\cos(t\log(y))| \le 2|y|^{1/2}\min\Big\{|t|\frac{x-y}{y},1\Big\} \le 2|t|^{1/2}|y|^{1/2}\Big|\frac{x-y}{y}\Big|^{1/2} = 2|t|^{1/2}|\cdot |x-y|^{1/2}.
    \]
    A similar argument applies for the sine terms, which delivers
    \[
        |y|^{1/2}\cdot|x^{it}-y^{it}| \le 4|t|^{1/2}\cdot|x-y|^{1/2}
    \] The case $y > x > 0$ follows analogously by interchanging the roles of $x$ and $y$.
    Therefore,
    \[
        |g(x) - g(y)| = |x^{1/2 + it} - y^{1/2 + it}| \le (1+4|t|^{1/2})|x-y|^{1/2}.
    \]
\end{proof}

\begin{proof}[Proof of Lemma \reff{lem:transfer_dependency}]
    For a sequence $W_j := J_{j,n}(\sG_i)$ with $\Vert W_1 \Vert_{1} < \infty$, let $\sP_{j-k}(W) := \IE[W \mid \sG_{j-k}] - \IE[W \mid \sG_{j-k-1}]$ denote its projection, $k \in \IN_0$. Then by \cite{wu2005anotherlook} the projection property of the conditional expectation and an elementary property of $\delta_2$ (cf. \cite{wu2005anotherlook}, Theorem 1), we have
    \begin{eqnarray*}
        \Var(\sum_{j=1}^n g(X_j))^{1/2} &=& \sum_{k=0}^\infty \Big\Vert  \sum_{j = 1}^n\sP_{j-k}g(X_j) \Big\Vert_2 \\
        &\le& n^{1/2} \sum_{k=0}^\infty  \delta_2^{g(X)}(k).
    \end{eqnarray*}
    By Lemma \reff{lem:holder},
    \begin{eqnarray*}
        \delta_2^{g(X)}(k) &=& \sup_{j=1,...,n} \Vert g(X_j) - g(X_j)^{*(j-k)} \Vert_2 \\
        &\le& L(t) \cdot \sup_{j=1,...,n}\Vert X_j - X_j^{*(j-k)} \Vert_1^{1/2} = L(t) \cdot \delta_1^X(k)^{1/2},
    \end{eqnarray*} which delivers our desired statement.
\end{proof}
\begin{proof}[Proof of Theorem \ref{theorem:lower_bound}]
	We first outline the main steps of the proof.  We will construct a family of
	functions in $\IW^s_{1/2}(L)$ by a perturbation of the
	survival function $S_o: \IR_+ \rightarrow \IR_+$ with small ``bumps", such that their $\IL^2$-distance
	and the Kullback-Leibler divergence of their induced distributions can be bounded from below and
	above, respectively. The claim follows  by applying Theorem 2.5
	in \cite{Tsybakov2008}. We use
	the following construction, which we present first.\\ 
	Denote by $C_c^{\infty}(\IR)$  the set of all smooth functions with compact support in $\IR$ and let $\psi\in C_c^{\infty}(\IR)$ be a function with support in $[0,1]$ and $\int_0^{1} \psi(x)dx = 0$. For each $K\in\IN$ (to be selected below) and
	$k\in\{0, \dots, K\}$ we define the bump-functions
	$\psi_{k, K}(x):= \psi(xK-K-k),$ $x\in\IR$. For $j\in \IN_0$ we set the finite constant $C_{j,\infty}:= \max(\|\psi^{(l)}\|_{\infty}, l\in \{0,\dots,j\})$. Let us further define the operator $\mathcal S: C_c^{\infty}(\IR)\rightarrow C_c^{\infty}(\IR)$ with $\mathcal S[f](x)=-x f^{(1)}(x)$ for all $x\in \IR$ and define $\mathcal S^1:=\mathcal S$ and $\mathcal S^{n}:=\mathcal S \circ \mathcal S^{n-1}$ for $n\in \IN, n\geq 2$.  Now, for $j \in \IN$, we define the function $
	\psi_{k,K,j}(x):= \mathcal S^{j} [\psi_{k,K}](x)=(-1)^j\sum_{i=1}^{j} c_{i,j} x^i K^{i} \psi^{(i)}(xK-K-k)$ for $x \in \IR_+$ and $c_{i,j} \geq 1$. \\
	For a bump-amplitude $\delta>0, \gamma \in \IN$ and a vector 
	$\bm{\theta}=(\theta_1,\dots,\theta_K)\in \{0,1\}^K$ we define
	\begin{equation}\label{equation:lobosurv}
	S_{\bm{\theta}}(x)=S_o(x)+ \delta K^{-s-\gamma+1} \sum_{k=0}^{K-1}
	\theta_{k+1} \psi_{k, K,\gamma-1}(x)\text{ where } S_o(x):=\exp(-x).
	\end{equation}
	Taking the negative sign of the derivative of this function leads to the density
	\begin{equation}\label{equation:lobodens}
	f_{\bm{\theta}}(x)=f_o(x)+ \delta K^{-s-\gamma+1} \sum_{k=0}^{K-1}
	\theta_{k+1} \psi_{k, K,\gamma}(x)x^{-1}\text{ where } f_o(x):=\exp(-x).
	\end{equation}
	Until now, we did not give a sufficient condition to ensure that our constructed functions $\{S_{\bm{\theta}}: \bm{\theta} \in \{0,1\}^K\}$ are in fact survival functions. We do this by stating conditions such that the family $\{f_{\bm{\theta}}: \bm{\theta} \in \{0,1\}^K\}$ is a family of densities.
	\begin{lem}\label{mm:lem:den}
		Let $0<\delta< \delta_o(\psi, \gamma):=\exp(-2)2^{-\gamma} (C_{\gamma,\infty} c_{\gamma})^{-1}$. Then for all $\bm{\theta}\in\{0,1\}^K$, $f_{\bm{\theta}}$ is a density.
	\end{lem}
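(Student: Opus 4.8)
The plan is to verify the two defining properties of a probability density directly: that $f_{\bm\theta}$ integrates to one over $\IR_+$ and that it is nonnegative everywhere. The algebraic backbone of both steps is the observation that $f_{\bm\theta}$ is exactly the negative derivative of $S_{\bm\theta}$. Since $\psi_{k,K,\gamma} = \mathcal S[\psi_{k,K,\gamma-1}]$ and $\mathcal S[h](x) = -x\,h'(x)$, I would first record the pointwise identity $\psi_{k,K,\gamma}(x)\,x^{-1} = -\psi_{k,K,\gamma-1}'(x)$. Combined with $f_o = -S_o'$, this gives $f_{\bm\theta} = -S_{\bm\theta}'$ termwise, which is consistent with the construction \reff{equation:lobosurv}--\reff{equation:lobodens}.

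For the normalization I would integrate $f_{\bm\theta}$ over $\IR_+$. The leading term contributes $\int_0^\infty e^{-x}\,dx = 1$, so it remains to show each bump integrates to zero. Using the identity above,
\[
\int_0^\infty \psi_{k,K,\gamma}(x)\,x^{-1}\,dx = -\int_0^\infty \psi_{k,K,\gamma-1}'(x)\,dx = -\big[\psi_{k,K,\gamma-1}(x)\big]_0^\infty = 0,
\]
because each $\psi_{k,K,\gamma-1}$ is smooth with support contained in $[1+k/K,\,1+(k+1)/K]\subseteq[1,2]$ and hence vanishes at $0$ and at $\infty$. Thus $\int_0^\infty f_{\bm\theta}=1$ for every $\bm\theta\in\{0,1\}^K$, independently of $\delta$. (Equivalently, one sees $S_{\bm\theta}(0^+)=1$ and $S_{\bm\theta}(\infty)=0$ from the same support considerations.)

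Positivity is the step I expect to be the main obstacle, and the only place the constraint on $\delta$ enters. First I would note that for $k=0,\dots,K-1$ the supports $[1+k/K,\,1+(k+1)/K]$ are pairwise disjoint (they meet only at endpoints, where $\psi$ and all its derivatives vanish), so at any $x$ at most one summand of the perturbation is nonzero; outside $[1,2]$ the perturbation vanishes and $f_{\bm\theta}(x)=e^{-x}>0$. On the support of a single bump I would then bound the perturbation using the explicit expansion $\psi_{k,K,\gamma}(x)=(-1)^\gamma\sum_{i=1}^\gamma c_{i,\gamma}\,x^i K^i\,\psi^{(i)}(xK-K-k)$: for $x\in[1,2]$ one has $x^{-1}\le 1$, $x^i\le 2^\gamma$, $K^i\le K^\gamma$ and $|\psi^{(i)}|\le C_{\gamma,\infty}$, which yields
\[
\delta K^{-s-\gamma+1}\,\big|\psi_{k,K,\gamma}(x)\big|\,x^{-1}\;\le\;\delta\,2^\gamma C_{\gamma,\infty}\,c_\gamma\,K^{1-s},
\]
with $c_\gamma:=\sum_{i=1}^\gamma c_{i,\gamma}$. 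Since $s,\gamma\in\IN$ and $K\ge 1$ give $K^{1-s}\le 1$, and since $e^{-x}\ge e^{-2}$ on $[1,2]$, the choice $\delta<\delta_o(\psi,\gamma)=\exp(-2)\,2^{-\gamma}(C_{\gamma,\infty}c_\gamma)^{-1}$ forces the perturbation to be strictly dominated by the background density $e^{-x}$, whence $f_{\bm\theta}(x)>0$ throughout. The one subtlety to handle with care is the disjointness-of-supports reduction, since it is precisely what lets a bound on a single bump control the entire sum.
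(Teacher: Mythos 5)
Your proposal is correct and follows essentially the same route as the paper's proof: the normalization step via $\int h' = 0$ for compactly supported smooth functions, the reduction to a single bump through disjointness of the supports $[1+k/K,1+(k+1)/K]$, and the positivity bound comparing $\exp(-2)$ on $[1,2]$ with the sup-norm estimate $\delta K^{-s-\gamma+1}\|\psi_{k,K,\gamma}\|_{\infty}x^{-1} \le \delta\,2^{\gamma}C_{\gamma,\infty}c_{\gamma}K^{1-s} \le \delta\,2^{\gamma}C_{\gamma,\infty}c_{\gamma}$. The only cosmetic difference is that you derive the sup-norm bound from the explicit expansion of $\mathcal S^{\gamma}[\psi_{k,K}]$, whereas the paper states it directly.
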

	Furthermore, it is possible to show that these survival functions all lie inside the ellipsoids $\IW^s_{1/2}(L)$ for $L$ big enough. This is captured in the following lemma.
	\begin{lem}\label{lemma:Lag_SobDen}Let
		$s\in\IN$. Then,
		there is $L_{s,\gamma,\delta}>0$ such that $S_o$
		and any $S_{\bm{\theta}}$ as in  \eqref{equation:lobodens} with
		$\bm\theta\in\{0,1\}^K$, $K\in\IN$,  belong to $\IW^s_{1/2}(L_{s,\gamma,\delta})$.
	\end{lem}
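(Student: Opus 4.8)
The plan is to exploit that, on the Mellin side, the operator $\mathcal S$ acts as multiplication by $(1/2+it)$; this is precisely why the bumps $\psi_{k,K,j}$ are built by applying $\mathcal S^j$ to $\psi_{k,K}$. First I would record, via integration by parts (all boundary terms vanishing since $\psi_{k,K}$ is supported in $[1,2]$ and $S_o=\exp(-x)$ decays at $\infty$ while the factor $x^{1/2+it}$ kills the contribution at $0$), the identity $\sM_{1/2}[\mathcal S^j h](t)=(1/2+it)^j\sM_{1/2}[h](t)$ for the functions at hand. Combining this with the elementary weight comparison $(1+t)^{2s}\le 8^s|1/2+it|^{2s}$ and the Mellin--Plancherel identity \eqref{eq:Mel:plan}, the Mellin-Sobolev seminorm collapses to a single unweighted $\IL^2$-norm,
\[
|h|_s^2=\int_{-\infty}^{\infty}(1+t)^{2s}|\sM_{1/2}[h](t)|^2\,dt\le 8^s\int_{-\infty}^{\infty}|\sM_{1/2}[\mathcal S^s h](t)|^2\,dt=2\pi\,8^s\|\mathcal S^s h\|^2 .
\]
Hence it suffices to bound $\|\mathcal S^s S_{\bm\theta}\|$ and $\|\mathcal S^s S_o\|$ uniformly in $K$ and $\bm\theta$.

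Next I would estimate this norm using the tiling and scaling built into the construction. Since $\mathcal S^s\psi_{k,K,\gamma-1}=\psi_{k,K,s+\gamma-1}$, we have
\[
\mathcal S^s S_{\bm\theta}=\mathcal S^s S_o+\delta K^{-(s+\gamma-1)}\sum_{k=0}^{K-1}\theta_{k+1}\,\psi_{k,K,s+\gamma-1}.
\]
On its support, an interval of length $K^{-1}$ contained in $[1,2]$, the bump $\psi_{k,K,m}$ is bounded by $C(m,\psi)K^{m}$, so $\|\psi_{k,K,m}\|^2\le C^2K^{2m-1}$; and because the supports over $k=0,\dots,K-1$ are pairwise disjoint, Pythagoras gives $\|\sum_k\theta_{k+1}\psi_{k,K,s+\gamma-1}\|^2\le K\cdot C^2K^{2(s+\gamma-1)-1}=C^2K^{2(s+\gamma-1)}$. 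Multiplying by the normalisation $\delta^2K^{-2(s+\gamma-1)}$ leaves the contribution $\le\delta^2C^2$, independent of $K$ and $\bm\theta$. Since $\mathcal S^s S_o=p_s(x)\exp(-x)$ with $p_s$ a polynomial of degree $s$ has finite norm depending only on $s$, this yields $|S_{\bm\theta}|_s^2\le L_{s,\gamma,\delta}$, and the same (simpler) computation gives $|S_o|_s^2\le L_{s,\gamma,\delta}$.

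It then remains to verify the remaining defining conditions of $\IW^s_{1/2}(L)$. The survival-function property is exactly Lemma \ref{mm:lem:den}: for $\delta<\delta_o(\psi,\gamma)$ each $f_{\bm\theta}$ is a density, whose survival function is $S_{\bm\theta}$ as in \eqref{equation:lobosurv}. For the variance bound I would use case \textbf{(I)} of Corollary \ref{cor:consis_dep}, namely $\Var_S(\sM_{1/2}[\widehat S_X](t))\le|1/2+it|^{-2}n^{-1}\IE_S(X_1)$, together with $\IE_S(X_1)=\int_0^{\infty}S_{\bm\theta}(x)\,dx=1$; the last equality holds because $\int_0^{\infty}\psi_{k,K,\gamma-1}=\int_0^{\infty}\psi_{k,K}=K^{-1}\int_0^1\psi=0$, using $\int\mathcal S[f]=\int f$ for compactly supported $f$ and $\int\psi=0$. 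As $(1+|t|)|1/2+it|^{-2}\le 4$, the requirement $\Var_S(\sM_{1/2}[\widehat S_X](t))\le L(1+|t|)^{-1}n^{-1}$ holds once $L\ge 4$; enlarging $L_{s,\gamma,\delta}$ if necessary places $S_o$ and every $S_{\bm\theta}$ in $\IW^s_{1/2}(L_{s,\gamma,\delta})$.

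The crux is the power bookkeeping in the second step: one must check that the degree-raising effect of $\mathcal S^s$ (a factor $K^{s+\gamma-1}$ per bump), the concentration on supports of length $K^{-1}$, the summation over $K$ disjoint bumps, and the normalisation $K^{-(s+\gamma-1)}$ cancel exactly, so that the resulting bound carries no power of $K$. A secondary technical point is to justify the identity $\sM_{1/2}[\mathcal S^j h]=(1/2+it)^j\sM_{1/2}[h]$ and the Plancherel reduction, i.e. to confirm that all functions involved lie in $\IL^1(\IR_+,x^{-1/2})\cap\IL^2(\IR_+,x^0)$ and that the integration-by-parts boundary terms vanish.
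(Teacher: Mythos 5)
Your proof is correct and follows essentially the same route as the paper's: the Mellin-side identity $\sM_{1/2}[\mathcal S^j h](t)=(1/2+it)^j\sM_{1/2}[h](t)$, the Plancherel reduction to an unweighted $\IL^2$-norm, the disjoint-support/scaling bookkeeping that cancels all powers of $K$, and the reduction of the variance condition to a uniform bound on $\IE(X_1)$ (the paper handles $S_o$ via Stirling's formula for $\Gamma(1/2+it)$ and bounds $\int x f_{\bm\theta}(x)\,dx\le 1+\delta C_\gamma$, but these are cosmetic differences). The only slip is numerical: $\sup_{t\in\IR}(1+|t|)/(1/4+t^2)=2+\sqrt{5}>4$, so your final requirement should read $L\ge 2+\sqrt{5}$ rather than $L\ge 4$, which is harmlessly absorbed by enlarging $L_{s,\gamma,\delta}$.
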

	For sake of simplicity we denote for a function $\varphi \in \IL^2(\IR_+,x^0)\cap \IL^1(\IR_+, x^{-1/2})$ the multiplicative convolution with $g$ by $\widetilde{\varphi} := \varphi *g$. Next, we see that for $y_2 \geq y_1 >0$,
	\begin{align*}
	\widetilde{f_o}(y_1) = \int_0^{\infty} g(x) x^{-1} \exp(-y_1/x) dx \geq \int_0^{\infty} g(x) x^{-1} \exp(-y_2/x) dx = \widetilde{f_o}(y_2)
	\end{align*} and thus $\widetilde{f_o}$ is monotonically decreasing. Additionally, we have that $\widetilde f_o(2)>0$, since otherwise $g=0$ almost everywhere.
	Exploiting \textit{Varshamov-Gilbert's
		lemma} (cf. \cite{Tsybakov2008}) in Lemma \ref{lemma:tsyb_vorb} we show
	that there exists $M\in\IN$ with $M\geq 2^{K/8}$ and a subset
	$\{\bm \theta^{(0)}, \dots, \bm \theta^{(M)}\}$ of $\{0,1\}^K$ with
	$\bm \theta^{(0)}=(0, \dots, 0)$ such that for all
	$j, l \in \{0, \dots, M\}$, $j \neq l$, the $\IL^2$-distance and the
	Kullback-Leibler divergence are bounded for $K\geq K_o(\gamma,\psi)$.
	\begin{lem}\label{lemma:tsyb_vorb}
		Let $K\geq \max\{K_o(\psi,\gamma),8\}$. Then there exists a subset $\{\bm \theta^{(0)}, \dots, \bm \theta^{(M)}\}$ of $\{0,1\}^K$  with $\bm \theta^{(0)}=(0, \dots, 0)$ such that $M\geq 2^{K/8}$ and for all $j, l \in \{ 0,\dots, M\} , j \neq l$, $\|S_{\bm\theta^{(j)}}-S_{\bm\theta^{(l)}}\|^2 \geq \frac{\|\psi^{(\gamma-1)}\|^2\delta^2}{16}  K^{-2s}$ and $ \text{KL}(\widetilde{f}_{\bm\theta^{(j)}}, \widetilde{f}_{\bm\theta^{(0)}}) \leq \frac{C_1(g)\|\psi\|^2}{\widetilde{f}_o(2)\log(2)} \delta^2 \log(M) K^{-2s-2\gamma+1}$ where $\text{KL}$ is the Kullback-Leibler-divergence.
	\end{lem}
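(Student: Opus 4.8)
The plan is to invoke Theorem~2.5 of \cite{Tsybakov2008}, which requires a large, pairwise well-separated family of hypotheses whose induced laws are uniformly close in Kullback--Leibler divergence. The combinatorial backbone is \emph{Varshamov--Gilbert's lemma}: since $K\geq 8$, there is a subset $\{\bm\theta^{(0)},\dots,\bm\theta^{(M)}\}\subseteq\{0,1\}^K$ with $\bm\theta^{(0)}=(0,\dots,0)$, cardinality $2^{K/8}\leq M\leq 2^K$, and pairwise Hamming distance $\rho(\bm\theta^{(j)},\bm\theta^{(l)})=\sum_{k=0}^{K-1}(\theta^{(j)}_{k+1}-\theta^{(l)}_{k+1})^2\geq K/8$ for $j\neq l$. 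The two asserted bounds are then treated separately, exploiting that the bumps have pairwise disjoint supports: the support of $\psi_{k,K}$, and hence of $\psi_{k,K,\gamma-1}=\mathcal S^{\gamma-1}[\psi_{k,K}]$ and of $\psi_{k,K,\gamma}$, is $[1+k/K,1+(k+1)/K]\subseteq[1,2]$, and these intervals are disjoint for $k=0,\dots,K-1$.

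For the $\IL^2$-separation I would use disjointness of supports to split the norm into single-bump contributions,
\begin{align*}
 \|S_{\bm\theta^{(j)}}-S_{\bm\theta^{(l)}}\|^2 = \delta^2 K^{-2s-2\gamma+2}\sum_{k=0}^{K-1}(\theta^{(j)}_{k+1}-\theta^{(l)}_{k+1})^2\,\|\psi_{k,K,\gamma-1}\|^2 ,
\end{align*}
and then establish the lower bound $\|\psi_{k,K,\gamma-1}\|^2\geq \tfrac12 K^{2\gamma-3}\|\psi^{(\gamma-1)}\|^2$. The leading term in $K$ of $\psi_{k,K,\gamma-1}$ is $(-1)^{\gamma-1}c_{\gamma-1,\gamma-1}x^{\gamma-1}K^{\gamma-1}\psi^{(\gamma-1)}(xK-K-k)$; substituting $u=xK-K-k$ and using $x\geq 1$ on the support shows this term alone has squared norm at least $K^{2\gamma-3}\|\psi^{(\gamma-1)}\|^2$, while every lower-order-in-$K$ term is smaller by a factor $O(K^{-1/2})$, so a reverse triangle inequality yields the claim once $K\geq K_o(\psi,\gamma)$. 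Inserting $\rho\geq K/8$ produces the advertised factor $\delta^2\|\psi^{(\gamma-1)}\|^2K^{-2s}/16$.

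For the Kullback--Leibler bound I would first pass to the $\chi^2$-divergence, $\mathrm{KL}(\widetilde f_{\bm\theta^{(j)}},\widetilde f_o)\leq\int(\widetilde f_{\bm\theta^{(j)}}-\widetilde f_o)^2/\widetilde f_o\,dy$. As $g$ is supported in $[0,1]$ by \textbf{[G2]} and each density bump in $[1,2]$, the multiplicative convolution $\widetilde f_{\bm\theta^{(j)}}-\widetilde f_o$ is supported in $[0,2]$, where the monotone function $\widetilde f_o$ is bounded below by $\widetilde f_o(2)>0$; hence $\mathrm{KL}\leq \widetilde f_o(2)^{-1}\|\widetilde f_{\bm\theta^{(j)}}-\widetilde f_o\|^2$. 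This $\IL^2$-norm I would compute in the Mellin domain: by the Plancherel equation \reff{eq:Mel:plan} and the convolution theorem, $\|\widetilde f_{\bm\theta^{(j)}}-\widetilde f_o\|^2=(2\pi)^{-1}\int|\sM_{1/2}[f_{\bm\theta^{(j)}}-f_o](t)|^2|\sM_{1/2}[g](t)|^2dt$. The structural identity $\sM_c[\mathcal S^{j}[h]](t)=(c+it)^{j}\sM_c[h](t)$, obtained by repeated integration by parts, turns the $\gamma$ applications of $\mathcal S$ hidden in the density bumps (noting $\sM_{1/2}[\psi_{k,K,\gamma}(x)x^{-1}]=\sM_{-1/2}[\psi_{k,K,\gamma}]$) into the factor $|{-1/2}+it|^{2\gamma}$, which against the decay $|\sM_{1/2}[g](t)|^2\leq C^2(1+t^2)^{-\gamma}$ of \textbf{[G2]} satisfies $|{-1/2}+it|^{2\gamma}(1+t^2)^{-\gamma}\leq 1$. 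What remains is $(2\pi)^{-1}\delta^2C^2K^{-2s-2\gamma+2}\int|\sum_k\theta^{(j)}_{k+1}\sM_{-1/2}[\psi_{k,K}](t)|^2dt$, and a second use of Plancherel together with disjoint supports and $\#\{k:\theta^{(j)}_{k+1}=1\}\leq K$ bounds the integral by $2\pi\|\psi\|^2$. Thus $\mathrm{KL}\leq C^2\widetilde f_o(2)^{-1}\|\psi\|^2\delta^2K^{-2s-2\gamma+2}$, and rewriting $K^{-2s-2\gamma+2}\leq 8(\log M/\log 2)K^{-2s-2\gamma+1}$ (valid since $\log M\geq (K/8)\log 2$) gives the stated form with $C_1(g)=8C^2$.

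I expect the main obstacle to be the two bump-norm estimates. On the lower-bound side one must verify that the highest $\mathcal S$-derivative term genuinely dominates $\|\psi_{k,K,\gamma-1}\|^2$ uniformly in $k$, which is precisely what determines the threshold $K_o(\psi,\gamma)$; on the KL side the whole argument hinges on the exact cancellation between the polynomial factor $|{-1/2}+it|^{2\gamma}$ generated by the operator identity and the spectral decay $(1+t^2)^{-\gamma}$ granted by \textbf{[G2]}, so that careful bookkeeping of the development points ($c=1/2$ for the risk and the convolution, $c=-1/2$ for the density bumps) and of the bounded support of $g$ is where the effort concentrates.
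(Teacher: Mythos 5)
Your proposal is correct and follows essentially the same route as the paper: Varshamov--Gilbert combined with the disjoint bump supports for the $\IL^2$ separation (where the leading term of $\mathcal S^{\gamma-1}[\psi_{k,K}]$ dominates once $K\geq K_o(\psi,\gamma)$, which is exactly the paper's $\Sigma$-remainder argument), and for the KL bound the $\chi^2$ reduction via the support/monotonicity of $\widetilde f_o$, Mellin--Plancherel, the identity $\sM_c[\mathcal S[h]](t)=(c+it)\sM_c[h](t)$ at development point $c=-1/2$, and the cancellation of $|-1/2+it|^{2\gamma}$ against the \textbf{[G2]} decay. The only cosmetic difference is that you correctly carry the factor $8$ coming from $M\geq 2^{K/8}$ into the constant $C_1(g)$, whereas the paper glosses over this by writing $M\geq 2^{K}$.
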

	Selecting $K=\lceil n^{1/(2s+2\gamma-1)}\rceil$ delivers
	\begin{align*}
	\frac{1}{M}\sum_{j=1}^M
	\text{KL}((\widetilde{f}_{\bm{\theta^{(j)}}})^{\otimes
		n},(\widetilde{f}_{\bm{\theta^{(0)}}})^{\otimes n})
	&= \frac{n}{M} \sum_{j=1}^M \text{KL}(
	\widetilde{f}_{\bm{\theta^{(j)}}},\widetilde{f}_{\bm{\theta^{(0)}}} )
	\leq c_{\psi, \delta,g,\gamma,f_o}  \log(M)
	\end{align*}
	where $c_{\psi, \delta,g,\gamma,f_o}< 1/8$ for all  $\delta\leq \delta_1(\psi, g, \gamma, f_o)$ and $M\geq 2$ for
	$n\geq n_{s,\gamma}:=\max\{8^{2s+1}, K_o(\gamma, \psi)^{2s+2\gamma+1}\}$. Thereby, we can use Theorem 2.5 of
	\cite{Tsybakov2008}, which in turn for any estimator $S$ of $\IW^s_{1/2}(L)$
	implies
	\begin{align*}
	\sup_{S\in\IW^s_{1/2}(L)}
	\IP\big(\|\widehat S-S\|^2\geq
	\tfrac{c_{\psi, \delta, \gamma}}{2}n^{-2s/(2s+2\gamma-1)} \big)\geq
	\tfrac{\sqrt{M}}{1+\sqrt{M}}\big(1-1/4
	-\sqrt{\frac{1}{4\log(M)}} \big) \geq 0.07.
	\end{align*}
	Note that the constant $c_{\psi, \delta,\gamma}$ does only depend on
	$\psi,\gamma $ and $\delta$. Hence,
	it is independent of the parameters $s$ and $n$. The claim
	of Theorem \ref{theorem:lower_bound} follows by using Markov's inequality,
	which completes the proof.
\end{proof}

\paragraph{Proofs of the lemmata}

\begin{proof}[Proof of Lemma \ref{mm:lem:den}]
	For any $h\in C_c^{\infty}(\IR)$ we can state that $\mathcal S[h]\in C_c^{\infty}(\IR)$ and thus $\mathcal S^j[h]\in C_c^{\infty}(\IR)$ for any $j\in \IN$. Further, for $h\in C_c^{\infty}(\IR)$, $\int_{-\infty}^{\infty} h'(x)dx=0$, which implies that for any $\delta >0$ and $\bm{\theta}\in\{0,1\}^K$ we have $\int_0^{\infty} f_{\bm{\theta}}(x)dx= 1$.\\
	Now due to the construction \eqref{equation:lobodens} of the functions $\psi_{k,K}$ we easily see that the function  $\psi_{k,K}$ has support on $[1+k/K,1+(k+1)/K]$ which leads to  $\psi_{k,K}$ and $ \psi_{l,K}$ having disjoint supports if $k\neq l$. Here, we want to emphasize that $\mathrm{supp}(\mathcal S[h]) \subseteq \mathrm{supp}(h)$ for all $h\in C_c^{\infty}(\IR)$ which implies that $\psi_{k,K,\gamma}$ and $ \psi_{l,K,\gamma}$ have disjoint supports if $k\neq l$, too.
	For $x\in [1,2]^c$ we have $f_{\bm{\theta}}(x)=\exp(-x)\geq 0$. Now let us consider the case $x\in[1,2]$. In fact there is $k_o\in\{0,\dots,K-1\}$ such that $x \in [1+k_o/K,1+ (k_o+1)/K]$ and hence
	\begin{equation*}
	S_{\bm{\theta}}(x)= S_{o}(x) + \theta_{k_o+1}\delta K^{-s-\gamma+1} x^{-1}\psi_{k_o,K, \gamma}(x) \geq \exp(-2)  - \delta 2^{\gamma} C_{\gamma, \infty} c_{\gamma}
	\end{equation*}
	since $\|\psi_{k,K,j}\|_{\infty} \leq 2^{j} C_{j, \infty} c_j K^{j}$ for any $k\in \{0,\dots, K-1\}$, $s\geq1$ and $j\in \IN$ where $c_j:= \sum_{i=1}^j c_{i,j}$. Choosing $\delta\leq \delta_o(\psi,\gamma)=\exp(-2)2^{-\gamma} (C_{\gamma,\infty} c_{\gamma})^{-1}$ ensures $f_{\bm{\theta}}(x) \geq 0$ for all $x\in \IR_+.$ 
\end{proof}

\begin{proof}[Proof of Lemma \ref{lemma:Lag_SobDen}]Our proof starts with the
	observation that for all $t\in \IR$ we have $\sM_{1/2}[S_o](t)=\Gamma(1/2+it)$.
	Now, by applying the Stirling formula (cf. \cite{BelomestnyGoldenshluger2020}) we get $|\Gamma(1/2+it)| \sim  \exp(-\pi/2 |t|)$, $|t|\geq 2$. Thus for every $s\in \IN$ there exists $L_{s}$ such that $|S_o|_s^2 \leq L $ for all $L\geq L_s$. \\
	Next we consider $|S_o-S_{\bm{\theta}}|_s$. Let us therefore define first $\Psi_K:= \sum_{k=0}^{K-1} \theta_{k+1} \psi_{k,K}$ and $\Psi_{K,j}:= \mathcal S^j [\Psi_K]$ for $j\in \IN$. Then we have
	$|S_o-S_{\bm{\theta}}|_s^2= \delta^2 K^{-2s-2\gamma+2} |\Psi_{K,\gamma-1}|_s^2$ where $|\, . \,|_s$ is defined in \reff{eq:ani:mell:sob}. Now since for any $j\in \IN$, $\mathrm{supp}(\Psi_{K,j}) \subset [1,2], \|\Psi_{K,j}\|_{\infty} < \infty$ we have that the Mellin transform of $\Psi_{K,j}$ is defined for any $c\in (0,\infty)$. By a recursive application of the integration by parts we deduce that $|\sM_{1/2}[\Psi_{K,s+\gamma-1}](t)|^2 = (1/4+t^2)^s|\sM_{1/2}[\Psi_{k,\gamma-1}](t)|^2$, whence
	\begin{align*}
	|\Psi_{k,\gamma-1}|_s^2&\leq C_s\int_{-\infty}^{\infty} |\sM_{1/2}[\Psi_{K,s+\gamma-1}](t)|^2 dt =C_s\int_0^{\infty}  |\Psi_{K,s+\gamma-1}(x)|^2 dx
	\end{align*}
	by the Parseval formula, see equation \reff{eq:Mel:plan}, where $C_s>0$ is a positive constant. Since the $\psi_{k,K}$ have disjoint support for different values of $k$ we reason that $|\Psi_{k,\gamma-1}|_s^2 \leq C_s \sum_{k=0}^{K-1} \theta_{k+1}^2 \int_0^{\infty}  |\mathcal S^{\gamma-1+s}[\psi_{k,K}](x)|^2dx$.
	Applying Jensen's inequality and the fact that $\mathrm{supp}(\psi_{k,K})\subset [1,2]$, we obtain to
	\begin{align*}
	|\Psi_{k,\gamma-1}|_s^2&\leq C_{(\gamma,s)} \sum_{k=0}^{K-1} \sum_{j=1}^{\gamma+s-1} c_{j,\gamma-1+s}^2 \int_1^{2} x^{2j} K^{2j} \psi^{(j)}(xK-K-k)^2 dx\\
	&\leq  C_{(\gamma, s)} K^{2(\gamma-1+s)} \sum_{k=0}^{K-1} \sum_{j=1}^{\gamma+s-1} c_{j,\gamma+s}^2  4^j C_{\psi,s,\gamma}^2 K^{-1} \leq  C_{(\gamma, s)} K^{2(\gamma-1+s)}.
	\end{align*} 
	Thus, $|S_o-S_{\bm{\theta}}|_s^2 \leq C_{( s, \gamma,\delta)}$ and  $|S_{\bm{\theta}}|_s^2 \leq 2(|S_o-S_{\bm{\theta}}|_s^2 + |S_o|_s^2) \leq 2(C_{(s, \gamma,\delta)}+ L_s) =: L_{s, \gamma,\delta,1}$.
	By Corollary \ref{cor:upper:minimax} it is sufficient to show that $\int_0^{\infty} x f_{\bm \theta}(x)dx \leq L_{s, \gamma,\delta, 2}$. In fact,
	\begin{align*}
	   \int_{0}^{\infty} x f_{\bm \theta}(x)dx = 1 + \delta K^{-s-\gamma+1} \sum_{k=0}^{K-1} \int_{1+k/K}^{1+(k+1)/K} \psi_{k,K,\gamma}(x) dx \leq 1+\delta C_{\gamma}
	\end{align*}
	since $\|\psi_{k,K,\gamma}\|_{\infty} \leq 2^{\gamma} C_{\gamma, \infty} c_{\gamma} K^{\gamma}=C_{\gamma} K^{\gamma}$, cf. Proof of Lemma \ref{mm:lem:den}. The claim follows by choosing $L_{s,\gamma,\delta}=\max\{L_{s,\gamma,\delta,1},L_{s,\gamma,\delta,2}\}$.
\end{proof}
\begin{proof}[Proof of Lemma \ref{lemma:tsyb_vorb}]  $\ $\\
	Using the fact that the functions $(\psi_{k,K, \gamma})_{k\in\{0,\dots,K-1\}}$ with different index $k$ have disjoint supports we get
	\begin{align*}
	\| S_{\bm{\theta}}-S_{\bm{\theta}'}\|^2&= \delta^2 K^{-2(s+\gamma-1)} \| \sum_{k=0}^{K-1} ( \theta_{k+1}-\theta'_{k+1}) \psi_{k,K, \gamma-1}\|^2\\
	& = \delta^2  K^{-2(s+\gamma-1)}\rho(\bm \theta, \bm \theta')  \|\psi_{0,K, \gamma-1}\|^2
	\end{align*}
	with $\rho(\bm \theta, \bm \theta'):= \sum_{j=0}^{K-1} \Ii_{\{\bm \theta_{j+1} \neq \bm \theta'_{j+1}\}}$, the \textit{Hamming distance}. Now the first claim follows by showing that  $ \|\psi_{0,K, \gamma-1}\|^2 \geq \frac{K^{2\gamma-3}\|\psi^{(\gamma-1)}\|^2 }{2} $ for $K$ big enough.
	To do so, we observe that 
	\begin{align*} \| \psi_{0,K, \gamma-1} \|^2 = \sum_{i,j \in\{1,\dots,\gamma-1\}} c_{j,\gamma-1}c_{i,\gamma-1} \int_0^{\infty} x^{j+i+1} \psi_{0,K}^{(j)}(x)  \psi_{0,K}^{(i)}(x)dx. \end{align*}
	Defining $\Sigma:= \| \psi_{0,K,\gamma-1}\|^2 -  \int_0^{\infty}(x^{\gamma-1} \psi_{0,K}^{(\gamma-1)}(x))^2dx$ we see that 

	\begin{align}\label{equation:low_l2_gamma}
	\hspace*{-1cm}\| \psi_{0,K, \gamma-1} \|^2 &= \Sigma + \int_0^{\infty}(x^{\gamma-1} \psi_{0,K}^{(\gamma-1)}(x))^2 dx
	\geq \Sigma + K^{2\gamma-3} \| \psi^{(\gamma-1)} \|^2 \geq \frac{ K^{2\gamma-3} \| \psi^{(\gamma-1)} \|^2}{2}
	\end{align}
	as soon as $|\Sigma|\leq \frac{ K^{2\gamma-3} \| \psi^{(\gamma-1)} \|^2}{2}$. This is obviously true as soon as $K \geq K_{o}(\gamma, \psi)$ and thus $	\| S_{\bm{\theta}}-S_{\bm{\theta}'}\|^2 \geq  \frac{\delta^2\|\psi^{(\gamma-1)}\|^2 }{2}K^{-2s-1}\rho(\bm{\theta},\bm{\theta}')$ for $K\geq K_o(\psi, \gamma)$.\\
	Now we use the \textit{Varshamov-Gilbert Lemma} (cf. \cite{Tsybakov2008}) which states that for $K \geq 8$ there exists a subset $\{\bm \theta^{(0)}, \dots, \bm\theta^{(M)}\}$ of $\{0,1\}^K$ with $\bm \theta^{(0)}=(0, \dots, 0)$ such that $\rho(\bm \theta^{(j)}, \bm \theta^{(k)}) \geq K/8$ for all $j ,k\in \{ 0, \dots, M\}, j\neq k $ and $M\geq 2^{K/8}$. Therefore, $\| S_{\bm\theta^{(j)}}- S_{\bm\theta^{(l)}}\|_{\omega}^2 \geq \frac{\|\psi^{(\gamma-1)}\|^2\delta^2}{16}  K^{-2s}.
	$\\
	For the second part we have $f_o=f_{\bm\theta^{(0)}}$, and by using $\text{KL}(\widetilde{f}_{\bm \theta}, \widetilde{f}_o) \leq \chi^2(\widetilde{f}_{\bm\theta},\widetilde{f}_o):= \int_{\IR_+} (\widetilde{f}_{\bm\theta}(x) -\widetilde{f}_o(x))^2/\widetilde{f}_o(x) dx$ it is sufficient to bound the $\chi^2$-divergence. We notice that $\widetilde{f}_{\bm\theta} -\widetilde{f}_o$ has support on $[0,2]$ since $f_{\bm{\theta}}-f_o$ has support on $[1,2]$ and $g$ has support on $[0,1]$. In fact for $y>2$, $\widetilde{f}_{\bm\theta}(y) -\widetilde{f}_o(y)=\int_y^{\infty} (f_{\bm{\theta}}-f_o)(x)x^{-1} g(y/x)dx =0$. Let $\Psi_{K,\gamma}:= \sum_{k=0}^{K-1} \theta_{k+1} \psi_{k,K, \gamma}= \mathcal S^{\gamma}[\sum_{k=0}^{K-1} \theta_{k+1} \psi_{k,K}]=: \mathcal S^{\gamma}[\Psi_K]$. Now by using the  compact support property and a single substitution we get
	\begin{align*}
	\chi^2(\widetilde{f}_{\bm\theta},\widetilde{f}_o)
	\leq \widetilde{f}_o(2)^{-1} \|\widetilde{f}_{\bm \theta}- \widetilde{f}_o\|^2 &=\widetilde{f}_o(2)^{-1}  \delta^2 K^{-2s-2\gamma+2} \| \widetilde{\omega^{-1}\Psi}_{K,\gamma}\|^2.
	\end{align*}
	Let us now consider $\| \widetilde{\omega^{-1}\Psi}_{K,\gamma}\|^2$. In the first step we see by application of the Parseval equality that $\|\widetilde{\omega^{-1}\Psi}_{K,\gamma}\|^2 = \frac{1}{2\pi} \int_{-\infty}^{\infty} |\sM_{1/2}[\widetilde{\omega^{-1}\Psi}_{K,y}](t)|^2dt$. Now for  $t\in \IR$, we see by using the multiplication theorem for Mellin transforms  that $\sM_{1/2}[\widetilde{\omega^{-1}\Psi}_{K,\gamma}](t)= \sM_{1/2}[ g](t) \cdot \sM_{1/2}[\omega^{-1}\mathcal S^{\gamma}[\Psi_{K}]](t)$.  Again we have $\sM_{1/2}[ \mathcal \omega^{-1}S^{\gamma}[\Psi_{K}]](t)=(-1/2+it)^{\gamma} \sM_{-1/2}[\Psi_{K}](t)$. Together with assumption \textbf{[G1']} we obtain
	\begin{align*}
	\|\widetilde{\omega^{-1}\Psi}_{K,\gamma}\|^2  \leq \frac{C_1(g)}{2\pi}\int_{-\infty}^{\infty} |\sM_{-1/2}[ \Psi_{K}](t)|^2 dt= C_1(g) \| \omega^{-1}\Psi_K\|^2  \leq C_1(g) \|\psi\|^2. 
	\end{align*}
	Since $M \geq 2^K$ we have
	$\text{KL}(\widetilde{f}_{\bm\theta^{(j)}},\widetilde{f}_{\bm\theta^{(0)}})\leq \frac{C_1(g)\|\psi\|^2}{\widetilde f_o(2)\log(2)} \delta^2 \log(M) K^{-2s-2\gamma+1}.$
\end{proof}

\subsection{Proof of Section \ref{sec_datadriven}}

\begin{proof}[Proof of Theorem \ref{dd:thm:ada}] 
	Let us define the nested subspaces $(U_k)_{k\in \IR_+}$ by $U_k:=\{h\in \IL^2(\IR_+,x^0): \forall |t|\geq k: \sM_{1/2}[h](t)=0\}$. For any $h \in U_k$ we consider the empirical contrast 
	\begin{align*}
	\gamma_n(h) = \|h\|^2 -2  \frac{1}{2\pi} \int_{-\infty}^{\infty} \widehat{\sM}(t)\frac{\sM_{1/2}[h](-t)}{(1/2+it)\sM_{3/2}[g](t)}dt= \|h\|^2 - 2 n^{-1} \sum_{j=1}^n \nu_h(Y_j) 
	\end{align*}
	 with $\nu_h(Y_j):= \frac{1}{2\pi} \int_{-\infty}^{\infty} Y_j^{1/2+it} \frac{\sM_{1/2}[h](-t)}{(1/2+it)\sM_{3/2}[g](t)}dt$. It can easily be seen that $\widehat S_k = \argmin\{\gamma_n(h):h\in U_k\}$ with $\gamma_n(\widehat S_k)=-\|\widehat S_k\|^2$.
	For $h\in U_k$  define the centered empirical process $\bar{\nu}_h:= n^{-1} \sum_{j=1}^n \nu_h(Y_j) - \langle h, S \rangle$. Then we have for $h_1,h_2 \in U_k$,
	\begin{align}\label{dd:eq:cont}
	\gamma_n(h_1)-\gamma_n(h_2) = \|h_1-S\|^2 - \|h_2-S\|^2 - 2 \bar{\nu}_{h_1-h_2}.
	\end{align}
	Now since $\gamma_n(\widehat S_k) \leq \gamma_n(S_k)$ and by the definition of $\widehat k$ we have $\gamma_n(\widehat S_{\widehat k})- \widehat{\mathrm{pen}}(\widehat k) \leq \gamma_n(\widehat S_{ k})- \widehat{\mathrm{pen}}( k) \leq \gamma_n(S_k)- \widehat{\mathrm{pen}}(k)$ for any $k\in \mathcal K_n$. Now using \eqref{dd:eq:cont}, 
	\begin{align*}
	\| S- \widehat S_{\widehat k} \|^2 \leq \|S-S_k\|^2 + 2 \bar\nu_{\widehat S_{\widehat k}- S_k} + \widehat{\mathrm{pen}}( k)- \widehat{\mathrm{pen}}(\widehat k).
	\end{align*}
	First we note that $U_{k_1} \subseteq U_{k_2}$ for $k_1 \leq k_2$. Let us now denote by $a\vee b:= \max\{a,b\}$ and define for all $k\in \mathcal K_n$ the unit balls $B_k:=\{h\in U_k: \|h\| \leq 1\}$. Next we deduce from $2ab \leq a^2+b^2$ that $ 2\bar\nu_{\widehat S_{\widehat k}-S_k}\leq 4^{-1}\|\widehat  S_{\widehat k} - S_k\|^2 + 4 \sup_{h \in B_{\widehat k \vee k} }\bar \nu_h^2$. Furthermore, we see that $4^{-1} \|\widehat  S_{\widehat k} - S_k\|^2  \leq 2^{-1} (\|\widehat  S_{\widehat k} - S\|^2 + \|S- S_k\|^2)$. Putting all the facts together and defining 
	\begin{align}\label{eq:delta}
	\hspace*{-0.8cm}p(\widehat k \vee k):= 24 \sigma_{Y}\Delta_g(\widehat k \vee k)n^{-1} 
	\end{align} 
	 we have
	\begin{align*}
	\hspace*{-1cm}	\| S- \widehat S_{\widehat k} \|^2 \leq &3\|S-S_k\|^2 + 8 \big(\sup_{h \in B_{\widehat k \vee k} }\bar\nu_{h}^2 -p(k\vee \widehat k) \big)_+
	+8p(\widehat k \vee k) + 2\widehat{\mathrm{pen}}(k)- 2\widehat{\mathrm{pen}}(\widehat k)
	\end{align*}
	The decomposition $\bar\nu_h =\bar\nu_{h,in}+\bar\nu_{h,de}$ where
	\begin{align*}
	    \bar\nu_{h,in}:= n^{-1} \sum_{j=1}^n (\nu_h(Y_j)- \IE_{|X}(\nu_h(Y_j))) \text{ and } \bar\nu_{h, de}=n^{-1} \sum_{j=1}^n \IE_{|X}(\nu_h(Y_j))) - \IE(\nu_h(Y_j))
	\end{align*}
    implies the inequality
	\begin{align*}
	\hspace*{-1cm}	\| S- \widehat S_{\widehat k} \|^2 \leq &3\|S-S_k\|^2 + 16 \big(\sup_{h \in B_{\widehat k \vee k} }\bar\nu_{h,in}^2 -\frac{1}{2}p(k\vee \widehat k) \big)_+\\
	&+16\sup_{h \in B_{\widehat k \vee k} }\bar\nu_{h,de}^2+8p(\widehat k \vee k)+ 2\widehat{\mathrm{pen}}(k)- 2\widehat{\mathrm{pen}}(\widehat k).
	\end{align*}
	Assuming that $\chi \geq 96$, $4p(\widehat k \vee k) \leq \mathrm{pen}(k)+ \mathrm{pen}(\widehat k)$. Thus,
	\begin{align*}
	\| S- \widehat S_{\widehat k} \|^2 \leq &6\big(\|S-S_k\|^2 +\mathrm{pen}(k) \big) +16 \max_{k\in \mathcal K_n}\big(\sup_{h \in B_{k} }\bar\nu_{h,in}^2 -\frac{1}{2}p( k) \big)_+\\
	&  + 16\max_{k'\in \mathcal K_n} \sup_{h\in B_{k'}} \bar{\nu}_{h,de}^2 + 2(\widehat{\mathrm{pen}}(k)- 2\mathrm{pen}(k))+ 2(\mathrm{pen}(\widehat k)-\widehat{ \mathrm{pen}}(\widehat k))_+.
	\end{align*}
	We will use the following Lemmata which will be proven afterwards.
		\begin{lem}\label{dd:lem:1}
		Under the assumption of Theorem \ref{dd:thm:ada} we have 
		\begin{align*}
		    \IE(\max_{k\in \mathcal K_n} \sup_{h\in B_k} \bar{\nu}_{h,de}^2) \leq \frac{1}{2\pi}\int_{-n}^n \Var(\sM_{1/2}[\widehat S_X](t))dt
		\end{align*}
	\end{lem}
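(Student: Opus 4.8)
The plan is to show that the ``deterministic'' part $\bar\nu_{h,de}$ does not feel the measurement error at all, and in fact collapses to an honest $\IL^2$-inner product living on the $X$-side. First I would compute the conditional expectation $\IE_{|X}(\nu_h(Y_j))$. Writing $Y_j=X_jU_j$ with $U_j$ independent of $X$ and $\IE(U_j^{1/2+it})=\sM_{3/2}[g](t)$, the factor $\sM_{3/2}[g](t)$ appearing in $\IE_{|X}(Y_j^{1/2+it})=X_j^{1/2+it}\sM_{3/2}[g](t)$ cancels \emph{exactly} the $\sM_{3/2}[g](t)$ in the denominator of $\nu_h$, so that
\[
\IE_{|X}(\nu_h(Y_j))=\frac{1}{2\pi}\int_{-\infty}^\infty X_j^{1/2+it}\frac{\sM_{1/2}[h](-t)}{1/2+it}\,dt.
\]
Subtracting the (unconditional) expectation and abbreviating $\widehat\sM_X(t):=n^{-1}\sum_{j=1}^n X_j^{1/2+it}$, for which $\IE(\widehat\sM_X(t))=\sM_{3/2}[f](t)$, this yields
\[
\bar\nu_{h,de}=\frac{1}{2\pi}\int_{-\infty}^\infty\frac{\widehat\sM_X(t)-\sM_{3/2}[f](t)}{1/2+it}\,\sM_{1/2}[h](-t)\,dt.
\]

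Next I would invoke identity \reff{eq:mel:surv}, namely $\sM_{1/2}[\widehat S_X](t)=(1/2+it)^{-1}\widehat\sM_X(t)$, together with $\sM_{1/2}[S](t)=(1/2+it)^{-1}\sM_{3/2}[f](t)$, to recognise the quotient above as $\sM_{1/2}[\widehat S_X-S](t)$. Since $\widehat S_X-S$ is real-valued we have $\sM_{1/2}[h](-t)=\overline{\sM_{1/2}[h](t)}$ for real $h$, so the Plancherel equation \reff{eq:Mel:plan} collapses the integral to
\[
\bar\nu_{h,de}=\langle \widehat S_X-S,\,h\rangle.
\]
This manipulation is legitimate because Proposition \ref{prop:surv} guarantees $\widehat S_X,S\in\IL^2(\IR_+,x^0)$ under the moment assumptions of Theorem \ref{dd:thm:ada} (which in particular force $\IE(X_1)<\infty$), and this $\IL^2$-membership also justifies the Fubini interchange of the $t$-integral with the (conditional) expectation.

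With $\bar\nu_{h,de}$ exhibited as a linear functional of $h$, the supremum over the unit ball $B_k$ of $U_k$ is simply the norm of the orthogonal projection $P_{U_k}$ of $\widehat S_X-S$ onto $U_k$. As $U_k$ consists precisely of functions whose Mellin transform vanishes off $[-k,k]$, this projection has Mellin transform $\sM_{1/2}[\widehat S_X-S]\,\Ii_{[-k,k]}$, and another application of Plancherel gives
\[
\sup_{h\in B_k}\bar\nu_{h,de}^2=\|P_{U_k}(\widehat S_X-S)\|^2=\frac{1}{2\pi}\int_{-k}^k\big|\sM_{1/2}[\widehat S_X-S](t)\big|^2\,dt.
\]
Finally, since $\mathcal K_n\subseteq\{1,\dots,n\}$ and the right-hand side is non-decreasing in $k$, I would bound the maximum over $\mathcal K_n$ by the integral over $[-n,n]$, take expectations, and use that $\sM_{1/2}[\widehat S_X](t)$ is an unbiased estimator of $\sM_{1/2}[S](t)$ to rewrite $\IE\big|\sM_{1/2}[\widehat S_X-S](t)\big|^2=\Var(\sM_{1/2}[\widehat S_X](t))$, which is exactly the asserted bound.

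There is no genuinely hard step here; the decisive observation is the cancellation of $\sM_{3/2}[g]$ in the conditional expectation, which strips away the deconvolution and reduces $\bar\nu_{h,de}$ to a quantity built only from the (in-principle observable) process $X$. Everything else is a projection identity plus Plancherel, with the only mild care being the integrability/Fubini and Plancherel justifications supplied by Proposition \ref{prop:surv}.
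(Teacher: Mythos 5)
Your proof is correct and takes essentially the same route as the paper's: the decisive cancellation of $\sM_{3/2}[g](t)$ under $\IE_{|X}$, then a Plancherel/Cauchy--Schwarz bound over the unit ball $B_k$, monotonicity in $k$ over $\mathcal K_n\subseteq\{1,\dots,n\}$, and finally taking expectations to identify the variance term. The only cosmetic difference is that you recognise $\bar\nu_{h,de}=\langle \widehat S_X-S,h\rangle$ and compute the supremum exactly as a projection norm, whereas the paper applies Cauchy--Schwarz directly to the same integral.
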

	\begin{lem}\label{dd:lem:2}
		Under the assumption of Theorem \ref{dd:thm:ada} we have 
	\begin{align*}
	\IE\big(\max_{k\in \mathcal K_n} (\sup_{h \in B_{k} }\bar \nu_{h,in}^2 -\frac{1}{2}p(k) \big)_+  \leq   C(g)\big(\frac{\sigma_X}{n} + \frac{\IE(X_1^{5/2})}{\sigma_X^{3/2} n} + \frac{\Var(\widehat\sigma_X)}{\sigma_X}\big) 
	\end{align*}
	\end{lem}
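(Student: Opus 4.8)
The plan is to treat $\bar\nu_{h,in}$ as a centred empirical process \emph{conditionally on} the vector $X=(X_1,\dots,X_n)$. Since $U_1,\dots,U_n$ are i.i.d.\ and independent of $X$, the variables $Y_1,\dots,Y_n$ are conditionally independent given $X$, so that $\bar\nu_{h,in}=n^{-1}\sum_{j=1}^n(\nu_h(Y_j)-\IE_{|X}(\nu_h(Y_j)))$ is a genuine centred sum of independent summands under $\IE_{|X}$; crucially, the dependence structure among the $X_j$ plays no role here, which is why this lemma carries no mixing or functional--dependence hypothesis. Using the Plancherel identity \reff{eq:Mel:plan} I would first write $\nu_h(y)=\langle h,\phi_{k,y}\rangle$ for the kernel $\phi_{k,y}\in U_k$ with $\sM_{1/2}[\phi_{k,y}](t)=\Ii_{[-k,k]}(t)\,y^{1/2+it}\big((1/2+it)\sM_{3/2}[g](t)\big)^{-1}$, so that $\sup_{h\in B_k}\bar\nu_{h,in}^2=\|\Theta_k\|^2$ with $\Theta_k:=n^{-1}\sum_{j=1}^n(\phi_{k,Y_j}-\IE_{|X}\phi_{k,Y_j})\in U_k$; after passing to a countable dense subset of $B_k$ this makes Lemma \ref{tal:re} applicable.

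Next I would apply the conditional version of Talagrand's inequality \reff{tal:re3} for each fixed $k\in\mathcal K_n$. By \reff{eq:Mel:plan} and $\Var_{|X}(Y_j^{1/2+it})=X_j\Var(U_1^{1/2+it})\le X_j\IE(U_1)$, the expected conditional squared supremum satisfies $\IE_{|X}\|\Theta_k\|^2\le \widehat\sigma_X\,\IE(U_1)\,\Delta_g(k)\,n^{-1}$. The decisive observation is that the deterministic choice $\Psi^2=2\sigma_Y\Delta_g(k)n^{-1}$ gives $6\Psi^2=\tfrac12 p(k)$ exactly (recall $\sigma_Y=\sigma_X\IE(U_1)$ and \reff{eq:delta}), and the Talagrand hypothesis $\IE_{|X}(\sup_{h\in B_k}|\bar\nu_{h,in}|)\le\Psi$ then holds precisely on the event $\Omega_0:=\{\widehat\sigma_X\le 2\sigma_X\}$. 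For the weak variance $\tau$ I would compute the conditional covariance kernel $X_j^{1+i(t-s)}\sM_2[g](t-s)\big/\big((1/2+it)(1/2-is)\sM_{3/2}[g](t)\overline{\sM_{3/2}[g](s)}\big)$ and bound its operator norm by Schur's test; under \textbf{[G1]} this yields $\tau\lesssim \widehat\sigma_X\IE(U_1)(1+k)^{\max\{0,2\gamma-2\}}$, so that $n\Psi^2/\tau\gtrsim k^{\min\{1,2\gamma-1\}}\to\infty$. It is essential to use this operator--norm bound rather than the crude trace bound $\tau\le \widehat\sigma_X\IE(U_1)\Delta_g(k)$, which would collapse the exponent.

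I would then split $\IE\big(\max_{k\in\mathcal K_n}(\sup_{h\in B_k}\bar\nu_{h,in}^2-\tfrac12 p(k))_+\big)$ over $\Omega_0$ and $\Omega_0^c$. On $\Omega_0$ we have $(\sup_{h\in B_k}\bar\nu_{h,in}^2-\tfrac12 p(k))_+\le(\sup_{h\in B_k}\bar\nu_{h,in}^2-6\Psi^2)_+$, so \reff{tal:re3} applies; summing the two resulting exponential contributions over $k\in\mathcal K_n$, the polynomial growth $\Delta_g(k)\asymp k^{2\gamma-1}$ is dominated by the exponential decay in $k$ and produces the $C(g)\sigma_X n^{-1}$ term (from the $\tau$--term) and the $C(g)\IE(X_1^{5/2})\sigma_X^{-3/2}n^{-1}$ term (from the $\psi$--term). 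On $\Omega_0^c$ I would bound the maximum crudely by $\|\Theta_{k^\ast}\|^2$ for the largest $k^\ast\in\mathcal K_n$ and invoke Chebyshev, $\IP(\Omega_0^c)\le\Var(\widehat\sigma_X)\sigma_X^{-2}$, which yields the remaining $C(g)\Var(\widehat\sigma_X)\sigma_X^{-1}$ term.

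The main obstacle is the unboundedness of $Y$: the uniform bound $\psi$ in Talagrand's inequality is infinite as it stands, since $\sup_y|\nu_h(y)|\le y^{1/2}\Delta_g(k)^{1/2}$ is not bounded in $y$. I would resolve this by truncating $Y_j$ at a level $a_n$ chosen to balance the subexponential Talagrand remainder against the tail, controlling the truncation residual through the moment assumption $\IE(Y_1^{5/2})<\infty$ (equivalently $\IE(X_1^{5/2})<\infty$); the exponent $5/2$ and the $\sigma_X^{-3/2}$ scaling in the final bound are exactly what this balancing produces. The hypothesis $\|xg\|_\infty<\infty$ enters through the Schur estimate for $\tau$ and the control of $\sM_2[g]$. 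The only other delicate point is the interplay between the \emph{data-dependent} effective threshold (through $\widehat\sigma_X$ inside $\IE_{|X}\|\Theta_k\|^2$) and the \emph{deterministic} threshold $\tfrac12 p(k)$ required by Talagrand, which the $\Omega_0/\Omega_0^c$ decomposition is designed precisely to reconcile.
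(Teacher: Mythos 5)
Your plan coincides structurally with the paper's proof: you condition on $X$ so that Talagrand's inequality (Lemma \ref{tal:re}) applies to the conditionally independent summands, you truncate $Y_j$ to make the sup-norm constant $\psi$ finite and control the truncation residual through the $5/2$-moment (this residual, i.e.\ the paper's tail term with $p=3$, is what produces $\IE(X_1^{5/2})\sigma_X^{-3/2}n^{-1}$ --- not the $\psi$-exponential of Talagrand, which only feeds into the $\sigma_X/n$ term; your paragraphs three and four contradict each other slightly on this point), you insist on an operator-norm bound for $\tau$ rather than the trace bound $\Delta_g(k)$, and you reconcile the random threshold with the deterministic penalty via a deviation event for $\widehat\sigma_X$ and Chebyshev. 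The paper performs this last step with the intermediate random penalty $\widetilde p(k)=24\sigma_U\widehat\sigma_X\Delta_g(k)n^{-1}$ instead of your event $\{\widehat\sigma_X\le 2\sigma_X\}$, but the two devices are interchangeable bookkeeping and both yield the $\Var(\widehat\sigma_X)/\sigma_X$ term.

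There is, however, one genuine gap: the tool you name for the crucial $\tau$ bound does not deliver it on the whole range of $\gamma$ the lemma must cover. Schur's test controls the operator norm of the covariance kernel $K(t_1,t_2)=\sM_2[g](t_1-t_2)G_k(t_1)^{1/2}G_k(t_2)^{1/2}$ by row/column $\IL^1$-norms, hence requires $\int_{-2k}^{2k}|\sM_2[g](s)|\,ds$ to stay bounded. The stated hypotheses give no decay of $\sM_2[g]$ at all ($\|xg\|_\infty<\infty$ only gives boundedness of this transform, and \textbf{[G1]} is invoked at the development point $3/2$); even granting \textbf{[G1]}-type decay $|\sM_2[g](s)|\asymp(1+s^2)^{-\gamma/2}$, integrability needs $\gamma>1$. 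For $1/2<\gamma<1$ Schur's test then only yields $\tau\lesssim\widehat\sigma_X\,k^{1-\gamma}$, so $n\Psi^2/\tau\asymp k^{3\gamma-2}$, which for $\gamma\le 2/3$ no longer diverges, and the sum over $k\in\mathcal K_n$ of the Talagrand remainders is no longer $O(n^{-1})$; the argument collapses precisely in part of the regime $\gamma>1/2$ that Theorem \ref{dd:thm:ada} allows. The paper avoids this entirely by exploiting positive definiteness rather than integrability: it writes
\begin{align*}
\Var_{|X}\bigl(\nu_h(Y_j)\Ii_{(0,c_n)}(Y_j^{1/2})\bigr)\le X_j\int_0^\infty ug(u)\,\Bigl|\sM_{1/2}^{-1}\Bigl[\Ii_{[-k,k]}(t)\tfrac{\sM_{1/2}[h](t)}{(1/2-it)\sM_{3/2}[g](-t)}\Bigr](u)\Bigr|^2du,
\end{align*}
i.e.\ the kernel is the Fourier transform of the positive measure $ug(u)\,du$, so the quadratic form is a weighted $\IL^2$-norm on the $u$-side; bounding $ug(u)\le\|xg\|_\infty$ and applying Plancherel gives $\tau\le\widehat\sigma_X\|xg\|_\infty\|G_k\|_\infty$ with $\|G_k\|_\infty\asymp k^{(2\gamma-2)_+}$ for every $\gamma>1/2$, using exactly the hypothesis $\|xg\|_\infty<\infty$. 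Your claimed bound for $\tau$ is the correct one, but it must be proved by this Bochner/Plancherel argument, not by Schur's test.
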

	\begin{lem}\label{dd:lem:3}
	Under the assumption of Theorem \ref{dd:thm:ada} we have 
	\begin{align*}
	    \IE((\mathrm{pen}(\widehat k)-\widehat{\mathrm{pen}}(\widehat k) )_+)\leq  4\chi \frac{\IE(Y_1^2)}{\sigma_Y n} + 4\chi \frac{\sigma_U}{\sigma_X}\Var(\widehat{\sigma}_X).
	\end{align*}
	\end{lem}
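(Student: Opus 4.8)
The key structural observation is that the factor multiplying $\Delta_g(\widehat k)$ in $\mathrm{pen}(\widehat k)-\widehat{\mathrm{pen}}(\widehat k)$ is free of the cut-off, so the randomness of the data-driven choice $\widehat k$ can be stripped out by a deterministic bound before any expectation is taken. First I would write, for every realisation of the data,
\begin{align*}
\mathrm{pen}(\widehat k)-\widehat{\mathrm{pen}}(\widehat k)= \chi\,\Delta_g(\widehat k)\,n^{-1}\big(\sigma_Y-2\widehat\sigma_Y\big).
\end{align*}
Since $\chi\,\Delta_g(\widehat k)\,n^{-1}\geq 0$ and, by the construction of $\mathcal K_n$, every $k\in\mathcal K_n$ satisfies $\Delta_g(k)\leq n$, we have $\chi\,\Delta_g(\widehat k)\,n^{-1}\leq\chi$, which yields the pointwise estimate $(\mathrm{pen}(\widehat k)-\widehat{\mathrm{pen}}(\widehat k))_+\leq \chi\,(\sigma_Y-2\widehat\sigma_Y)_+$. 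This disposes of $\widehat k$ entirely.

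Next I would control $(\sigma_Y-2\widehat\sigma_Y)_+$ by a quadratic deviation. Setting $z:=\sigma_Y-\widehat\sigma_Y$ we have $\sigma_Y-2\widehat\sigma_Y=2z-\sigma_Y$, and the elementary inequality $(2z-a)_+\leq 4z^2/a$, valid for all $z\in\IR$ and $a>0$ (its verification amounts to the discriminant of $4z^2-2az+a^2$ being negative), gives
\begin{align*}
(\sigma_Y-2\widehat\sigma_Y)_+\leq \frac{4(\widehat\sigma_Y-\sigma_Y)^2}{\sigma_Y}.
\end{align*}
Because $\widehat\sigma_Y$ is an unbiased estimator of $\sigma_Y=\IE(Y_1)$ (stationarity of $(Y_j)$), taking expectations produces $\IE\big((\mathrm{pen}(\widehat k)-\widehat{\mathrm{pen}}(\widehat k))_+\big)\leq 4\chi\,\Var(\widehat\sigma_Y)/\sigma_Y$.

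Finally I would decompose $\Var(\widehat\sigma_Y)$ along the conditioning on $(X_j)$, exactly as in the proof of Theorem \ref{thm:upp_bound}. Using $\IE_{|X}(Y_j)=\sigma_U X_j$ with $\sigma_U:=\IE(U_1)$, I write $\widehat\sigma_Y-\sigma_Y = n^{-1}\sum_{j=1}^n X_j(U_j-\sigma_U)+\sigma_U(\widehat\sigma_X-\sigma_X)$. The second summand is $\sigma(X)$-measurable while the first has vanishing conditional expectation given $(X_j)$, hence the two are orthogonal in $\IL^2(\IR_+,x^0)$; invoking that the $U_j$ are i.i.d. and independent of $(X_j)$ then gives
\begin{align*}
\Var(\widehat\sigma_Y)= \frac{\Var(U_1)\,\IE(X_1^2)}{n}+\sigma_U^2\,\Var(\widehat\sigma_X).
\end{align*}
Bounding $\Var(U_1)\IE(X_1^2)\leq \IE(U_1^2)\IE(X_1^2)=\IE(Y_1^2)$ and inserting $\sigma_Y=\sigma_X\sigma_U$, so that $\sigma_U^2/\sigma_Y=\sigma_U/\sigma_X$, delivers precisely
\begin{align*}
\IE\big((\mathrm{pen}(\widehat k)-\widehat{\mathrm{pen}}(\widehat k))_+\big)\leq 4\chi\frac{\IE(Y_1^2)}{\sigma_Y n}+4\chi\frac{\sigma_U}{\sigma_X}\Var(\widehat\sigma_X).
\end{align*}
The one genuinely delicate point is the opening step: since $\widehat k$ is data-driven, expectation and the selection of the cut-off cannot be interchanged, and the whole argument hinges on recognising that $\sigma_Y-2\widehat\sigma_Y$ carries no $k$-dependence, so the implicit supremum over $\mathcal K_n$ collapses to the deterministic bound $\Delta_g(\widehat k)\leq n$. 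Everything after that is a second-moment computation that cleanly separates the measurement-error fluctuation (of order $n^{-1}$) from the dependence-sensitive fluctuation of $\widehat\sigma_X$, the latter being handled in the respective dependency regimes of Corollary \ref{thm:upper_bound_adap}.
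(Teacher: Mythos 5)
Your proof is correct and follows the paper's skeleton — strip out the data-driven $\widehat k$ via $\chi\Delta_g(\widehat k)n^{-1}\leq\chi$, reduce everything to $\Var(\widehat\sigma_Y)$, then decompose that variance by conditioning on $(X_j)$ — but the middle step is handled by a genuinely different and more elementary device. The paper writes $\IE((\mathrm{pen}(\widehat k)-\widehat{\mathrm{pen}}(\widehat k))_+)\leq 2\chi\,\IE((\sigma_Y/2-\widehat\sigma_Y)_+)$, introduces the event $\Omega_Y=\{|\widehat\sigma_Y-\sigma_Y|\leq\sigma_Y/2\}$ on which the positive part vanishes, and then combines the Cauchy--Schwarz inequality with the Chebyshev bound $\IP(\Omega_Y^c)\leq 4\Var(\widehat\sigma_Y)\sigma_Y^{-2}$ to obtain $2\chi\sqrt{\Var(\widehat\sigma_Y)\,\IP(\Omega_Y^c)}\leq 4\chi\Var(\widehat\sigma_Y)/\sigma_Y$. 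Your pointwise inequality $(2z-a)_+\leq 4z^2/a$ reaches the identical intermediate bound $4\chi\Var(\widehat\sigma_Y)/\sigma_Y$ with the same constant, purely deterministically and without the event splitting — arguably a cleaner route, and one that would transfer unchanged to settings where the Chebyshev step is awkward. For the final step the paper merely says "in analogy to the proof of Theorem \ref{thm:upp_bound}" and states the inequality $\Var(\widehat\sigma_Y)\leq \IE(Y_1^2)/n+\IE(U_1)^2\Var(\widehat\sigma_X)$; you make this explicit as an exact orthogonal decomposition $\widehat\sigma_Y-\sigma_Y=n^{-1}\sum_{j=1}^n X_j(U_j-\sigma_U)+\sigma_U(\widehat\sigma_X-\sigma_X)$, which is the same content with the bookkeeping spelled out (and your $\Var(U_1)\IE(X_1^2)\leq\IE(Y_1^2)$ recovers the paper's bound). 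One cosmetic remark: the paper's definition of $\mathcal K_n$ literally reads $\Delta_g(k)\leq n^{-1}$, apparently a typo for $\Delta_g(k)\leq n$; under either reading your opening bound $\chi\Delta_g(\widehat k)n^{-1}\leq\chi$ — which the paper also uses implicitly — is valid, so nothing in your argument is affected.
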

	Applying the lemmata and using the fact that $\IE(\widehat{\mathrm{pen}}(k))=2\mathrm{pen}(k)$,
	\begin{align*}
	\IE (	\| S- \widehat S_{\widehat k} \|^2) \leq 6\big(\|S-S_k\|^2 +\mathrm{pen}(k) \big) + \frac{C(g,\chi,f)}{n} +C(g,\chi)\frac{\Var(\widehat \sigma_X)}{\sigma_X} + \frac{1}{2\pi}\int_{-n}^n \Var(\sM_{1/2}[\widehat S_X](t))dt.
	\end{align*}
	Since this inequality holds for all $k\in \mathcal K_n$, it implies the claim.
\end{proof}
	 \paragraph{Proof of the Lemmata}
\begin{proof}[Proof of Lemma \ref{dd:lem:1}]
	By applying the Cauchy-Schwarz inequality we get for any $k\in \mathcal K_n$ and any $h\in B_k$,
	\begin{align*}
	    n^{-1} \sum_{j=1}^n \IE_{|X}(\nu_h(Y_j))-\IE(\nu_h(Y_j))&= \frac{1}{2\pi} \int_{-k}^k \left(n^{-1} \sum_{j=1}^n \IE_{|X}(Y_j^{1/2+it})- \IE(Y_j^{1/2+it})\right) \frac{\sM_{1/2}[h](-t)}{(1/2+it)\sM_{3/2}[g](t)} dt\\
	    &= \frac{1}{2\pi} \int_{-k}^k \left(n^{-1} \sum_{j=1}^n X_j^{1/2+it}- \IE(X_j^{1/2+it})\right) \frac{\sM_{1/2}[h](-t)}{(1/2+it)} dt\\
	    &\leq  \left(\frac{1}{2\pi}\int_{-k}^k \frac{|n^{-1} \sum_{j=1}^n X_j^{1/2+it}- \IE(X_j^{1/2+it})|^2}{1/4+t^2} dt\right)^{1/2} \|h\|_{\IL^2(\IR_+,x^0)}.
	\end{align*}
	Now since $\|h\|_{\IL^2(\IR_+,x^0)}\leq 1,$ $\IE( \max_{k\in \mathcal K_n}\sup_{h\in B_k} \bar{\nu}_{h,de}^2) \leq  \frac{1}{2\pi}\int_{-n}^{n} \Var(\sM_{1/2}[\widehat S_X](t))dt$. 
	\end{proof}
	\begin{proof}[Proof of Lemma \ref{dd:lem:2}]
	First let us define $\widetilde p(k):= 24\sigma_U\widehat\sigma_X \Delta_g(k)n^{-1}$. Then, 
	\begin{align*}
    \max_{k\in \mathcal K_n}(\sup_{h\in B_k} \overline\nu_{h, in}^2-\frac{1}{2}p(k))_+ =\max_{k\in \mathcal K_n}(\sup_{h\in B_k} \overline\nu_{h, in}^2-\frac{1}{2}\widetilde p(k))_+ +\frac{1}{2}\max_{k\in \mathcal K_n} (\widetilde p(k)-p(k))_+.
\end{align*}

For the second summand we have
\begin{align*}
    \IE(\max_{k\in \mathcal K_n} (\widetilde p(k)-p(k))_+) \leq 24\sigma_U \IE((\widehat \sigma_X-\sigma_X)_+).
\end{align*}
Let us define $\Omega_X:=\{|\widehat\sigma_X-\sigma_X|\leq \sigma_X/2\}$. Then on $\Omega_X$ we have $\widehat\sigma_X \leq 3\sigma_X/2$ and thus $\IE((\widehat\sigma_X-\sigma_X)_+) =\IE((\widehat\sigma_X-\sigma_X)_+\Ii_{\Omega_X^c}) \leq 2 \sigma_X^{-1}\Var(\widehat\sigma_X)  $ by application of Cauchy-Schwarz and the Markov's inequality.

For the first summand we see
	\begin{align*}
	   \IE(\max_{k\in \mathcal K_n}(\sup_{h\in B_k} \bar{\nu}_{h,in}^2-\frac{1}{2}\widetilde p(k))_+) =\IE(\IE_{|X}(\max_{k\in \mathcal K_n}(\sup_{h\in B_k} \bar{\nu}_{h,in}^2-\frac{1}{2}\widetilde p(k))_+)).
	\end{align*}
	Thus we start by considering the inner conditional expectation to bound the term. By the construction of $\bar\nu_{h,in}$, its summands conditioned on $\sigma(X_i,i\geq 0)$ are independent but not identically distributed. We therefore split the process again in the following way
	\begin{align*}\bar\nu_{h,1}:= &n^{-1} \sum_{j=1}^n \nu_h(Y_j) \Ii_{(0,c_n)}(Y_j^{1/2})-\IE_{|X}(\nu_h(Y_1) \Ii_{(0,c_n)}(Y_1^{1/2})) \\
	&\text{ and }\bar\nu_{h,2}:=  n^{-1} \sum_{j=1}^n \nu_h(Y_j) \Ii_{(c_n,\infty)}(Y_j^{1/2})-\IE_{|X}(\nu_h(Y_1) \Ii_{(c_n,\infty)}(Y_1^{1/2}))\end{align*}
	to get
	\begin{align*}
	    \IE_{|X}(\max_{k\in \mathcal K_n}(\sup_{h\in B_k} |\bar{\nu}_{h,in}|^2-\frac{1}{2}\widetilde p(k))_+)&\leq 2\IE_{|X}(\max_{k\in \mathcal K_n} (\sup_{h\in B_k} |\bar{\nu}_{h,1}|^2- \frac{1}{4}\widetilde p(k))_+) +2\IE_{|X}(\max_{k\in \mathcal K_n}\sup_{h\in B_k} |\bar\nu_{h, 2}|^2), \\
	    &:= M_1+M_2
	\end{align*}
	where we will now consider the two summands $M_1, M_2$ separately.\\
	To bound the $M_1$ term we will use the Talagrand inequality \reff{tal:re3} 
	on the term $\IE_{|X}(\sup_{t\in B_k} |\bar{\nu}_{h,1}|^2-\frac{1}{4}\widetilde p(k))_+$. Indeed, we have
	\begin{align*}
	   M_1 \leq \sum_{k=1}^{K_n}\IE_{|X}(\sup_{t\in B_k} |\bar{\nu}_{h,1}|^2-\frac{1}{4}\widetilde p(k))_+,
	\end{align*}
	which will be used to show the claim. We want to emphasize that we are able to apply the Talagrand inequality on the sets $B_k$ since $B_k$ has a dense countable subset and due to continuity arguments.
	Further, we see that the random variables $\nu_h(Y_j)\Ii_{(0,c_n)}(Y_j^{1/2})- \IE_{|X}(\nu_h(Y_1)\Ii_{(0,c_n)}(Y_1^{1/2}))$, $j=1, \dots, n$, are conditioned on $\sigma(X_i,i\geq 0)$, centered and independent but not identically distributed. 
	In order to apply Talagrand's inequality, we need to find the constants $\Psi, \psi, \tau$ such that
	\begin{align*}
	    \sup_{h\in B_k} \sup_{y>0} |\nu_h(y)\Ii_{(0,c_n)}(y^{1/2})|\leq \psi; \quad \IE_{|X}(\sup_{h\in B_k} |\bar{\nu}_{h,1}|)\leq \Psi; \quad \sup_{h\in B_k} \frac{1}{n} \sum_{j=1}^n \Var_{|X}(\nu_h(Y_j)\Ii_{(0,c_n)}(Y_j^{1/2})) \leq \tau.
	\end{align*}
	We start to determine the constant $\Psi^2$. Let us define $\widetilde M(t):= n^{-1} \sum_{j=1}^n Y_j^{1/2+it}\Ii_{(0,c_n)}(Y_j^{1/2}))$ as an unbiased estimator of $\sM_{3/2}[f_Y\Ii_{(0,c_n)}](t)$ and 
	\begin{align*}
	\widetilde S_k(x):= \frac{1}{2\pi} \int_{-k}^k x^{-1/2-it}\frac{\widetilde {\mathcal M}(t)}{(1/2+it)\sM_{3/2}[g](t)}dt 
	\end{align*}
	where $n^{-1} \sum_{j=1}^n \nu_h(Y_j) \Ii_{(0,c_n)}(Y_i)=\langle \widetilde S_k, h \rangle.$
	Thus, we have for any $h\in B_{k}$ that $\bar \nu_{h,1}^2 = \langle h, \widetilde S_{k} - \IE_{|X}(\widetilde S_{k}) \rangle^2\leq \| h\|^2 \|\widetilde S_{k}- \IE_{|X}(\widetilde S_{k})\|^2$. Since $\|h\|\leq1$, we get 
	\begin{align*}
	\IE_{|X}( \sup_{h\in B_{k}} \bar\nu_{h,1}^2 ) \leq \IE_{|X}(\|\widetilde S_{k} - \IE_{|X}(\widetilde S_{k})\|^2) =\frac{1}{2\pi} \int_{-k}^k \frac{\IE_{|X}(|\widetilde \sM(t)- \IE_{|X}(\widetilde \sM(t))|^2)}{(1/4+t^2)|\sM_{3/2}[g](t)|^2} dt.
	\end{align*}
	Now since $Y_j^{1/2+it}\Ii_{(0,c_n)}(Y_j^{1/2})-\IE_{|X}(Y_j^{1/2+it}\Ii_{(0,c_n)}(Y_j^{1/2})$ are independent conditioned on $\sigma(X_i:i\geq 0)$ we obtain
	\begin{align*}
	    \IE_{|X}(|\widetilde \sM(t)- \IE_{|X}(\widetilde \sM(t))|^2) \leq  \frac{1}{n^2} \sum_{j=1}^n \IE_{|X}( Y_j \Ii_{(0, c_n)}(Y_j^{1/2}))=\frac{\sigma_U}{n} \widehat\sigma_X,
	\end{align*}
    which implies
	\begin{align*}
	    \IE( \sup_{h\in B_{k}} \bar\nu_{h,1}^2 ) \leq \sigma_U\widehat\sigma_X \frac{\Delta_g(k)}{n}=:\Psi^2.
	\end{align*}
	Thus $6\Psi^2 = \frac{1}{4}\widetilde p(k)$.\\
	Next we consider $\psi$. Let $y>0$ and $h\in B_{k}$. Then using the Cauchy-Schwarz inequality, $|\nu_h(y)\Ii_{(0,c_n)}(y)|^2= (2\pi)^{-2} c_n^2|\int_{-k}^{k} y^{it} \frac{\sM_{1/2}[h](-t)}{(1/2+it)\sM_{3/2}[g](t)} dt |^2 \leq (2\pi)^{-1} c_n^2\int_{-k}^{k}  |(1/2+it)\sM_{3/2}[g](t)|^{-2}dt\leq  c_n^2\Delta_g(k)=: \psi^2$ since $|y^{it}|=1$ for all $t\in \IR$.\\
	Next we consider $\tau$. In fact for $h\in B_{k}$ we can conclude
	\begin{align*}
	    \Var_{|X}&(\nu_h(Y_j)\Ii_{(0,c_n)}(Y_j^{1/2})) \leq \IE_{|X}(\nu_h(Y_j)^2) \\
	    &=\frac{1}{4\pi^2} \int_{-k}^{k}\int_{-k}^k \IE_{|X}(Y_j^{1+i(t_1-t_2)}) \frac{\sM_{1/2}[h](-t_1)}{(1/2+it_1) \sM_{3/2}[g](t_1)}\frac{ \sM_{1/2}[h](t_2)}{(1/2-it_2)\sM_{3/2}[g](-t_2)} dt_1 dt_2
	    \\
	    &=\frac{1}{4\pi^2} \int_{-k}^{k}\int_{-k}^k X_j^{1+i(t_1-t_2)} \IE_g(U_1^{1+i(t_1-t_2)})
	    \frac{\sM_{1/2}[h](-t_1)}{(1/2+it_1) \sM_{3/2}[g](t_1)}\frac{ \sM_{1/2}[h](t_2)}{(1/2-it_2)\sM_{3/2}[g](-t_2)} dt_1 dt_2\\
	    &=  X_j \int_0^{\infty} g(u)u \left|\sM_{1/2}^{-1}[\Ii_{[-k,k]}(t) \frac{\sM_{1/2}[h](t)}{(1/2-it) \sM_{3/2}[g](-t)}](u)\right|^2du.
	    \end{align*}
Taking the supremum of $u\mapsto ug(u)$ and applying Plancherel's theorem delivers
\begin{align*}
     \Var_{|X}(\nu_h(Y_j)\Ii_{(0,c_n)}(Y_j^{1/2})) \leq X_j \|xg\|_{\infty} \frac{1}{2\pi} \int_{-k}^k  \frac{|\sM_{1/2}[h](t)|^2}{|(1/2+it)\sM_{3/2}[g](t)|^2}dt.
\end{align*}
	Now since $\|h\|^2\leq 1$, and for $G_k(t):= \Ii_{[-k,k]}(t) |(1/2+it)\sM_{3/2}[g](t)|^{-2}$,
	\begin{align*}
	\sup_{h\in B_k} \frac{1}{n} \sum_{j=1}^n \Var_{|X}(\nu_h(Y_j)\Ii_{(0,c_n)}(Y_j^{1/2})) \leq \widehat \sigma_X \|G_k\|_{\infty} \|xg\|_{\infty} =:\tau.
	\end{align*}
	Hence, we have $\frac{n\Psi^2}{6\tau}= \frac{\sigma_U\Delta_g(k)}{6 \|xg\|_{\infty} \|G_k\|_{\infty} }$ and  $\frac{n\Psi}{100\psi}=\frac{\sqrt{\sigma_U\widehat\sigma_Xn}}{100c_n}$. Now, choosing $c_n:=\frac{\sqrt{\sigma_U\widehat\sigma_X n}}{a 100 \log(n)}$ gives $\frac{n\Psi}{100\psi}=a\log(n)$, and we deduce
	\begin{align*}
	\hspace*{-1cm}\IE_{|X}\big(\sup_{h \in B_{ k} }\bar \nu_{h,1}^2 -\frac{1}{4}\widetilde p(k) \big)_+  \leq \frac{C}{n} &\left( \widehat\sigma_X\|G_k\|_{\infty}\|xg\|_{\infty} \exp(-\frac{\pi\sigma_U\Delta_g(k)}{3\|xg\|_{\infty} \|G_k\|_{\infty} })\right.\\ 
	&\left.+  \frac{\sigma_U\widehat\sigma_X\Delta_g(k)}{\log(n)^2} n^{-a}\right).
	\end{align*}
	 Under \textbf{[G1]} we have $C_gk^{2\gamma-1}\geq \Delta_g(k) \geq c_g k^{2\gamma-1}$ and for all $t\in \IR$ it holds true that $c_g k^{2\gamma-2}\leq|G_k(t)| \leq C_g k^{2\gamma-2}$. Hence,
	\begin{align*}
	  \sum_{k=1}^{K_n} \IE_{|X}\big(\sup_{h \in B_{ k} }\bar \nu_{h,1}^2 -\frac{1}{4}\widetilde p(k) \big)_+  \leq \frac{C\widehat \sigma_X}{n} \left(\sum_{k=1}^{K_n} C(g) k^{2\gamma-1} \exp(-C(g)k) + \sum_{k=1}^{K_n} C(g)\frac{k^{2\gamma-1}}{\log(n)^2 n^{a}}\right)
	\end{align*}
	where the first sum is bounded in $n\in \IN$. The second sum can be bounded by $C(g)n^{\frac{2\gamma}{2\gamma-1}-a}/\log(n)^2$ which by choosing $a=\frac{2\gamma}{2\gamma-1}$ ensures the boundedness in $n\in\IN$. Thus we have
	\begin{align*}
	   \sum_{k=1}^{K_n} \IE_{|X}\big(\sup_{h \in B_{ k} }\bar \nu_{h,1}^2 -\frac{1}{4}\widetilde p(k) \big)_+ \leq \frac{C(g)\widehat\sigma_X}{n}.
	\end{align*}
	Now, we consider $M_2$. Let us define $\overline S_k:= \widehat S_k-\widetilde S_k$. Then from $\nu_{h,2}= \nu_{h,in}- \nu_{h,1}$ we deduce $\nu_{h,2}^2 =\langle \overline{S}_k-\IE_{|X}(\overline S_k), h\rangle^2\leq \| \overline{S_k}- \IE_{|X}(\overline S_k) \|^2$ for any $h\in B_k$. Further, for any $k\in\mathcal K_n$, $\| \overline{S_k}- \IE_{|X}(\overline S_k) \|^2 \leq \| \overline{S}_{K_n}- \IE_{|X}(\overline S_{K_n}) \|^2$ and
	\begin{align*}
	\IE_{|X}(\| \overline{S}_{K_n}- \IE_{|X}(\overline S_{K_n})) \|^2) &= \frac{1}{2\pi} \int_{-K_n}^{K_n} \Var_{|X}(\widehat{\mathcal M}(t)- \widetilde{\mathcal M}(t)) |(1/2+it) \sM_{3/2}[g](t)|^{-2} dt \\
	&\leq\frac{1}{ n^2} \sum_{j=1}^n \IE_{|X}(Y_j \Ii_{(c_n, \infty)}(Y_j^{1/2})) \Delta_g(K_n). 
	\end{align*}
	Let us define the event $\Xi_X:=\{\widehat\sigma_X\geq \sigma_X/2\}$. Then, we have
	\begin{align*}
	    \frac{1}{ n^2} \sum_{j=1}^n \IE_{|X}(Y_j \Ii_{(c_n, \infty)}(Y_j^{1/2})) \Delta_g(K_n) \Ii_{\Xi_X} \leq  \frac{1}{ n} \sum_{j=1}^n X_j^{1+p/2} \IE(U_j^{1+p/2}) c_n^{-p} \Ii_{\Xi_X}
	\end{align*}
	where on $\Xi_X$ we can state that $c_n^{-p} = C(g)n^{-p/2} (\widehat\sigma_X)^{-p/2} \log(n)^{p} \leq C(g)\sigma_X^{-p/2} n^{-p/2}\log(n)^p $.
	Choosing $p=3$ leads to $\IE_{|X}(\| \overline{S}_{K_n}- \IE_{|X}(\overline S_{K_n})) \|^2) \Ii_{\Xi_X} \leq \frac{C(g) \sigma_X^{-3/2}}{n} \IE(U_1^{5/2}) n^{-1}\sum_{j=1}^n X_j^{5/2}$. On the other hand,
	\begin{align*}
	    \frac{1}{ n^2} \sum_{j=1}^n \IE_{|X}(Y_j \Ii_{(c_n, \infty)}(Y_j^{1/2})) \Delta_g(K_n)  \Ii_{\Xi_X^c} \leq \frac{\sigma_X}{2} \Ii_{\Xi_X^c}\leq \frac{\sigma_X}{2} \Ii_{\Omega_X^c} .
	\end{align*}These three bounds imply
	\begin{align*}
	     \IE(\max_{k\in \mathcal K_n}(\sup_{h\in B_k} \overline\nu_{h, in}^2-\frac{1}{2}p(k))_+)\leq C(g)\big(\frac{\sigma_X}{2n} + \frac{\IE(X_1^{5/2})}{\sigma_X^{3/2} n} + \frac{2\Var(\widehat\sigma_X)}{\sigma_X}\big). 
	\end{align*}
\end{proof}
\begin{proof}[Proof of Lemma \ref{dd:lem:3}]
First we see that $\IE((\mathrm{pen}(\widehat k)-\widehat{\mathrm{pen}}(\widehat k))_{+})= 2\chi \IE((\sigma_Y/2-\widehat \sigma_Y)_+ \Delta_g(\widehat{k}) n^{-1}) \leq 2\chi \IE((\sigma_Y/2-\widehat \sigma_Y )_+)$. On $\Omega_Y:=\{|\sigma_Y-\widehat \sigma_Y|\leq \sigma_Y/2\}$ we have $\sigma_Y/2-\widehat \sigma_Y \leq 0$. Therefore,
	\begin{align*}
	\IE((\mathrm{pen}(\widehat k)-\widehat{\mathrm{pen}}(\widehat k))_{+}) \leq 2 \chi \IE((\sigma_Y/2- \widehat \sigma_Y)_+ \Ii_{\Omega^c}) \leq 2 \chi \sqrt{\Var_{f_Y}^n(\widehat\sigma_Y) \IP_{f_Y}(\Omega^c)}
	\end{align*}
	by applying the Cauchy-Schwarz inequality. Next, by Markov's inequality, $\IP[|\widehat \sigma_Y- \sigma_Y| \geq \sigma_Y/2] \leq 4\Var(\widehat \sigma_Y) \sigma_Y^{-2}$
	which implies $\IE((\mathrm{pen}(\widehat k)-\widehat{\mathrm{pen}}(\widehat k))_{+}) \leq 4\chi \Var(\widehat \sigma_Y) \sigma_Y^{-1} $. In analogy to the proof of Theorem \ref{thm:upp_bound} we get
	\begin{align*}
	     \Var(\widehat \sigma_Y) \leq \frac{\IE(Y_1^2)}{n} + \IE(U_1)^2\Var(\widehat{\sigma}_X).
	\end{align*}
\end{proof}

\begin{proof}[Proof of Corollary \ref{thm:upper_bound_adap}]
    We discuss each case separately. We already assessed the variance term in the integral in Corollary \ref{cor:consis_dep}. It remains to upper bound the variance of $\widehat\sigma_X$.
    \begin{enumerate}
        \item[\textbf{(I)}] Trivially, $\Var(n^{-1}\sum_{j=1}^n X_j) \le n^{-1}\IE(X_1^2)$.
        \item[\textbf{(B)}] Exploiting Lemma \ref{lem:beta_mix_var_bound} for the identity mapping $h = \mathrm{id}$, we have
        \[
            \Var(n^{-1}\sum_{j=1}^n X_j) \le n^{-2} \cdot 4n \IE(X_1^2b(X_1)) =  4 n^{-1} \IE(X_1^2b(X_1)).
        \]
        \item[\textbf{(F)}] Setting the function $g$ in Lemma \ref{lem:transfer_dependency} to be the identity mapping $g = \mathrm{id}$, we simply have
        \[
            \Var(\sum_{j=1}^n X_j)^{1/2} \le n^{1/2} \sum_{k=0}^\infty  \delta_2^{X}(k).
    \]
    \end{enumerate}
    Combined with the results of Corollary \ref{cor:consis_dep}, we derive our statement.
\end{proof}

\bibliographystyle{plain}
\bibliography{reference}

\begin{thebibliography}{10}

\bibitem{AndersenHansen2001}
Kim~E Andersen and Martin~B Hansen.
\newblock Multiplicative censoring: density estimation by a series expansion
  approach.
\newblock {\em Journal of Statistical Planning and Inference},
  98(1-2):137--155, 2001.

\bibitem{AsgharianWolfson2005}
Masoud Asgharian and David~B Wolfson.
\newblock Asymptotic behavior of the unconditional npmle of the length-biased
  survivor function from right censored prevalent cohort data.
\newblock {\em The Annals of Statistics}, 33(5):2109--2131, 2005.

\bibitem{asin}
Nicolas Asin and Jan Johannes.
\newblock Adaptive nonparametric estimation in the presence of dependence.
\newblock {\em Journal of Nonparametric Statistics}, 29(4):694--730, 2017.

\bibitem{BelomestnyComteGenon-Catalot2016}
Denis Belomestny, Fabienne Comte, and Valentine Genon-Catalot.
\newblock {Nonparametric Laguerre estimation in the multiplicative censoring
  model}.
\newblock {\em {Electronic Journal of Statistics}}, 10(2):3114--3152, 2016.

\bibitem{BelomestnyGoldenshluger2020}
Denis Belomestny and Alexander Goldenshluger.
\newblock Nonparametric density estimation from observations with
  multiplicative measurement errors.
\newblock In {\em Annales de l'Institut Henri Poincar{\'e}, Probabilit{\'e}s et
  Statistiques}, volume~56, pages 36--67. Institut Henri Poincar{\'e}, 2020.

\bibitem{borkar1993}
Vivek~S. Borkar.
\newblock White-noise representations in stochastic realization theory.
\newblock {\em SIAM J. Control Optim.}, 31(5):1093–1102, 1993.

\bibitem{bradley2005}
Richard~C. Bradley.
\newblock Basic properties of strong mixing conditions. a survey and some open
  questions.
\newblock {\em Probab. Surveys}, 2:107--144, 2005.

\bibitem{miguel2021anisotropic}
Sergio Brenner~Miguel.
\newblock Anisotropic spectral cut-off estimation under multiplicative
  measurement errors.
\newblock {\em arXiv preprint arXiv:2107.02120}, 2021.

\bibitem{Brenner-MiguelComteJohannes2020}
Sergio Brenner~Miguel, Fabienne Comte, and Jan Johannes.
\newblock Spectral cut-off regularisation for density estimation under
  multiplicative measurement errors.
\newblock {\em arXiv preprint arXiv:2009.10547}, 2020.

\bibitem{brockwell}
Peter~J. Brockwell and Richard~A. Davis.
\newblock {\em Introduction to time series and forecasting}.
\newblock Springer Texts in Statistics. Springer, [Cham], third edition, 2016.

\bibitem{BrunelComteGenon-Catalot2016}
Elodie Brunel, Fabienne Comte, and Valentine Genon-Catalot.
\newblock Nonparametric density and survival function estimation in the
  multiplicative censoring model.
\newblock {\em Test}, 25(3):570--590, 2016.

\bibitem{chow_teicher}
Yuan~Shih Chow and Henry Teicher.
\newblock {\em Probability theory}.
\newblock Springer Texts in Statistics. Springer-Verlag, New York, second
  edition, 1988.
\newblock Independence, interchangeability, martingales.

\bibitem{ComteDedeckerTaupin2008}
F.~Comte, J.~Dedecker, and M.~L. Taupin.
\newblock Adaptive density estimation for general {ARCH} models.
\newblock {\em Econometric Theory}, 24(6):1628--1662, 2008.

\bibitem{ComteDion2016}
Fabienne Comte and Charlotte Dion.
\newblock Nonparametric estimation in a multiplicative censoring model with
  symmetric noise.
\newblock {\em Journal of Nonparametric Statistics}, 28(4):768--801, 2016.

\bibitem{rio1995invariance}
P.~Doukhan, P.~Massart, and E.~Rio.
\newblock Invariance principles for absolutely regular empirical processes.
\newblock {\em Ann. Inst. H. Poincar\'{e} Probab. Statist.}, 31(2):393--427,
  1995.

\bibitem{doukhan_mixingbook}
Paul Doukhan.
\newblock {\em Mixing}, volume~85 of {\em Lecture Notes in Statistics}.
\newblock Springer-Verlag, New York, 1994.
\newblock Properties and examples.

\bibitem{EnglHanke-BourgeoisNeubauer2000}
Heinz~W. Engl, Martin Hanke-Bourgeois, and Andreas Neubauer.
\newblock {\em Regularization of inverse problems}.
\newblock Kluwer Acad. Publ., 2000.

\bibitem{Fan1991}
Jianqing Fan.
\newblock On the optimal rates of convergence for nonparametric deconvolution
  problems.
\newblock {\em The Annals of Statistics}, pages 1257--1272, 1991.

\bibitem{fryzlewicz2011}
Piotr Fryzlewicz and Suhasini Subba~Rao.
\newblock Mixing properties of arch and time-varying arch processes.
\newblock {\em Bernoulli}, 17(1):320--346, 02 2011.

\bibitem{gaver_lewis}
D.~P. Gaver and P.~A.~W. Lewis.
\newblock First-order autoregressive gamma sequences and point processes.
\newblock {\em Adv. in Appl. Probab.}, 12(3):727--745, 1980.

\bibitem{KleinRio2005}
T.~Klein and E.~Rio.
\newblock Concentration around the mean for maxima of empirical processes.
\newblock {\em The Annals of Probability}, 33(3):1060--1077, 2005.

\bibitem{Meister2009}
Alexander Meister.
\newblock Density deconvolution.
\newblock In {\em Deconvolution Problems in Nonparametric Statistics}, pages
  5--105. Springer, 2009.

\bibitem{ParisKaminski2001}
Richard~B Paris and David Kaminski.
\newblock {\em Asymptotics and mellin-barnes integrals}, volume~85.
\newblock Cambridge University Press, 2001.

\bibitem{rio2013}
Emmanuel Rio.
\newblock {Inequalities and limit theorems for weakly dependent sequences}.
\newblock Lecture, September 2013.

\bibitem{Talagrand1996}
M.~Talagrand.
\newblock New concentration inequalities in product spaces.
\newblock {\em Inventiones mathematicae}, 126:505--563, 1996.

\bibitem{Tsybakov2008}
Alexandre~B. Tsybakov.
\newblock {\em Introduction to nonparametric estimation}.
\newblock Springer Publishing Company, Incorporated, 2008.

\bibitem{Van-EsKlaassenOudshoorn2000}
Bert Van~Es, Chris~AJ Klaassen, and Karin Oudshoorn.
\newblock Survival analysis under cross-sectional sampling: length bias and
  multiplicative censoring.
\newblock {\em Journal of Statistical Planning and Inference}, 91(2):295--312,
  2000.

\bibitem{VardiZhang1992}
Y~Vardi and Cun-Hui Zhang.
\newblock Large sample study of empirical distributions in a
  random-multiplicative censoring model.
\newblock {\em The Annals of Statistics}, pages 1022--1039, 1992.

\bibitem{Vardi1989}
Yehuda Vardi.
\newblock Multiplicative censoring, renewal processes, deconvolution and
  decreasing density: nonparametric estimation.
\newblock {\em Biometrika}, 76(4):751--761, 1989.

\bibitem{viennet}
Gabrielle Viennet.
\newblock Inequalities for absolutely regular sequences: application to density
  estimation.
\newblock {\em Probab. Theory Related Fields}, 107(4):467--492, 1997.

\bibitem{wu2005anotherlook}
Wei~Biao Wu.
\newblock Nonlinear system theory: another look at dependence.
\newblock {\em Proc. Natl. Acad. Sci. USA}, 102(40):14150--14154, 2005.

\end{thebibliography}

\end{document}